\documentclass[a4paper]{article}

\usepackage{amsmath,amssymb,amsbsy}
  \usepackage{paralist}
  \usepackage{graphics} 
  \usepackage{epsfig} 
\usepackage{amsthm}
\usepackage{tikz}
\usepackage[all]{xy}
 \usepackage{verbatim}




\newtheorem{theorem}{Theorem}[section]
\newtheorem{proposition}[theorem]{Proposition}
\newtheorem{corollary}[theorem]{Corollary}
\newtheorem{lemma}[theorem]{Lemma}
\newtheorem{main lemma}[theorem]{Main Lemma}


{\theoremstyle{definition}
\newtheorem{definition}{Definition}

\newtheorem{remark}{Remark}

\newcommand{\ddfrac}[2] {\frac{\displaystyle #1 }{\displaystyle #2} }

\newcommand{\R}{\mathbb{R}}

\newcommand{\N}{\mathbb{N}}

\def\beq{\begin{equation}}
\def\eeq{\end{equation}}
\def\pa{\partial}

\def\d{\delta}

\def\wt{\widetilde}
\def\wh{\widehat}
\def\f{\varphi}
\def\a{\alpha}

\def\eps{\varepsilon}

\DeclareMathOperator{\Ker}{Ker}
\DeclareMathOperator{\im}{Range}
\DeclareMathOperator{\loc}{loc}
\title{Bounded solutions for a forced bounded oscillator without friction\footnote{
Work partially supported by the PRIN2009 grant ``Critical Point Theory and
Perturbative Methods for Nonlinear Differential Equations''.\break
2010 \emph{AMS Subject Classification.} Primary: 34B15; secondary: 34C11, 49J35 \hfill\break
\emph{Key words.} Landesman-Lazer conditions, critical point theory, subharmonic solutions to periodic ODEs, Ambrosetti-Prodi problems.
}
}

\author{Nicola Soave and Gianmaria Verzini}

\date{June 4, 2013}

\begin{document}

\maketitle

\begin{abstract}
Under the validity of a Landesman-Lazer type condition, we prove the existence of solutions bounded on the real line,
together with their first derivatives, for some second order nonlinear differential equation of the form $\ddot u + g(u) = p(t)$, where the reaction term $g$ is bounded. The proof is variational, and relies on a dual version of the Nehari method for the existence of oscillating solutions to superlinear equations.
\end{abstract}

\section{Introduction}\label{sec:intro}

This paper concerns the existence of solutions, bounded on the real line together with their first derivative, for the differential equation
\beq\label{equation}
\ddot{u}+g(u)=p(t),
\eeq
where $g\in\mathcal{C}^2(\R)$ is bounded, increasing, and has exactly one inflection point,
and $p\in \mathcal{C}(\R)\cap L^\infty(\R)$ admits asymptotic average $A(p)\in\R$, that is
\[
\lim_{T\to+\infty} \frac{1}{T} \int_t^{t+T} p(s)\,ds = A(p),
\]
uniformly in $t\in\R$. Such an equation describes the forced motions of an oscillator
exhibiting saturation effects. As a model problem, the reader may think to the equation
\[
\ddot{u}+\arctan u=p(t),
\]
even though we do not require any symmetry assumption on the reaction term $g$.
Under the above assumption, the main result we prove is the following theorem.
\begin{theorem}\label{thm: intro}
Equation \eqref{equation} admits a bounded solution if and only if
\begin{equation}\label{eq:land_laz}
g(-\infty)<A(p)<g(+\infty).
\end{equation}
In such a case, equation \eqref{equation} admits a countable set of bounded solutions, having
arbitrarily large $L^\infty$-norm.
\end{theorem}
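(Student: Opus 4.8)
\medskip
\noindent\textbf{Necessity of \eqref{eq:land_laz}.} Let $u$ be a bounded solution; then $\dot u$ is bounded as well, since $\ddot u=p-g(u)$ is bounded. Integrating \eqref{equation} on $[t,t+T]$ gives $\dot u(t+T)-\dot u(t)=\int_t^{t+T}\big(p(s)-g(u(s))\big)\,ds$; dividing by $T$ and letting $T\to+\infty$, the left-hand side vanishes because $\dot u$ is bounded, so $s\mapsto g(u(s))$ has asymptotic average $A(p)$. As $u$ is bounded, say $|u|\le M$, and $g$ is strictly increasing, $g(-\infty)<g(-M)\le g(u(s))\le g(M)<g(+\infty)$ for every $s$, hence $g(-\infty)<A(p)<g(+\infty)$.

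\medskip
\noindent\textbf{Sufficiency: a dual variational scheme on growing domains.} Assume \eqref{eq:land_laz}. Since the energy has no compactness on the whole line, I would produce solutions on the intervals $[-n,n]$ --- with Neumann conditions, which are the natural ones for the action $J_n(u)=\int_{-n}^{n}\big(\tfrac12\dot u^{2}-G(u)+p\,u\big)\,dt$, $G'=g$ --- and then let $n\to+\infty$; when $p$ is periodic one may instead work with $kT$-periodic functions and look for subharmonics. The functional $J_n$ is strongly indefinite: it is unbounded above (fast oscillations) and, by \eqref{eq:land_laz}, tends to $-\infty$ along large constants (because $G$ is convex with asymptotic slopes $g(\pm\infty)$). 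Following the dual Nehari philosophy, I would pass to the Fenchel conjugate $G^{*}$ of the convex, linearly growing primitive $G$ --- finite exactly on the bounded interval $(g(-\infty),g(+\infty))$, strictly convex, with $(G^{*})''=1/(g'\circ g^{-1})$ exhibiting a single minimum and blowing up at both endpoints because $g$ has a single inflection point --- and rewrite \eqref{equation} as $-\ddot u=v$, $u=(G^{*})'(v+p)$. This yields a dual functional in $v$, still indefinite, whose associated dual Nehari manifold (where the appropriate fibering of a configuration is stationary) is a smooth \emph{natural constraint} precisely because of the monotonicity supplied by the single-inflection hypothesis; a constrained minimax there produces a critical point, hence a solution of \eqref{equation} on $[-n,n]$, with its action bounded by the minimax level.

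\medskip
\noindent\textbf{Passing to the limit; solutions of arbitrarily large norm.} From the action bound and equation \eqref{equation} one extracts a priori estimates $\|u_n\|_{L^\infty(-n,n)}+\|\dot u_n\|_{L^\infty(-n,n)}\le C$ with $C$ independent of $n$ --- here the boundedness and monotonicity of $g$ together with the strict inequalities \eqref{eq:land_laz} are essential, since they forbid a solution of controlled action from drifting off to infinity. A diagonal Arzel\`{a}--Ascoli argument then gives a subsequence converging in $C^{2}_{\loc}(\R)$ to a bounded solution of \eqref{equation}, which proves existence. For the multiplicity statement I would run the same scheme with an internal parameter driving the amplitude up: prescribe the number $k$ of large excursions (the dual analogue of fixing the nodal count in the superlinear Nehari method) and choose $k=k_n\sim n/\sqrt M$, so that each excursion has amplitude of order $M$, uniformly in $n$. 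This balance is feasible because $g$ is bounded: on $\{|u|\ge M\}$ one has $\ddot u\approx p-g(\pm\infty)$, so a swing of size $M$ lasts a time of order $\sqrt M$; hence the solutions stay equibounded by a constant of order $M$, and their $C^{2}_{\loc}$-limit is a bounded solution of \eqref{equation} on $\R$ with $\|u\|_{L^\infty}\ge M$. (When $p$ is $T$-periodic, an even simpler route gives, for each large $k$, a $kT$-periodic subharmonic performing essentially one large swing per period, of amplitude $\sim (kT)^2$, which already settles the periodic case.) Taking $M=1,2,3,\dots$ and discarding possible coincidences one obtains a countable set of bounded solutions with unbounded $L^\infty$-norm.

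\medskip
\noindent\textbf{Expected main obstacle.} The crux is the variational analysis: the action is strongly indefinite and non-coercive, there is no compact embedding on $\R$, and the bounded, non-superquadratic nonlinearity forces a genuine dualization of the Nehari machinery. I expect the delicate points to be (i) proving that the dual Nehari manifold is an honest natural constraint --- so that constrained critical points solve \eqref{equation} --- which is where the single-inflection hypothesis should be used (monotonicity and uniqueness behind the fibering), and (ii) the uniform a priori estimates that must simultaneously prevent blow-up (for existence) and collapse of the excursions (for the large-norm family) as the domain grows, for which the strict Landesman--Lazer inequalities \eqref{eq:land_laz} are the decisive tool.
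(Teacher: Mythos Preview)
Your necessity argument is correct and coincides with the paper's (the interpolation step giving $\dot u\in L^\infty$ from $u,\ddot u\in L^\infty$ is fine, and in any case the paper takes boundedness of $\dot u$ as part of the notion of bounded solution).

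For sufficiency, however, your proposal diverges from the paper in a way that leaves a genuine gap. The phrase ``dual Nehari'' in this context does \emph{not} refer to Fenchel--Legendre duality; it refers to reversing the min--max order of the classical (superlinear) Nehari scheme. Concretely, the paper works with Dirichlet --- not Neumann --- conditions on subintervals, and observes that on the cone $H^1_0(a,b)^+$ the functional $J_{(a,b)}$ is \emph{coercive} (because $G(s)\le g_+ s$ for $s\ge0$), so the direct method yields a nonnegative minimizer $u_+(\cdot\,;a,b)$; the hard work is to show that for $b-a$ large this minimizer is strictly positive, unique, and nondegenerate (Sections~3--5). One then partitions a large interval $[A,B]$ into $k+1$ pieces, glues the alternating one-signed minimizers, and \emph{maximizes} the total action over the partition; the first-order condition at an interior maximizing partition forces the one-sided derivatives to match at the nodes (Proposition~\ref{prop: diff of phi}), producing a genuine $\mathcal C^2$ solution with exactly $k$ zeros. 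The amplitude control comes from the scaling $\widehat u(t)=(b-a)^{-2}u(a+t(b-a))$, which identifies the limit problem $\ddot w=-(g_+-A(p))$ and yields $\|u\|_\infty\sim (b-a)^2$ on each piece; the ratio $(B-A)/k$ then governs the size of the resulting bounded solution.

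Your Fenchel route, by contrast, is not carried out and faces a concrete obstruction you do not address: since no bound on $\|p\|_\infty$ is assumed, the constraint $v+p\in(g_-,g_+)=\mathrm{dom}\,G^*$ may be vacuous or extremely thin, so the ``dual functional in $v$'' need not even have a nonempty effective domain, let alone a smooth natural-constraint manifold. The claim that the single inflection of $g$ makes the dual Nehari manifold a natural constraint is asserted but not argued, and the uniform a~priori estimates ``from the action bound'' are precisely the content of the paper's Sections~3--4 (via the decomposition $p=p_{1,\eps}+\dot p_{2,\eps}$ of Lemma~\ref{lem: decomposition} and the scaled comparison with the limit parabola), not something one reads off an abstract minimax level. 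In short: the paper's ``duality'' is a min--then--max on sign cones and partitions, which gives both the existence and the nodal/amplitude structure for free; your Legendre--Clarke dualization would need an entirely new analysis to even get started, and the obstacle you flag in~(i) is real and unresolved.
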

The motivation for our investigation relies on the papers \cite{Ah,Or}, which in turn have been inspired by some classical results of Landesman-Lazer type holding in the periodic framework. Such studies concern the equation
\beq\label{equation_with_friction}
\ddot{u}+c\dot u+g(u)=p(t),
\eeq
where $c\in\R$ and the continuous function $g$, not necessarily monotone, admits limits at $\pm\infty$, with the property that
\[
g(-\infty)<g(s)<g(+\infty)
\]
for every $s$. Also the cases $g(\pm\infty)=\pm\infty$ can be considered, requiring $g$ to be sublinear at infinity if $c=0$. When $p$ is $T$-periodic, it is nowadays well known that equation \eqref{equation} admits a periodic solution if and only if the Landesman-Lazer condition
\[
g(-\infty)<\frac{1}{T} \int_0^{T} p(s)\,ds<g(+\infty)
\]
is satisfied, regardless of the constant $c$; this result was first proved by Lazer, using the Schauder fixed point theorem, see \cite{La}. When $p$ is merely bounded, one would like to find analogous conditions for the search of bounded solutions. This problem was first studied by Ahmad \cite{Ah}, under the assumption that $p$
has asymptotic average, in the sense explained above; by means of techniques of the qualitative theory of dissipative equations, the existence of a bounded solution is characterized, whenever $c\neq0$, by \eqref{eq:land_laz}. The case in which $p$ is an arbitrary continuous function was solved by Ortega \cite{Or}, who assumes $c\neq0$ and  provides a sharp necessary and sufficient condition: \eqref{equation_with_friction} has a bounded solution if and only if $p$ can be written as $p^*+p^{**}$, where $p^*$ has bounded primitive and $p^{**}$ assumes values strictly contained between $g(-\infty)$ and $g(+\infty)$. This result relies on the Krasnoselskii's method of guiding functions,
and was generalized by Ortega and Tineo \cite{OrTi} to equations of higher order, using the notions of lower and upper averages of $p$; again, the condition $c\neq0$ sticks as a crucial assumption. Later, by means of the method of lower and upper solutions, Mawhin and Ward \cite{MaWa} achieved some results in the case $c=0$, but in the complementary situation in which $g(-\infty)\geq g(+\infty)$. Up to our knowledge, this last is the unique extension of the Landesman-Lazer theory to second order equations without friction, and the question in the case $g(-\infty)<g(+\infty)$ is still open. Under this perspective, in this paper we go back to the setting originally considered by Ahmad, and we prove that its aforementioned result holds also in the case $c=0$, at least for the particular class of $g$ that we consider.

The proof of our result is variational: we use a dual Nehari method which was first introduced in \cite{OrVe} to obtain bounded solutions in the case of a sublinear reaction (i.e. $g(s)=s^{1/3}$). The method consists in two steps.

Firstly, we consider the boundary value problem
\beq\label{positive BVP}
\begin{cases}
\ddot{u} +g(u )=p(t) & t \in (a,b),\\
u(a)=0=u(b),\\
u(t)>0 & t \in (a,b),
\end{cases}
\end{equation}
searching for solutions as minimizers of the action functional
\[
J_{(a,b)}(u):= \int_a^b \left[ \frac{1}{2}\dot{u}^2(t)- G(u(t)) + p(t)u(t)\right]\,dt
\]
in the weakly closed set $\{u \in H_0^1(a,b): \ u \ge 0\}$. In Section \ref{sec: existence minimi} we obtain some general properties of the nonnegative minimizers of $J_{(a,b)}$ in any interval $(a,b)$; in Section \ref{sec: BVP large} we prove that, when $b-a$ is sufficiently large, the minimizer $u_+(\cdot\,;a,b)$ is unique and solves problem \eqref{positive BVP}. The proof of these results is substantially different from the corresponding one in the sublinear case \cite{OrVe}: indeed in the present situation the nonlinearity $g$ and the forcing term $p$ have the same order of growth (they are both bounded), while, as far as $b-a$ is sufficiently large, the forced sub-linear problem can be considered as a small perturbation of the unforced one. This fact introduces a lot of complications, which we can overcome thanks to a careful analysis of the balance between $g$ and $p$, via measure theory tools, and of the asymptotic properties of the functional $J_{(a,b)}$ as $b-a \to +\infty$. Of course, analogous results can be obtained for negative minimizers $u_-(\cdot\,;a,b)$. To proceed, it is necessary to prove that $u_\pm(\cdot\,;a,b)$ is non-degenerate and that $J_{(a,b)}(u_\pm(\cdot\,;a,b))$ is differentiable as a function of $(a,b)$. This is the object of Sections \ref{sec: non deg}, \ref{sec: diff}, and it is the only part which requires $g\in\mathcal{C}^2$. We believe that this assumption can be weakened by a suitable approximating procedure, but we prefer to avoid further technicalities at this point.

Once the existence of one-signed solutions is established, in Section \ref{sec: sign-change} we juxtapose positive and negative minimizers with alternate signs to obtain oscillating solutions. Indeed, let us fix $k \ge 1$, a bounded interval $[A,B]$ sufficiently large, and let us consider the class of partitions
\[
\mathcal{B}_k:= \left\{ (t_1,\dots,t_k) \in \R^k \left| \begin{array}{l} A=: t_0 \le t_1 \le \dots \le t_k \le t_{k+1}:=B,\\
t_{i+1}-t_i \text{ is sufficiently large for any $i$} \end{array} \right.\right\}.
\]
For each partition $P=(t_1,\dots,t_k)$ of $\mathcal{B}_k$ there is a function $u_P$ obtained by juxtaposing $u_{\pm}(\cdot\,;t_i,t_{i+1})$ with alternate signs $+$ and $-$. In general, this function is not a solution of equation \eqref{equation}, because the derivatives $\dot{u}_P(t_i^\pm)$ may not coincide. We prove that these corner points disappear for the partition maximizing the quantity
\[
\psi(P) = \sum_{i=0}^k J_{(t_i,t_{i+1})} (u_\pm(\cdot\,;t_i,t_{i+1})).
\]
This argument provides a solution of \eqref{equation} having $k$ zeros in $[A,B]$, together with some estimates which depend only on the ratio $(B-A)/k$. Therefore, taking $A \to -\infty$, $B \to +\infty$ and $k \to +\infty$ in an appropriate way, one can pass to the limit and obtain the desired bounded solution.
In doing this, one must again modify the corresponding arguments in the sub-linear case, indeed they do not allow to treat the non-symmetric case $g(+\infty)-A(p)\neq A(p)-g(-\infty)$.

Incidentally, assuming $p$ to be $T$-periodic, a simple variation of the argument above allows to obtain the existence of infinitely many subharmonic solutions, i.e. solutions which have minimal period $nT$, $n\in\N$ (see Theorem \ref{thm: esistenza sub} at the end of the paper).

To conclude, we remark that also the case of infinite limits $g(\pm\infty)$ can be treated by variational methods. On one hand, as already mentioned, infinitely many bounded solutions for equation \eqref{equation} were obtained in \cite{OrVe} when $g(s)=|s|^{q-1}s$, $0<q<1$, and $p\in L^{\infty}(\R)$. On the other hand, the original Nehari method, together with a limiting procedure, allows to obtain an analogous result also when $g$ is superlinear at infinity, as done in \cite{TeVe,Ve}.

\section{Preliminaries}

It is not difficult to check that if equation \eqref{equation} admits a bounded solution with bounded derivative, then necessarily condition \eqref{eq:land_laz} is satisfied. Indeed, by integrating equation \eqref{equation} in $(t,t+T)$, we obtain
\[
\frac{\dot{u}(t+T)-\dot{u}(t)}{T} = \frac{1}{T}\int_t^{t+T}  \left( p(s)- g(u(s)) \right) \, ds.
\]
Since $\dot{u}$ is bounded, passing to the limit as $T \to +\infty$ we deduce that the left hand side tends to $0$, so that
\begin{equation}\label{necessity, eq 1}
\begin{split}
0 & =\lim_{T \to +\infty} \frac{1}{T}\int_t^{t+T}  \left( p(s)- g(u(s)) \right) \, ds \\
&= A(p) - \lim_{T \to +\infty} \frac{1}{T} \int_t^{t+T} g(u(s))\,ds.
\end{split}
\end{equation}
Now, the boundedness of $u$ and the monotonicity of $g$ implies also that for every $s \in \R$
\begin{equation}\label{necessity, eq 2}
g(-\infty) < g\left(-\|u\|_\infty\right) \le g(u(s)) \le g\left(\|u\|_\infty\right) < g(+\infty),
\end{equation}
and a comparison between \eqref{necessity, eq 1} and \eqref{necessity, eq 2} gives the desired result
(in fact, from this point of view, it is sufficient that $g(-\infty)<g(s)<g(+\infty)$ for every $s$).

We observe that, by means of suitable translations, it is not restrictive to assume that
\begin{equation}\tag{h1}\label{h1}
\begin{split}
&\text{$g(0)=0$, $g \in \mathcal{C}^2(\R)$ is bounded, strictly increasing in $\R$,}\\
&\text{strictly concave in $(0,+\infty)$ and strictly convex in $(-\infty,0)$}.
\end{split}
\end{equation}
We denote as $G$ the primitive of $g$ vanishing in $0$, and
\[
\lim_{s \to \pm \infty} g(s) = g_\pm,
\]
so that
\[
\lim_{s \to \pm \infty} \frac{G(s)}{s}=g_\pm \quad \text{and} \quad g_- < \frac{G(s)}{s} < g_+ \quad \forall s \in \R.
\]
As far as the function $p$ is concerned, as we already mentioned, we assume that $p \in \mathcal{C}(\R) \cap L^\infty(\R)$ is such that for every $\eps>0$ there exists $\bar T>0$ such that if $T> \bar T$ then
\[
\sup_{t \in \R} \left| \frac{1}{T} \int_t^{t+T} p(s)\,ds - A(p) \right| < \eps,
\]
in such a way that
\begin{equation}\tag{h2}\label{h2}
\begin{split}
&\text{$p$ is bounded and continuous in $\R$}, \\
&\text{and has asymptotic average $g_- < A(p) < g_+$}.
\end{split}
\end{equation}
Note that we do not make any assumption on the $L^\infty$ norm of $p$.

In view of the previous considerations and notations, we can rephrase Theorem \ref{thm: intro} as follows.
\begin{theorem}\label{thm: main thm}
Under assumptions \eqref{h1}-\eqref{h2}, there exists a sequence $(u_m)$ of solutions of \eqref{equation} defined in $\R$, with $u_m,\dot u_m \in L^\infty(\R)$ and $\|u_m\|_\infty \to \infty$ as $m \to \infty$. Moreover, each $u_m$ has infinitely many zeros in $\R$.
\end{theorem}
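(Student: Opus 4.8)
The plan is to follow the two-step dual Nehari strategy described in the introduction, combining the one-signed boundary value problem analysis with a gluing/limiting argument. First I would establish, via the variational results announced for Sections~\ref{sec: existence minimi}--\ref{sec: diff}, that whenever $b-a$ is sufficiently large the action functional $J_{(a,b)}$ has a unique nonnegative minimizer $u_+(\cdot\,;a,b)$ solving \eqref{positive BVP}, a unique nonpositive minimizer $u_-(\cdot\,;a,b)$, that both are non-degenerate, and that the map $(a,b)\mapsto J_{(a,b)}(u_\pm(\cdot\,;a,b))$ is $\mathcal{C}^1$. The key quantitative point to extract here is an a priori bound on $\|u_\pm(\cdot\,;a,b)\|_\infty$ and on $|\dot u_\pm(a^+)|,|\dot u_\pm(b^-)|$ depending only on $b-a$, together with a lower bound on $\|u_\pm\|_\infty$ that \emph{grows} with $b-a$; this is where assumption \eqref{h1} (the single inflection point, concavity/convexity split at $0$) and the strict inequalities in \eqref{h2} are used to keep the balance between $G$ and $p$ under control even though $g$ and $p$ have the same (bounded) growth.

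Next, for fixed $k\ge 1$ and a large bounded interval $[A,B]$, I would consider the finite-dimensional maximization of
\[
\psi(P)=\sum_{i=0}^{k} J_{(t_i,t_{i+1})}\bigl(u_\pm(\cdot\,;t_i,t_{i+1})\bigr)
\]
over the compact-after-reduction set $\mathcal{B}_k$ of admissible partitions. Using the $\mathcal{C}^1$-dependence and non-degeneracy, I would show the maximizer $P^*$ lies in the interior (the constraint $t_{i+1}-t_i$ large is not active, by comparing $\psi$ with the value obtained by redistributing lengths, exploiting the asymptotic behaviour of $J_{(a,b)}$ as $b-a\to\infty$), and then compute $\partial\psi/\partial t_i=0$ to deduce the Neumann-type matching condition $\dot u_P(t_i^-)=\dot u_P(t_i^+)$ at every interior node. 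Hence $u_{P^*}$, the juxtaposition of alternately-signed minimizers, is a genuine $\mathcal{C}^1$ (indeed $\mathcal{C}^2$) solution of \eqref{equation} on $[A,B]$ with exactly $k$ interior zeros, and by the previous step it obeys estimates on $\|u_{P^*}\|_\infty$, $\|\dot u_{P^*}\|_\infty$ and on the minimal/maximal gap between consecutive zeros that depend only on the ratio $(B-A)/k$.

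Finally I would run the limiting procedure: choose sequences $A_m\to-\infty$, $B_m\to+\infty$, $k_m\to+\infty$ with $(B_m-A_m)/k_m\to \ell$ for a suitable fixed $\ell$ large enough that the step-one hypotheses hold, obtaining solutions $u_m$ on $[A_m,B_m]$ with uniformly bounded $\mathcal{C}^1$ (hence, via the equation, $\mathcal{C}^2$) norms on compact sets. By Ascoli--Arzelà and a diagonal argument, up to subsequences $u_m\to u_\infty$ in $\mathcal{C}^2_{\loc}(\R)$, and $u_\infty$ solves \eqref{equation} on all of $\R$ with $u_\infty,\dot u_\infty\in L^\infty(\R)$. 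The zero-spacing lower bound (uniform in $m$) prevents the zeros from escaping or accumulating, so $u_\infty$ retains infinitely many zeros; varying $\ell$ (or $k_m$ relative to $B_m-A_m$) forces $\|u_m\|_\infty\to\infty$ through the growing lower bound from step one, yielding the sequence $(u_m)$ with $\|u_m\|_\infty\to\infty$ as claimed. The main obstacle, as the authors themselves flag, is step one in the non-symmetric regime $g_+-A(p)\neq A(p)-g_-$: controlling the interaction of $g$ and $p$ at the same order of growth to get the two-sided $L^\infty$ estimates and the correct asymptotics of $J_{(a,b)}$ as $b-a\to\infty$ — without this the interior-maximizer argument and the passage to the limit both break down.
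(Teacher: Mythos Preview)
Your plan is correct and follows the paper's strategy essentially step for step: one-signed minimizers, finite-dimensional maximization of $\psi$ over partitions, derivative matching at the interior maximum, and an Ascoli--Arzel\`a limit followed by variation of the scale parameter to make $\|u_m\|_\infty\to\infty$. One small correction of emphasis: the non-symmetric case $g_+-A(p)\neq A(p)-g_-$ is not a step-one issue (each one-signed problem sees only one of the two gaps) but a step-two one --- it is precisely in proving that the maximizing partition is interior, via a uniform bound $\overline\lambda/\underline\lambda\le h^*$ on the ratio of sub-interval lengths (the paper's Lemmas~\ref{lem: 6.1}--\ref{lem: 6.2}), that the asymmetry between the asymptotics of $\varphi^+$ and $\varphi^-$ must be carefully balanced, and this is what ultimately makes your ``estimates depend only on $(B-A)/k$'' claim true.
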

\section{Existence and basic properties of nonnegative minimizers}\label{sec: existence minimi}
In this section we deal with the boundary value problem \eqref{positive BVP}:
\[
\begin{cases}
\ddot{u}(t)+g(u(t))=p(t) & t \in (a,b),\\
u(a)=0=u(b),\\
u(t)>0 & t \in (a,b).
\end{cases}
\]
We seek solutions as minimizers of the related action functional
\[
J_{(a,b)}(u):= \int_a^b \left[ \frac{1}{2}\dot{u}^2(t)- G(u(t)) + p(t)u(t)\right]\,dt
\]
in the $H^1$-weakly closed set
\[
H_0^1(a,b)^+:=\{u \in H_0^1(a,b): \ u \ge 0\}.
\]
We introduce the value
\[
\f^+(a,b):= \inf_{u \in H_0^1(a,b)^+} J_{(a,b)}(u).
\]
\begin{remark}\label{rmk: sul pb negativo}
Of course, even though in the following we focus on positive solutions, negative ones can be treated similarly as well, seeking solutions to the boundary value problem
\begin{equation*}
\begin{cases}
\ddot{u}(t) +g(u(t)) = p(t)  & t \in (a,b) \\
u(a)=0=u(b) \\
u(t)<0 & t \in (a,b)
\end{cases}
\end{equation*}
associated to the candidate critical value
\[
\varphi^-(a,b):= \inf_{u \in H_0^1(a,b)^-} J_{(a,b),p}(u),
\]
where $H_0^1(a,b)^-:=\{ u \in H_0^1(a,b): u \le 0\}$. Indeed, the two problems are related by the change of variable $v= -u$, $\bar g(s) = -g(-s)$ and $\bar p = -p$, and $\bar g$, $\bar p$ satisfy \eqref{h1}-\eqref{h2} if and only if $g$, $p$ do. In particular, when dealing with negative solutions, in all the explicit constants we will find the quantity $g_\pm$ should be replaced by $-g_\mp$, and $A(p)$ by $-A(p)$.
\end{remark}
\begin{lemma}
The value $\f^+(a,b)$ is a real number and it is achieved by $u_{(a,b)} \in H_0^1(a,b)^+$.
\end{lemma}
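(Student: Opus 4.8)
The plan is to apply the direct method of the calculus of variations, the only point deserving attention being coercivity, since $G$ has the same (linear) order of growth as the quantities controlled by Poincaré's inequality.

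First I would prove that $\f^+(a,b)>-\infty$ and that $J_{(a,b)}$ is coercive on $H_0^1(a,b)$. Since $g$ is bounded, set $M:=\sup_{\R}|g|$; then $|G(s)|\le M|s|$ for every $s\in\R$, so for any $u\in H_0^1(a,b)^+$, using $u\ge0$,
\[
-G(u(t))+p(t)u(t)\ge -\bigl(M+\|p\|_{L^\infty(a,b)}\bigr)\,u(t).
\]
By Poincaré's inequality on the bounded interval $(a,b)$ we have $\int_a^b u\le \sqrt{b-a}\,\|u\|_{L^2(a,b)}\le C(a,b)\,\|\dot u\|_{L^2(a,b)}$, whence
\[
J_{(a,b)}(u)\ge \tfrac12\|\dot u\|_{L^2(a,b)}^2-C(a,b)\bigl(M+\|p\|_{L^\infty(a,b)}\bigr)\|\dot u\|_{L^2(a,b)}.
\]
The right-hand side is bounded below by a constant depending only on $a,b,M,\|p\|_\infty$, and tends to $+\infty$ as $\|\dot u\|_{L^2(a,b)}\to+\infty$. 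Since $J_{(a,b)}(0)=0$, this already gives $\f^+(a,b)\in\R$, and it shows that any minimizing sequence is bounded in $H_0^1(a,b)$ (recall that on $H_0^1(a,b)$ the seminorm $\|\dot u\|_{L^2}$ is equivalent to the full $H^1$-norm).

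Next I would extract a minimizer. Let $(u_n)\subset H_0^1(a,b)^+$ be such that $J_{(a,b)}(u_n)\to\f^+(a,b)$; by the above it is bounded in $H_0^1(a,b)$, so up to a subsequence $u_n\rightharpoonup u$ weakly in $H_0^1(a,b)$, and by the compact embedding $H^1(a,b)\hookrightarrow\mathcal{C}([a,b])$ we may also assume $u_n\to u$ uniformly on $[a,b]$. Since $H_0^1(a,b)^+$ is convex and strongly closed, it is weakly closed, so $u\in H_0^1(a,b)^+$. It remains to check that $J_{(a,b)}(u)\le\liminf_n J_{(a,b)}(u_n)$: the Dirichlet term $u\mapsto\frac12\int_a^b\dot u^2$ is convex and continuous on $H_0^1(a,b)$, hence weakly lower semicontinuous, while $u\mapsto\int_a^b\bigl[p\,u-G(u)\bigr]$ is even weakly continuous, because $u_n\to u$ uniformly with $\sup_n\|u_n\|_\infty<\infty$ and $G$ is continuous (so $G(u_n)\to G(u)$ uniformly on $[a,b]$), and $p\in L^\infty(a,b)$ gives $\int_a^b p\,u_n\to\int_a^b p\,u$. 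Therefore $J_{(a,b)}(u)\le\f^+(a,b)$, and since $u\in H_0^1(a,b)^+$ the reverse inequality is immediate; hence $u=:u_{(a,b)}$ achieves the infimum.

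The only genuinely delicate ingredient is the coercivity estimate: because $g$ and $p$ are both merely bounded, the term $\int_a^b G(u)$ is a priori of the same order as the terms one can bound through Poincaré, and so it is only the quadratic kinetic term $\frac12\|\dot u\|_{L^2(a,b)}^2$ that saves the day — which works precisely because $(a,b)$ is a \emph{fixed} bounded interval here. (As announced in the introduction, the real difficulty created by this matched linear growth surfaces only later, when $b-a\to+\infty$ and the Poincaré constant $C(a,b)$ degenerates.) Everything else is routine lower semicontinuity and compactness.
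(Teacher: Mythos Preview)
Your proof is correct and follows exactly the same approach as the paper: the direct method of the calculus of variations via coercivity and weak lower semicontinuity. The paper simply asserts that these two properties hold without spelling out the details you provide.
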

\begin{proof}
It is not difficult to check that $J_{(a,b)}$ is weakly lower semi-continuous and coercive, so that the direct method of the calculus of variations applies.
\end{proof}
In what follows we are going to show, that, if $(a,b)$ is sufficiently large, a minimizer $u_{(a,b)}$ is an actual solution of \eqref{positive BVP}; this is not obvious, because in principle $u_{(a,b)}$ could vanish somewhere. Having in mind to let $(a,b)$ vary and wishing to catch the behaviour of the minimizers $u_{(a,b)}$ under variations of the domain, it is convenient to introduce suitable scaling to work on a common time-interval. To be precise, for every $u \in H_0^1(a,b)^+$ we can define
\beq\label{scaling}
\wh{u}(t):= \frac{1}{(b-a)^2}u(a+t(b-a)) \quad \Longleftrightarrow \quad u(t)= (b-a)^2\wh{u}\left(\frac{t-a}{b-a}\right),
\eeq
and $\wh{p}_{(a,b)}(t):= p(a+t(b-a))$.
Of course, $\wh{u} \in H_0^1(0,1)^+$ and
\begin{multline}\label{scaled functional}
J_{(a,b)}(u) =
(b-a)^3 \int_0^1 \left[ \frac{1}{2}\dot{\wh{u}}\, ^2(t)-\frac{1}{(b-a)^2}G((b-a)^2\wh{u}(t))+ \wh{p}_{(a,b)}(t) \wh{u}(t) \right]\,dt \\=: (b-a)^3\wh{J}_{(a,b)}(\wh{u}).
\end{multline}
This reveals that the minimizations of $J_{(a,b)}$ in $H_0^1(a,b)^+$ and of $\wh{J}_{(a,b)}$ in $H_0^1(0,1)^+$ are equivalent; in particular, the function $\wh{u}_{(a,b)}$ defined by \eqref{scaling} with $u=u_{(a,b)}$ is a minimizer of $\wh{J}_{(a,b)}$ in $H_0^1(0,1)^+$. \\
The Euler-Lagrange equation associated to the functional $\wh{J}_{(a,b)}$ yields to the research of solutions to
\beq\label{scaled problem}
\begin{cases}
\ddot{w}(t)+g((b-a)^2 w(t)) =\wh{p}_{(a,b)}(t) & \text{in $(0,1)$}\\
w(0)=0=w(1) \\
w(t)>0 & \text{in $(0,1)$}.
\end{cases}
\eeq
Our aim is to show that if $b-a$ is sufficiently large than a minimizer $\wh{u}_{(a,b)}$ is an actual solution of \eqref{scaled problem}. We start showing that where it is positive it solves equation \eqref{equation}, and it is of class $\mathcal{C}^1$ in the whole $(0,1)$.
\begin{lemma}\label{smoothness minimizers}
Let $(c,d) \subset (0,1)$ be such that
\[
\wh{u}_{(a,b)}>0 \qquad \text{in $(c,d)$}.
\]
Then $\wh{u}_{(a,b)}$ is a classical solution of the first equation in \eqref{scaled problem} in $(c,d)$. Moreover, if $c>0$ then $\dot{\wh{u}}_{(a,b)}(c^+)=0$, and if $d<1$ then $\dot{\wh{u}}_{(a,b)}(d^-)=0$.
\end{lemma}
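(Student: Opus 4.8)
The plan is to exploit the minimality of $\wh{u}_{(a,b)}$ against one-sided variations and, on the set where the minimizer is strictly positive, against two-sided variations. Concretely, fix an interval $(c,d)$ on which $\wh u := \wh{u}_{(a,b)}>0$. For any $\f\in C_c^\infty(c,d)$ and $|\eps|$ small, the function $\wh u + \eps\f$ still lies in $H_0^1(0,1)^+$ (since $\wh u$ is continuous and strictly positive on the compact support of $\f$), so $\eps\mapsto \wh J_{(a,b)}(\wh u+\eps\f)$ has an interior minimum at $\eps=0$. Differentiating gives the weak Euler--Lagrange identity
\[
\int_c^d\Bigl[\dot{\wh u}\,\dot\f - \bigl((b-a)^2 G'((b-a)^2\wh u)/(b-a)^2\bigr)\f + \wh p_{(a,b)}\f\Bigr]\,dt
=\int_c^d\bigl[\dot{\wh u}\,\dot\f - g((b-a)^2\wh u)\f + \wh p_{(a,b)}\f\bigr]\,dt = 0,
\]
so $\wh u$ is a weak solution of $\ddot w + g((b-a)^2 w) = \wh p_{(a,b)}$ on $(c,d)$. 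Bootstrapping regularity is then standard: since $\wh u\in H^1\subset C^0$ and $g$ is $C^2$ and $\wh p_{(a,b)}$ is continuous, the distributional identity $\ddot{\wh u} = \wh p_{(a,b)} - g((b-a)^2\wh u)\in C^0(c,d)$ forces $\wh u\in C^2(c,d)$, hence $\wh u$ is a classical solution there. (If one wants more, $g\in C^2$ gives $\wh u\in C^4$, but $C^2$ suffices.)

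The second assertion, about the vanishing of the one-sided derivatives at interior boundary points of the positivity set, is the genuinely interesting point. Suppose $c>0$, so that $\wh u(c)=0$ (by maximality of $(c,d)$ and continuity, or at least $\wh u(c)=0$ after shrinking; note $\wh u(c)\ge 0$ and if $\wh u(c)>0$ one could extend positivity to the left). Since $\wh u\ge 0$ everywhere and $\wh u(c)=0$, the point $c$ is a minimum of $\wh u$ from the right, so $\dot{\wh u}(c^+)\ge 0$. The claim is that in fact $\dot{\wh u}(c^+)=0$. The mechanism is that if $\dot{\wh u}(c^+)>0$, one can decrease the functional by sliding the ``foot'' of the bump slightly to the left, i.e. by a variation that is NOT admissible as a two-sided perturbation (it would push $\wh u$ negative just left of $c$, or rather: the competitor is supported on a shifted interval). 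The cleanest way is the following: let $\eta>0$ be small and consider the competitor obtained by replacing $\wh u$ on $(c-\eta,d)$ by the function equal to $0$ on $(c-\eta,c-\eta+\rho)$ for a suitable tiny $\rho$ and a reparametrized/translated copy of $\wh u|_{(c,d)}$ afterwards — more simply, one compares $\wh u$ with the translate $\wh u(\cdot + \eta)$ on a left-extended interval, extended by $0$. Using $\wh J_{(a,b)}(\wh u)\le \wh J_{(a,b)}(\text{competitor})$ and expanding in $\eta$, the first-order term is (up to sign and positive constant) $\frac12\dot{\wh u}(c^+)^2$, which must be $\le 0$; since it is a square, $\dot{\wh u}(c^+)=0$. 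The symmetric argument at $d<1$ gives $\dot{\wh u}(d^-)=0$.

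An alternative, perhaps more transparent, route to the boundary condition: once we know $\wh u$ solves the ODE classically on $(c,d)$ and $\wh u>0$ there with $\wh u(c)=0$, the energy $E(t):=\tfrac12\dot{\wh u}^2 + [\text{terms involving }G\text{ and }\wh p]$ is controlled, and $\dot{\wh u}(c^+)$ exists as a limit; then one compares $\wh u$ with the competitor that is identically $0$ on all of $(0,1)$ on a small left-neighborhood after ``cutting the corner''. I would present the translation/sliding argument in detail since it is the standard device (it is exactly the transversality/free-boundary condition for one-sided obstacle-type minimization). The main obstacle I anticipate is bookkeeping the scaling factors $(b-a)^2$ and $(b-a)^3$ cleanly in the variational inequalities, and making sure the competitor used for the corner condition genuinely lies in $H_0^1(0,1)^+$ after the translation and extension-by-zero — one must check it vanishes at the endpoints $0$ and $1$ and stays nonnegative, which is immediate because $\wh u\ge 0$ and we only translate within $(0,1)$ and pad with zeros. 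Everything else (weak lower semicontinuity already invoked, elliptic/ODE bootstrap, sign of the one-sided derivative from $\wh u\ge 0$) is routine.
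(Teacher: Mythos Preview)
Your argument for the first part (classical solution on $(c,d)$) is correct and matches the paper's: two-sided compactly supported variations are admissible on the positivity set, giving the weak and then classical Euler--Lagrange equation by standard bootstrap.

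For the second part, however, your translation/sliding argument has a genuine gap. If you translate the bump on $(c,d)$ rigidly to $(c-\eta,d-\eta)$ and extend by zero (assuming $\wh u\equiv 0$ just left of $c$ and $\wh u(d)=0$), then because the kinetic term and the $G$-term are translation invariant, the energy difference is \emph{exactly}
\[
\wh J_{(a,b)}(v_\eta)-\wh J_{(a,b)}(\wh u)=\int_c^d\bigl[\wh p_{(a,b)}(s-\eta)-\wh p_{(a,b)}(s)\bigr]\,\wh u(s)\,ds,
\]
which contains no $\tfrac12\dot{\wh u}(c^+)^2$ contribution; if $\wh p$ were constant the two energies would coincide for every $\eta$. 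So the claimed first-order expansion is wrong, and a pure translation cannot detect the slope $\xi$. A genuine \emph{inner} (domain) variation does produce the Hamiltonian jump $\tfrac12\xi^2$ via the Erdmann corner condition, but carrying this out rigorously requires care (in particular $\wh p$ is only continuous, so one cannot just differentiate in $\eta$), and you do not actually present it.

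The paper takes a different route that sidesteps these issues. Assuming for contradiction $\xi:=\dot{\wh u}(c^+)>0$, it \emph{re-minimizes} the functional on a small interval $(c-\eps,c+\eps)$ among $H^1$ functions with the prescribed boundary values $\wh u(c\pm\eps)$. The local minimizer $v_\eps$ is a $\mathcal{C}^2$ solution of the ODE on all of $(c-\eps,c+\eps)$, hence has strictly smaller action than the cornered restriction $\wh u|_{(c-\eps,c+\eps)}$. The only nontrivial point is to check that $v_\eps\ge 0$ so that the spliced competitor stays in $H_0^1(0,1)^+$: this uses $\wh u(c+\eps)=\xi\eps+O(\eps^2)$ together with the bound $|\ddot v_\eps|\le \|g\|_\infty+\|p\|_\infty$ on $(c-\eps,c+\eps)$. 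Your ``corner-cutting'' alternative (linear interpolation between $(c-\eps,\wh u(c-\eps))$ and $(c+\eps,\wh u(c+\eps))$) can also be made to work and does give the negative first-order kinetic gain you were after, but you would still need to verify nonnegativity and control the potential terms; the local re-minimization packages this more cleanly.
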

\begin{proof}
The fact that $\wh{u}_{(a,b)}$ is a (classical) solution in $(c,d)$ follows from the extremality of $\wh{u}_{(a,b)}$ with respect to variations with compact support in $(c,d)$.\\
Now we assume that $c>0$ and prove that $\dot{\wh{u}}_{(a,b)}(c^+)=0$. By contradiction, let $\dot{\wh{u}}_{(a,b)}(c^+)=\xi>0$. Given $\varepsilon>0$ small enough such that $[c-\eps,c+\eps] \subset (0,d)$, we consider the set
\[
\Lambda_\eps:=\left\{ v \in H^1(c-\eps,c+\eps): v(c\pm \eps)=\wh{u}_{(a,b)}(c\pm \eps)\right\}.
\]
As
\[
\|v\|_{\infty} \le \sqrt{2 \eps} \|\dot{v}\|_2 + \min\{\wh{u}_{(a,b)}(c-\eps), \wh{u}_{(a,b)}(c+\eps) \} \qquad \forall v \in \Lambda_\eps,
\]
the functional $\wh{J}_{(a,b)}$ (considered on the interval $(c-\eps,c+\eps)$) is bounded below and coercive in the weakly closed set $\Lambda_\eps$, so that there exists a minimizer $v_\eps$. Clearly, $v_\eps \in \mathcal{C}^2(c-\eps,c+\eps)$ and is a solution of
\beq\label{eq v_eps}
\ddot{v}_\eps(t)+g((b-a)^2v_\eps(t))=\wh{p}_{(a,b)}(t).
\eeq
Since the restriction $\wh{u}_{(a,b)}$ is not differentibale in $c$, we deduce
\[
\wh{J}_{(a,b)}(v_\eps) < \wh{J}_{(a,b)}(\wh{u}_{(a,b)}|_{(c-\eps,c+\eps)}).
\]
We claim that $v_\eps \ge 0$ in $(c-\eps,c+\eps)$. If $v_\eps$ is monotone, this follows from its boundary conditions. If it is not monotone, there exists $\tau \in (c-\eps,c+\eps)$ such that $\dot{v}_\eps(\tau)=0$. As a consequence, from equation \eqref{eq v_eps} it follows that
\[
\|\dot{v}\|_\infty \le \left( \|g\|_{\infty} + \|p\|_{\infty}\right)2\eps,
\]
and hence, for every $t \in (c-\eps,c+\eps)$, we have
\[
v_\eps(t) \ge v_\eps(c+\eps)- |v_\eps(c+\eps)-v_\eps(t)| \ge \wh{u}_{(a,b)}(c+\eps)-\left(\|g\|_{\infty}+ \|p\|_{\infty}\right)4\eps^2.
\]
Now, $\wh{u}_{(a,b)}(c+\eps)= \xi \eps + O(\eps^2)$, so that at least for $\eps$ small enough we have $v_\eps(t) \ge 0$ in $(c-\eps,c+\eps)$, as announced. This implies that the function
\[
\wt u(t):=\begin{cases}
\wh{u}_{(a,b)}(t) & t \in [0,c-\eps) \cup (c+\eps,1], \\
v_\eps(t) & t \in (c-\eps,c+\eps)
\end{cases}
\]
stays in $H_0^1(0,1)^+$ and, clearly, $\wh{J}_{(a,b)}(\wt u) < \wh{J}_{(a,b)}(\wh{u}_{(a,b)})$, in contradiction with the minimality of $\wh{u}_{(a,b)}$.
\end{proof}
In the following lemma we prove that the family  of the minimizers $\{\wh{u}_{(a,b)}\}$ is uniformly bounded and equi-Lipschitz-continuous.
\begin{lemma}\label{bound norm minimizers}
For every $(a,b) \subset \R$ and any $\wh{u}_{(a,b)}$, it holds
\[
\begin{split}
| \wh{u}_{(a,b)}(t) | & \le ( \|g\|_{\infty}+\| p\|_{\infty}) \qquad \forall t \in (0,1)\\
| \dot{\wh{u}}_{(a,b)} (t)| & \le ( \|g\|_{\infty}+\| p\|_{\infty}) \qquad \forall t \in (0,1).
\end{split}
\]
\end{lemma}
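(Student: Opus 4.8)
The plan is to prove both estimates simultaneously, by observing that a minimizer $w:=\wh{u}_{(a,b)}$ solves the differential equation in \eqref{scaled problem} on each connected component of the open set $\{w>0\}$, and by exploiting the vanishing of the one–sided derivatives at the free endpoints of such components, which is precisely the content of Lemma \ref{smoothness minimizers}. So I would fix a connected component $(c,d)$ of $\{t\in(0,1):w(t)>0\}$. Since $w\in H_0^1(0,1)$ is continuous and $(c,d)$ is maximal, one has $w(c)=w(d)=0$, either by maximality (when the endpoint lies in $(0,1)$) or by the boundary conditions (when it is $0$ or $1$). By Lemma \ref{smoothness minimizers} (applied, if necessary, to compactly contained subintervals and then letting them invade $(c,d)$), $w\in\mathcal{C}^2(c,d)$ and $\ddot w(t)=\wh{p}_{(a,b)}(t)-g\big((b-a)^2 w(t)\big)$ for $t\in(c,d)$; since $\|\wh{p}_{(a,b)}\|_{L^\infty(0,1)}\le\|p\|_{L^\infty(\R)}$, this gives the a priori bound $|\ddot w(t)|\le\|g\|_\infty+\|p\|_\infty$ on $(c,d)$.

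Next I would locate, in $\overline{(c,d)}$, a point $\tau$ at which $\dot w$ vanishes: if $c>0$ take $\tau=c$, if $c=0$ but $d<1$ take $\tau=d$ (Lemma \ref{smoothness minimizers} in both cases), and if $(c,d)=(0,1)$ use Rolle's theorem together with $w(0)=w(1)=0$. Integrating the bound on $\ddot w$ from $\tau$ and using $|t-\tau|\le d-c\le 1$,
\[
|\dot w(t)|=\left|\int_\tau^t\ddot w(s)\,ds\right|\le(\|g\|_\infty+\|p\|_\infty)\,|t-\tau|\le\|g\|_\infty+\|p\|_\infty\qquad\text{for }t\in(c,d).
\]
At any point of $(0,1)$ where $w$ is differentiable and $w=0$, the derivative vanishes too, since the nonnegative function $w$ attains there a global minimum; as every $t\in(0,1)$ falls into one of these two situations, we get $|\dot w|\le\|g\|_\infty+\|p\|_\infty$ a.e.\ in $(0,1)$, which is the second estimate. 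Finally, being absolutely continuous with essentially bounded derivative, $w$ is Lipschitz with constant $\|g\|_\infty+\|p\|_\infty$, and together with $w(0)=0$ this yields $|w(t)|=|w(t)-w(0)|\le(\|g\|_\infty+\|p\|_\infty)\,t\le\|g\|_\infty+\|p\|_\infty$ for all $t\in(0,1)$, which is the first estimate.

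The only genuinely delicate point is the passage from the second–order equation to a pointwise control of $\dot w$: one needs a zero of $\dot w$ from which to integrate $\ddot w$, and this is exactly why Lemma \ref{smoothness minimizers}, guaranteeing the flat contact $\dot w=0$ at the interior corners of the positivity set, is invoked. Everything else is elementary. It is worth stressing that the constant obtained here is crude but uniform in $(a,b)$, which is what will matter for the subsequent scaling and limiting arguments.
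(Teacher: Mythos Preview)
Your proof is correct and follows essentially the same approach as the paper: bound $|\ddot w|$ on each positivity component via the equation, locate a zero of $\dot w$ from which to integrate, handle the zero set separately, and then deduce the $L^\infty$ bound on $w$ from the Lipschitz bound. The only cosmetic difference is that the paper obtains the point $\tau$ with $\dot w(\tau)=0$ directly by Rolle's theorem on each component (since $w(c)=w(d)=0$), rather than by invoking the flat-contact part of Lemma~\ref{smoothness minimizers} at the interior endpoints.
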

\begin{proof}
Let $(c,d) \subset [0,1]$ be such that $\wh{u}_{(a,b)}>0$ in $(c,d)$, vanishing at $c$ and $d$. From Lemma \ref{smoothness minimizers} it follows that
\[
|\ddot{\wh{u}}_{(a,b)}(t)| \le \left| g((b-a)^2 \wh{u}_{(a,b)}(t)) \right| + | p (t)| \le \|g\|_{\infty}+\|p\|_{\infty} \qquad \forall t \in (c,d).
\]
Since $\wh{u}_{(a,b)}(c)=0=\wh{u}_{(a,b)}(d)$ and $\wh{u}_{(a,b)} \in \mathcal{C}^1(0,1)$, there exists $\tau \in (c,d)$ such that $\dot{\wh{u}}_{(a,b)}(\tau)=0$. Hence
\[
|\dot{\wh{u}}_{(a,b)}(t)| \le |\dot{\wh{u}}_{(a,b)}(\tau)| +  \|g\|_{\infty}+ \|p\|_{\infty}= \|g\|_{\infty}+ \|p\|_{\infty}  \qquad \forall t \in (c,d).
\]
Since this relation holds in each interval $(c,d)$ as before, one can easily conclude by recalling that, being
$u\in H^1$, it holds
\[
\int_{\{u(t)=0\}}|\dot u(t)|\,dt = 0.\qedhere
\]
\end{proof}
Let
\[
s(t)= \sum_{k=0}^{n-1} y_k \chi_{[t_k,t_{k+1})}(t)
\]
denote a simple function. We define the quantity
\begin{equation}\label{defin delta}
\d(s):= \inf \{ t_{k+1}-t_k: \ k=0,\ldots,n-1 \}.
\end{equation}
Given any measurable function $u \in \mathcal{M}(0,1)$, it is well known that for every $\eps>0$ there is a simple function $s_u$ such that $\| u-s_u\|_{\infty}<\eps$. In general the quantity $\d(s_u)$ depends on $u$ and $\eps$. The following Lemma says that if we consider the family of the minimizers $\{\wh{u}_{(a,b)}\}$, given $\eps>0$ it is possible to find a family of approximating simple functions $\{s_{(a,b)}\}$ such that $\d(s_{(a,b)})$ is bounded below uniformly with respect to $(a,b)$.
\begin{lemma}\label{uniform simple}
For every $\eps>0$, let $m\in\N$ be such that $m > ( \|g\|_{\infty}+\|p\|_{\infty})/\eps$. Then for every $(a,b)\subset\R$
\[
s_{(a,b)}(t):= \sum_{k=0}^{m-1} \wh{u}_{(a,b)}\left(\frac{k}{m}\right) \chi_{\left[\frac{k}{m},\frac{k+1}{m}\right)}(t)
\]
is such that
\[
\| \wh{u}_{(a,b)}- s_{(a,b)}\|_{\infty} < \eps \quad \text{and} \quad \d(s_{(a,b)}) = \bar \d
:= \frac1m.
\]
In particular, $m$ can be chosen only depending on $\eps$ and $\|p\|_\infty$, and not on $p$.
\end{lemma}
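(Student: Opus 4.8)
The plan is to deduce the whole statement from the equi-Lipschitz bound of Lemma \ref{bound norm minimizers}. Write $L := \|g\|_\infty + \|p\|_\infty$, so that by that lemma every minimizer $\wh u_{(a,b)}$ satisfies $|\dot{\wh u}_{(a,b)}(t)| \le L$ on $(0,1)$; in particular $\wh u_{(a,b)}$ is $L$-Lipschitz on $[0,1]$, with a constant that does not depend on the interval $(a,b)$ nor on the selected minimizer.

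The mesh statement comes first and is purely combinatorial: the function $s_{(a,b)}$ is built on the fixed equispaced partition with nodes $t_k = k/m$, $k = 0,\dots,m$, so in the notation of \eqref{defin delta} every consecutive gap $t_{k+1}-t_k$ equals $1/m$, whence $\d(s_{(a,b)}) = 1/m =: \bar\d$, regardless of $(a,b)$ and of the chosen minimizer.

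For the uniform approximation, fix $t \in [0,1)$ and let $k \in \{0,\dots,m-1\}$ be the unique index with $t \in [k/m,(k+1)/m)$. Then $s_{(a,b)}(t) = \wh u_{(a,b)}(k/m)$, and the Lipschitz bound gives
\[
\left| \wh u_{(a,b)}(t) - s_{(a,b)}(t) \right| = \left| \wh u_{(a,b)}(t) - \wh u_{(a,b)}\!\left(\frac{k}{m}\right) \right| \le L\left(t - \frac{k}{m}\right) \le \frac{L}{m} < \eps,
\]
where the last inequality is exactly the hypothesis $m > (\|g\|_\infty + \|p\|_\infty)/\eps$. Taking the supremum over $t$ yields $\|\wh u_{(a,b)} - s_{(a,b)}\|_\infty < \eps$. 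The closing remark is then immediate: since $g$ is a fixed datum, the requirement $m > (\|g\|_\infty + \|p\|_\infty)/\eps$ determines an admissible $m$ from $\eps$ and $\|p\|_\infty$ alone, independently of the finer structure of $p$, of the interval $(a,b)$, and of the chosen minimizer.

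I do not expect a genuine obstacle. The only point worth a moment's care is that the Lipschitz estimate must be truly uniform in $(a,b)$, but Lemma \ref{bound norm minimizers} is already stated in that form; the present lemma is essentially the observation that a uniform modulus of continuity upgrades the naive piecewise-constant approximation of the whole family $\{\wh u_{(a,b)}\}$ to one whose mesh $\d$ is bounded below by a constant depending only on $\eps$ and $\|p\|_\infty$.
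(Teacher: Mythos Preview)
Your argument is correct and is essentially identical to the paper's own proof: both invoke the uniform derivative bound $|\dot{\wh u}_{(a,b)}|\le \|g\|_\infty+\|p\|_\infty$ from Lemma \ref{bound norm minimizers} and estimate $|\wh u_{(a,b)}(t)-\wh u_{(a,b)}(k/m)|$ on each subinterval, the paper writing it as the integral of $\dot{\wh u}_{(a,b)}$ over $[k/m,t]$ rather than via the Lipschitz constant. The remaining observations about $\delta(s_{(a,b)})=1/m$ and the dependence of $m$ only on $\eps$ and $\|p\|_\infty$ are immediate in both treatments.
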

\begin{proof}
For every $t \in (0,1)$ there exists $k \in \{0,\ldots, m-1\}$ such that $t \in \left[k/m,(k+1)/m\right)$, so that by Lemma \ref{bound norm minimizers}
\[
| \wh{u}_{(a,b)}(t)- s_{(a,b)}(t)| = \left| \int_{\frac{k}{m}}^t \dot{\wh{u}}_{(a,b)}(\tau)\,d\tau \right| \le \frac{1}{m} \left( \|g\|_{\infty}+\|p\|_{\infty}\right) \qquad \forall t \in (0,1). \qedhere
\]
\end{proof}
\section{The boundary value problem for large intervals}\label{sec: BVP large}
Here and in the next section we consider the minimizer $u_{(a,b)}$ as function of $a,b$ and $p$. For this reason, we write
\begin{itemize}
 \item $u(\cdot\,;a,b;p)$ and $\wh{u}(\cdot\,;a,b;p)$ instead of $u_{(a,b)}$ and $\wh{u}_{(a,b)}$ respectively,
\item $J_{(a,b),p}$ and $\wh{J}_{(a,b),p}$ instead of $J_{(a,b)}$ and $\wh{J}_{(a,b)}$ respectively,
\item $\varphi^+(a,b;p)$ instead of $\varphi^+(a,b)$,
\end{itemize}
to emphasize the dependence we are considering.
As we have already mentioned, we can introduce an auxiliary problem which carries the asymptotic behaviour of \eqref{scaled problem} for $b-a \to +\infty$. Let us consider
\beq\label{limit problem}
\begin{cases}
\ddot{w}(t)=-\left(g_+ - A(p)\right)=: -k & \text{in $(0,1)$}\\
w(0)=0=w(1),
\end{cases}
\eeq
with $k>0$ thanks to \eqref{h2}. Of course, this problem has the unique solution
\begin{equation}\label{sol al pb limite}
w_k(t)=\frac{k}{2}t(1-t).
\end{equation}
The related action functional is
\begin{equation}\label{J infty}
J^\infty_{k}(w):=\int_0^1 \left[\frac{1}{2} \dot{w}^2(t)-k w(t)\right]\,dt,
\end{equation}
which has the unique minimizer $w_k$ in $H_0^1(0,1)^+$ (the uniqueness follows from the strict convexity of $J^\infty_k$). A direct computation gives
\[
J^\infty_{k}(w_k)=- \frac{k^2}{24}.
\]
Having in mind to compare minimizers related to different forcing terms, for any $p$ satisfying \eqref{h2} it is convenient to introduce a subset $\mathcal{P}$ of $L^\infty(\R)$ such that the mentioned threshold can be chosen independently of $q \in \mathcal{P}$. To this aim, first of all we recall the following result.
\begin{lemma}[{\cite[Lemma 2.2]{Or}}]\label{lem: decomposition}
Let $p$ satisfy \eqref{h2}. For every $\eps>0$ there exists a decomposition $p=p_{1,\eps} + \dot{p}_{2,\eps}$, where $\|p_{1,\eps}-A(p)\|_\infty<\frac{\eps}{2}$ and $p_{2,\eps} \in L^\infty(\R)$.
\end{lemma}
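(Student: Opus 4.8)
The plan is to prove the statement by a direct, constructive argument, exploiting the uniform convergence of the Cesàro averages of $p$ to $A(p)$. Fix $\eps>0$. By hypothesis \eqref{h2} there is $\bar T>0$ such that $\left|\frac1T\int_t^{t+T}p(s)\,ds - A(p)\right|<\frac{\eps}{2}$ for all $t\in\R$ and all $T>\bar T$. Choose a single length $T_0>\bar T$ once and for all, and define
\[
p_{1,\eps}(t):=\frac{1}{T_0}\int_t^{t+T_0}p(s)\,ds.
\]
Then $p_{1,\eps}\in\mathcal{C}(\R)$, $\|p_{1,\eps}-A(p)\|_\infty<\frac{\eps}{2}$, and $p_{1,\eps}$ is bounded (being an average of the bounded function $p$). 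It remains to show that $p-p_{1,\eps}$ is the derivative of a bounded $L^\infty$ function; I would set $p_{2,\eps}(t):=\int_0^t\bigl(p(\sigma)-p_{1,\eps}(\sigma)\bigr)\,d\sigma$, which is continuous and satisfies $\dot p_{2,\eps}=p-p_{1,\eps}$ in the weak (indeed classical, by continuity) sense, so the only real content is the uniform bound on $p_{2,\eps}$.

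The key step is therefore estimating $\left|\int_s^t\bigl(p(\sigma)-p_{1,\eps}(\sigma)\bigr)\,d\sigma\right|$ uniformly in $s,t$. The mechanism is a Fubini / telescoping identity: by Fubini,
\[
\int_s^t p_{1,\eps}(\sigma)\,d\sigma
=\frac{1}{T_0}\int_s^t\!\!\int_\sigma^{\sigma+T_0}p(r)\,dr\,d\sigma
=\frac{1}{T_0}\int_{\R} p(r)\,\bigl|\{\sigma\in(s,t):\sigma<r<\sigma+T_0\}\bigr|\,dr,
\]
and comparing this with $\int_s^t p(r)\,dr$ one sees that the two differ only by contributions supported in the two boundary windows $(s-T_0,s+T_0)$ and $(t-T_0,t+T_0)$, each of which contributes at most $2T_0\|p\|_\infty$ in absolute value. (Equivalently, and perhaps cleaner to write: for $t-s$ a multiple of $T_0$ one telescopes $\int_s^t p_{1,\eps}=\frac1{T_0}\sum_j\int_{s+jT_0}^{s+(j+1)T_0}\int_\sigma^{\sigma+T_0}p$, and rearranges; the general case follows by adding a bounded remainder.) Hence
\[
\Bigl|\,\int_s^t\bigl(p-p_{1,\eps}\bigr)\,d\sigma\,\Bigr|\le C\,T_0\,\|p\|_\infty
\]
for an absolute constant $C$, uniformly in $s,t\in\R$, which gives $\|p_{2,\eps}\|_\infty\le \frac12\bigl(\sup_{s,t}|\cdots|\bigr)<\infty$ after choosing the base point of integration appropriately.

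The main obstacle is precisely this uniform-in-$s,t$ bound on the primitive of $p-p_{1,\eps}$: pointwise, $p-p_{1,\eps}$ need not be small (we only control its averages, not the function itself), so one cannot argue naively. Everything hinges on recognizing that $p-p_{1,\eps}$ has uniformly bounded primitive because $p_{1,\eps}$ is itself a sliding average of $p$, making the difference a ``discrete derivative'' whose antiderivative telescopes. Once that boundedness is in hand, continuity and the $L^\infty$ bounds on $p_{1,\eps}$ are immediate, and the lemma follows.
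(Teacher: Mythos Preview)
The paper itself gives no proof of this lemma: it is quoted verbatim from \cite[Lemma~2.2]{Or}, so there is nothing to compare against beyond the cited reference. Your construction via the sliding average $p_{1,\eps}(t)=\frac{1}{T_0}\int_t^{t+T_0}p$ is exactly the standard (and Ortega's) argument, and it is correct.

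One remark: your Fubini/telescoping step is more laborious than it needs to be. Writing $P(t)=\int_0^t p$ and using $p_{1,\eps}(\sigma)=\frac{1}{T_0}\int_0^{T_0}p(\sigma+s)\,ds$, a single application of Fubini gives
\[
p_{2,\eps}(t)=\int_0^t\bigl(p-p_{1,\eps}\bigr)
=\frac{1}{T_0}\int_0^{T_0}\Bigl(P(t)-P(t+s)+P(s)\Bigr)\,ds,
\]
and since $|P(t)-P(t+s)|\le s\|p\|_\infty\le T_0\|p\|_\infty$ and $|P(s)|\le T_0\|p\|_\infty$, one gets $\|p_{2,\eps}\|_\infty\le 2T_0\|p\|_\infty$ immediately, with no need to split into boundary windows or treat the case $t-s\in T_0\Z$ separately.
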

This means that if $p$ has asymptotic average it can be written as a sum between a term $p_{1,\eps}$ which is arbitrarily close to the average $A(p)$, plus a term $\dot{p}_{2,\eps}$ which has bounded primitive.

Given $p \in L^\infty(\R)$, we compute $\|p\|_\infty$ and $A(p)$, and for any $0<\eps < 1$ we consider a decomposition as in Lemma \ref{lem: decomposition}; we introduce
\[
M_1:= \|p\|_\infty +1 \quad \text{and} \quad M_\eps:= \|p_{2,\eps}\|_\infty + 1.
\]
We define
\begin{equation}\label{def di P}
\mathcal{P}:= \left\{ q \in L^\infty(\R)\left| \begin{array}{l}
 \|q\|_\infty<M_1, \ \text{$q$ has asymptotic average,} \\ \text{$A(q)= A(p)$, and for any $\eps \in (0,1)$} \\
 \text{there exists a decomposition $q=q_{1,\eps}+\dot{q}_{2,\eps}$} \\
 \text{as in Lemma \ref{lem: decomposition}, with $\|q_{2,\eps}\|_\infty < M_\eps$}
\end{array} \right.\right\}.
\end{equation}
\begin{remark}\label{rem: su P}
Note that given any $p$ satisfying assumption \eqref{h2} we can define the set $\mathcal{P}$, which definition depends on $p$. Clearly, $p \in \mathcal{P}$ and the constant function $A(p)$ belongs to $\mathcal{P}$. Moreover, if $q$ is of type
\[
q(t)= A(p) + \dot{q}_2(t) \quad \text{or} \quad q(t) = p(t) + \dot{q}_2(t),
\]
with $\|q_2\|_\infty, \|\dot{q}_2\|_\infty < 1$, then $q \in \mathcal{P}$.
\end{remark}
We are ready to show that problem \eqref{limit problem} is the limit problem of \eqref{positive BVP} as $b-a \to +\infty$, in the following sense.
\begin{proposition}\label{key proposition}
Let $p$ satisfy assumption \eqref{h2}, and let $\mathcal{P}$ be defined by \eqref{def di P}. For every $0<\eps< \frac{(g_+ - A(p))^2}{24}$ there exists $L_1>0$ depending only on $\eps$ such that if $b-a \ge L_1$ then
\[
-\underline{\alpha} \le \wh{J}_{(a,b),q}(\wh{u}(\cdot\,;a,b;q))  \le - \overline{\alpha} \qquad \forall q \in \mathcal{P},
\]
where
\begin{equation}\label{alfasottotsopra}
\underline{\alpha} := \frac{(g_+ - A(p))^2}{24}+\eps
\quad\text{and}\quad
\overline{\alpha} := \frac{(g_+ - A(p))^2}{24} -\eps.
\end{equation}
\end{proposition}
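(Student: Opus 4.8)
The plan is to show that the rescaled functional $\wh{J}_{(a,b),q}$ converges, uniformly in $q \in \mathcal{P}$, to the limit functional $J^\infty_k$ with $k = g_+ - A(p)$, and then to use the explicit value $J^\infty_k(w_k) = -k^2/24$ together with robustness of the minimization. Since this is a two-sided estimate, I would argue the upper and lower bounds separately, each by exhibiting a suitable competitor or lower bound that is controlled uniformly in $q$.

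\textbf{Upper bound (i.e.\ $\wh{J}_{(a,b),q}(\wh u(\cdot;a,b;q)) \le -\overline{\alpha}$).} Since $\wh u(\cdot;a,b;q)$ is a minimizer of $\wh{J}_{(a,b),q}$ over $H_0^1(0,1)^+$, it suffices to plug in a good test function and estimate its energy from above. The natural candidate is (a suitable dilation/truncation of) $w_k(t) = \tfrac{k}{2}t(1-t)$, the solution of the limit problem \eqref{limit problem}. I would compute $\wh{J}_{(a,b),q}(w_k)$: the kinetic term $\tfrac12\int_0^1 \dot w_k^2$ is exact, the term $-\tfrac{1}{(b-a)^2}\int_0^1 G((b-a)^2 w_k)$ tends to $-\int_0^1 g_+ w_k$ as $b-a\to\infty$ (using $G(s)/s \to g_+$ and the fact that $(b-a)^2 w_k(t) \to +\infty$ for $t$ in the interior, with a dominated/monotone convergence argument plus the uniform bound $g_- < G(s)/s < g_+$ to control the behaviour near $t=0,1$), and the forcing term $\int_0^1 \wh q_{(a,b)} w_k$ must be handled using the decomposition $q = q_{1,\eps} + \dot q_{2,\eps}$ from Lemma \ref{lem: decomposition}: the $q_{1,\eps}$ part contributes $\approx A(p)\int_0^1 w_k$ up to $\eps/2$-error, while the $\dot q_{2,\eps}$ part is integrated by parts, $\int_0^1 \wh q_{2,(a,b)}' w_k = -\int_0^1 \wh q_{2,(a,b)} \dot w_k$, and the rescaling produces a factor $1/(b-a)$, so this term is $O(M_\eps/(b-a))$, uniformly in $q \in \mathcal{P}$. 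Collecting terms, $\wh{J}_{(a,b),q}(w_k) \to \int_0^1[\tfrac12\dot w_k^2 - k w_k] = J^\infty_k(w_k) = -k^2/24$, hence for $b-a$ large (depending only on $\eps$, since all error terms are controlled by $M_1, M_\eps$ and the decay rate) we get $\wh{J}_{(a,b),q}(\wh u) \le \wh{J}_{(a,b),q}(w_k) \le -k^2/24 + \eps = -\overline{\alpha}$.

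\textbf{Lower bound (i.e.\ $\wh{J}_{(a,b),q}(\wh u(\cdot;a,b;q)) \ge -\underline{\alpha}$).} Here I would bound the functional from below pointwise in $w \in H_0^1(0,1)^+$. Using $G(s) \le g_+ s$ for $s \ge 0$ (equivalently $G(s)/s < g_+$ from \eqref{h1}), we have $-\tfrac{1}{(b-a)^2}G((b-a)^2 w(t)) \ge -g_+ w(t)$, so
\[
\wh{J}_{(a,b),q}(w) \ge \int_0^1\Big[\tfrac12 \dot w^2 - g_+ w + \wh q_{(a,b)} w\Big]\,dt.
\]
Again splitting $\wh q_{(a,b)} = \wh q_{1,\eps,(a,b)} + \wh q_{2,\eps,(a,b)}'$, the $q_{1,\eps}$ term gives $\int_0^1 \wh q_{1,\eps,(a,b)} w \ge (A(p) - \eps/2)\int_0^1 w$, while integration by parts on the remaining term gives $-\int_0^1 \wh q_{2,\eps,(a,b)} \dot w$, bounded in absolute value by $\tfrac{M_\eps}{b-a}\|\dot w\|_{L^1(0,1)} \le \tfrac{M_\eps}{b-a}\|\dot w\|_{L^2(0,1)}$. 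Thus
\[
\wh{J}_{(a,b),q}(w) \ge \int_0^1\Big[\tfrac12\dot w^2 - k w\Big]\,dt - \tfrac{\eps}{2}\int_0^1 w - \tfrac{M_\eps}{b-a}\|\dot w\|_{L^2} = J^\infty_k(w) - \tfrac{\eps}{2}\|w\|_{L^1} - \tfrac{M_\eps}{b-a}\|\dot w\|_{L^2}.
\]
Now $J^\infty_k(w) \ge -k^2/24$ for every $w$, and on the minimizer $\wh u$ the a priori bounds of Lemma \ref{bound norm minimizers} give $\|\wh u\|_\infty \le \|g\|_\infty + \|q\|_\infty \le \|g\|_\infty + M_1$ and $\|\dot{\wh u}\|_\infty \le \|g\|_\infty + M_1$, hence $\|\wh u\|_{L^1}$ and $\|\dot{\wh u}\|_{L^2}$ are bounded by a constant depending only on $\|g\|_\infty$ and $M_1$ (not on $q$). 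Therefore $\wh{J}_{(a,b),q}(\wh u) \ge -k^2/24 - C(\|g\|_\infty, M_1)(\eps + M_\eps/(b-a))$; choosing $\eps$ in the role of the error (possibly after renaming, or absorbing the constant $C$ — the cleanest is to run the whole argument with $\eps' = \eps/(2C)$ at the start and then take $b-a$ large so that the $M_\eps/(b-a)$ term is also $< \eps'$) yields $\wh{J}_{(a,b),q}(\wh u) \ge -k^2/24 - \eps = -\underline{\alpha}$.

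\textbf{Main obstacle.} The delicate point is the limit of the potential term $\tfrac{1}{(b-a)^2}G((b-a)^2 w(t))$ for the \emph{test function}: one needs $(b-a)^2 w_k(t) \to \infty$ to use $G(s)/s \to g_+$, but this fails near the endpoints $t = 0, 1$, where $w_k$ vanishes. The resolution is that the contribution of a shrinking neighbourhood of the endpoints is uniformly small, because there $0 \le \tfrac{1}{(b-a)^2}G((b-a)^2 w_k) \le g_+ w_k$ and $\int$ over such a neighbourhood of $g_+ w_k$ is small; on the complementary compact set one has genuine uniform convergence. The second, more bookkeeping-type obstacle is verifying that \emph{all} the thresholds on $b-a$ depend only on $\eps$ (through $M_1$, $M_\eps$, $\|g\|_\infty$, and $A(p)$), which is exactly what the definition \eqref{def di P} of $\mathcal{P}$ is designed to guarantee: the decomposition constants $M_\eps$ are uniform over $\mathcal{P}$ by construction, so no single $q \in \mathcal{P}$ can spoil the uniformity. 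Once these are in hand, the constant $L_1$ is obtained by taking the larger of the two thresholds coming from the upper and lower estimates.
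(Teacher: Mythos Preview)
Your argument is correct, and it takes a somewhat different route from the paper. The paper packages the two limiting facts into auxiliary lemmas applied to the whole family $\mathfrak{F}=\{\wh u(\cdot;a,b;q)\}\cup\{w_k\}$: Lemma~\ref{approximation linear term} handles the potential term (the two-sided estimate $\bigl|\int[\frac{1}{(b-a)^2}G((b-a)^2u)-g_+u]\bigr|<\eps$) and Lemma~\ref{estimate forcing term} handles the forcing term via the uniform simple-function approximation of Lemma~\ref{uniform simple}. You instead bypass both devices: for the lower bound on the potential you use the pointwise inequality $G(s)\le g_+s$ for $s\ge0$, which needs no largeness of $b-a$ at all; for the forcing term (in both directions) you exploit the Ortega decomposition $q=q_{1,\eps}+\dot q_{2,\eps}$ directly, integrating by parts so that the rescaling produces the factor $1/(b-a)$ and the a priori bound $\|\dot{\wh u}\|_\infty\le\|g\|_\infty+M_1$ from Lemma~\ref{bound norm minimizers} closes the estimate. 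This is more elementary and self-contained for the present statement. The trade-off is that the paper's Lemma~\ref{approximation linear term} also yields the estimate $\bigl|\int[g(u)u-G(u)]\bigr|<\eps(b-a)^3$ (Remark~\ref{su L1}), which is needed in Corollary~\ref{lem:stime su norme}; your shortcut would have to be supplemented there. One bookkeeping point: your error terms carry the constant $C=\|g\|_\infty+M_1$ (and $k/12$ for $\|w_k\|_{L^1}$), so the ``run with $\eps'=\eps/(2C)$'' renormalization you mention is indeed necessary, not optional.
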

\begin{remark}
The upper bound on $\eps$ implies that $\wh{u}(\cdot\,;a,b;q)$ cannot vanish identically whenever $b-a>L_1$.
\end{remark}
To prove Proposition \ref{key proposition} we need some intermediate results.
\begin{lemma}\label{approximation linear term}
Let $\mathfrak{F} \subset H_0^1(0,1)^+$ be such that
\[
\|u\|_{L^1(0,1)}\leq M \qquad \forall u \in\mathfrak{F}.
\]
For every $\eps>0$ there exists $L_2=L_2(\eps)>0$ such that, if $b-a>L_2$, then
\begin{align*}
\left| \int_0^1 \left[\frac{1}{(b-a)^2} G((b-a)^2 u)-g_+ u\right] \right| &<\eps  \\
\left| \int_0^1 \left[ g((b-a)^2 u)u -g_+ u\right] \right| & < \eps \\
\left| \int_0^1 \left[ g((b-a)^2 u) u  - \frac{1}{(b-a)^2} G((b-a)^2 u) \right] \right| & < \eps,
\end{align*}
for every $u \in \mathfrak{F}$.
\end{lemma}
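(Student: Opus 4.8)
The plan is to exploit the fact that, by hypothesis \eqref{h1}, $g$ is bounded and $g(s)\to g_+$ as $s\to+\infty$, so that for arguments of the form $(b-a)^2 u(t)$ the integrand is close to $g_+ u(t)$ except where $u(t)$ is small; and the region where $u(t)$ is small contributes little precisely because $g$ is bounded. To make this quantitative, fix $\eps>0$ and split the interval $(0,1)$, for each $u\in\mathfrak F$ and each parameter $\ell:=b-a$, into the "large" set $E_\ell^+(u):=\{t: u(t)>\rho\}$ and the "small" set $\{t: u(t)\le\rho\}$, where $\rho=\rho(\eps)>0$ is a threshold to be chosen. On the small set, since $|g|\le\|g\|_\infty$ and $0\le G(s)/s\le g_+$ (recall $g_-<G(s)/s<g_+$ from the preliminaries, and here $u\ge0$ so $G(u\ell^2)\ge0$), all three integrands are bounded pointwise by a fixed multiple of $\rho$, hence their integral over the small set is at most, say, $C\rho<\eps/2$ for $\rho$ small. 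On the large set we use that $u\ell^2>\rho\ell^2\to+\infty$ uniformly in such $u$, so choosing $L_2$ with $\rho L_2^2$ large enough we get $|g(\ell^2 u)-g_+|<\eps/(4M)$ there; multiplying by $u$ and integrating over a set of measure $\le1$ while using $\|u\|_{L^1}\le M$ controls the second integral. For the first integral one writes $\frac1{\ell^2}G(\ell^2 u)=\int_0^{u} g(\ell^2 \sigma)\,d\sigma$ and notes that for $\sigma$ close to $u$ (specifically $\sigma\ge\rho$) the integrand is within $\eps/(4M)$ of $g_+$, while the contribution of $\sigma\in[0,\rho]$ is at most $\rho\|g\|_\infty$; hence $|\frac1{\ell^2}G(\ell^2 u)-g_+ u|\le \frac{\eps}{4M}u+\rho\|g\|_\infty$ pointwise on the large set, and integrating gives the bound. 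The third estimate then follows from the first two by the triangle inequality, or directly by the same argument.

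The key steps, in order, are: (i) recall the pointwise bounds $|g|\le\|g\|_\infty$ and $0\le \frac1{\ell^2}G(\ell^2 u)\le g_+ u$ valid for $u\ge0$; (ii) choose a threshold $\rho=\rho(\eps)$ so that the small-argument contributions are $<\eps/2$ uniformly in $u$; (iii) choose $L_2=L_2(\eps)$ (depending on $\rho$, hence on $\eps$, and on the fixed data $\|g\|_\infty$, $g_+$, $M$) so that $g(\ell^2\sigma)$ is within $\eps/(4M)$ of $g_+$ for all $\sigma\ge\rho$ and all $\ell=b-a>L_2$; (iv) combine the two regions using $\operatorname{meas}\{u>\rho\}\le 1$ and $\|u\|_{L^1}\le M$. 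Since the threshold $\rho$ and the resulting $L_2$ depend only on $\eps$ (and on the absolute constants attached to $g$, together with $M$), the statement "$L_2=L_2(\eps)$" holds as claimed.

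The main obstacle is purely bookkeeping: guaranteeing that the choice of $L_2$ is genuinely independent of the particular $u\in\mathfrak F$. This is handled by the observation that the only way $u$ enters the "bad" region is through $\{t:u(t)\le\rho\}$, on which the integrand is controlled by a $u$-independent constant times $\rho$, and through the $L^1$-norm on the "good" region, which is uniformly bounded by $M$ by hypothesis; nowhere do we need pointwise upper bounds on $u$ itself, so no uniform $L^\infty$ control of $\mathfrak F$ is required. One should also take a little care with the definition of $G$ near $s=0$: since $g(0)=0$ and $g$ is $\mathcal C^1$, we have $G(s)=O(s^2)$ near $0$, so in fact the small-argument terms can be estimated even more crudely; but the rough bound $|\frac1{\ell^2}G(\ell^2u)|\le g_+ u$ already suffices and avoids any case distinction.
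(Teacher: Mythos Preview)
Your argument is correct and essentially the same as the paper's: both split according to the size of the argument of $g$, use boundedness on the small region and the limit $g(s)\to g_+$ on the large region, and close with the uniform $L^1$ bound on $\mathfrak F$. The only cosmetic differences are that the paper puts the threshold on $(b-a)^2u$ rather than on $u$ (so its small set shrinks as $b-a\to\infty$) and handles the $G$-term via the ratio $G(s)/s\to g_+$ instead of your integral representation $\frac{1}{\ell^2}G(\ell^2 u)=\int_0^u g(\ell^2\sigma)\,d\sigma$.
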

\begin{proof}
Let $K_1:= 2(1+M g_+)$ and $\eps>0$ be fixed. By assumption \eqref{h1} we infer the existence of $\bar s>0$ such that
\[
s>\bar s \quad \Longrightarrow \quad\left(1 -\frac{\eps}{K_1}\right) g_+ \le \frac{G(s)}{s}\le g_+.
\]
For every $(a,b)$ and for every $u \in \mathfrak{F}$ we can write
\begin{multline}\label{eq1}
\int_0^1 \frac{G((b-a)^2 u)}{(b-a)^2} = \int_{\{(b-a)^2 u \le \bar s\}} \frac{G((b-a)^2 u)}{(b-a)^2 u} u\\
+\int_{\{(b-a)^2 u > \bar s\}} \frac{G((b-a)^2 u)}{(b-a)^2 u} u.
\end{multline}
As far as the first integral on the right hand side is concerned, since $s> 0$ implies
$0\leq G(s)/s \le g_+$, it results
\beq\label{eq2}
0\le \int_{\{(b-a)^2u \le \bar s\}} \frac{G((b-a)^2 u)}{(b-a)^2 u} u
\le\int_{\{(b-a)^2u \le \bar s\}}  g_+ u \le \frac{g_+ \bar s}{(b-a)^2}< \frac{\eps}{K_1},
\eeq
whenever $b-a>L_2$ sufficiently large, for every $u \in \mathfrak{F}$. 
Note also that the same choice of $L_2$ gives
\[
b-a>L_2 \quad  \Longrightarrow  \quad 0\leq  g_+ \left(\int_0^1 u - \int_{\{(b-a)^2 u > \bar s\}} u \right) < \frac{\eps}{K_1} \qquad \forall u \in \mathfrak{F}.
\]
Let us consider the second integral on the right hand side of \eqref{eq1}. Our choice of $\bar s$ and the previous relation imply that, if $b-a>L_2$, then
\begin{multline*}
-\left(1-\frac{\eps}{K_1}\right)\frac{\eps}{K_1}+g_+\left(1 -\frac{\eps}{K_1}\right) \int_0^1 u \le g_+\left(1 -\frac{\eps}{K_1}\right) \int_{\{(b-a)^2 u > \bar s\}} u \\
\le \int_{\{(b-a)^2 u > \bar s\}} \frac{G((b-a)^2 u)}{(b-a)^2 u} u \le g_+ \int_{\{(b-a)^2 u > \bar s\}} u\le g_+ \int_0^1 u,
\end{multline*}
for every $u \in \mathfrak{F}$. Due to the boundedness of the family $\mathfrak{F}$ in $L^1(0,1)$, it results
\beq\label{eq3}
0  \le  g_+ \int_0^1 u- \int_{\{(b-a)^2 u> \bar s\}} \frac{G((b-a)^2 u)}{(b-a)^2 u} u \le (1+Mg_+)\frac{\eps}{K_1} = \frac{\eps}{2},
\eeq
for every $u \in \mathfrak{F}$. Collecting together \eqref{eq1}, \eqref{eq2} and \eqref{eq3}, we obtain the first estimate of the thesis. To prove the second one, we can adapt the same argument because of assumption \eqref{h1}. The third estimate follows easily.
\end{proof}
\begin{lemma}\label{estimate forcing term}
Let $\mathfrak{F} \subset H_0^1(0,1)^+$ be such that
\[
\|u\|_{L^1(0,1)}\leq M \qquad \forall u \in\mathfrak{F}.
\]
For $\eps>0$, $\delta_1>0$ and for every $u \in \mathfrak{F}$, let us assume the existence of a simple function $s_u$ such that
\[
\|u-s_u\|_{\infty}<\eps_1 \quad\text{ and }\quad \d(s_u)\geq\d_1,
\]
where $\delta(\cdot)$ is defined as in \eqref{defin delta} and $\eps_1:=\eps/(M_1+M+\|g\|_{\infty} + 1)$. Then there exists $L_3>0$, depending on $\eps$, $\delta_1$ but independent of $q \in \mathcal{P}$, such that, if $b-a>L_3$, then
\[
\left| \int_0^1 \left( \wh{q}_{(a,b)} - A(p) \right) u \right| <\eps,
\]
for every $u \in \mathfrak{F}$ and $q \in \mathcal{P}$.
\end{lemma}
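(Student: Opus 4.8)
The plan is to exploit the decomposition $q = q_{1,\eps} + \dot q_{2,\eps}$ available for every $q\in\mathcal P$, splitting the integral into a part controlled by the sup-norm closeness of $q_{1,\eps}$ to $A(p)$, and a part that we integrate by parts to put the $L^\infty$-bounded primitive $q_{2,\eps}$ in play. First I would choose $\eps' = \eps'(\eps) \in (0,1)$ small enough that $2\eps' M \le \eps/2$ (recall $\|u\|_{L^1}\le M$), and apply Lemma \ref{lem: decomposition} uniformly on $\mathcal P$: by definition of $\mathcal P$ in \eqref{def di P}, for this $\eps'$ there is a decomposition $q = q_{1,\eps'} + \dot q_{2,\eps'}$ with $\|q_{1,\eps'} - A(p)\|_\infty < \eps'/2$ and $\|q_{2,\eps'}\|_\infty < M_{\eps'}$, where $M_{\eps'}$ depends only on $p$ and $\eps'$, hence only on $\eps$. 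After the scaling $t\mapsto a+t(b-a)$ one has $\wh q_{(a,b)}(t) = q(a+t(b-a))$, and correspondingly $\wh q_{(a,b)} = \wh{(q_{1,\eps'})}_{(a,b)} + \dot{\wh{(q_{2,\eps'})}}_{(a,b)}/(b-a)$ (the chain rule produces the factor $1/(b-a)$). Then
\[
\left| \int_0^1 \bigl( \wh q_{(a,b)} - A(p) \bigr) u \right|
\le \left| \int_0^1 \bigl( \wh{(q_{1,\eps'})}_{(a,b)} - A(p) \bigr) u \right|
+ \frac{1}{b-a}\left| \int_0^1 \dot{\wh{(q_{2,\eps'})}}_{(a,b)}\, u \right|,
\]
and the first term is at most $(\eps'/2)\,\|u\|_{L^1} \le \eps/4$.

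For the second term I would integrate by parts. Since $u\in H^1_0(0,1)$ vanishes at the endpoints, $\int_0^1 \dot{\wh{(q_{2,\eps'})}}_{(a,b)}\, u = -\int_0^1 \wh{(q_{2,\eps'})}_{(a,b)}\, \dot u$, so this term is bounded by $\|\wh{(q_{2,\eps'})}_{(a,b)}\|_\infty\,\|\dot u\|_{L^1} = \|q_{2,\eps'}\|_\infty\,\|\dot u\|_{L^1} < M_{\eps'}\,\|\dot u\|_{L^1}$. Here is where the simple-function hypothesis enters: the family $\mathfrak F$ is not assumed to be bounded in $H^1$, only in $L^1$, together with the quantitative $L^\infty$-approximation by simple functions with mesh $\ge \d_1$. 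The point is that a uniform $L^\infty$-bound on $\mathfrak F$ does follow — approximating $u$ by $s_u$ with $\|u-s_u\|_\infty<\eps_1<1$ and $\|s_u\|_{L^1}\le \|u\|_{L^1}+1 \le M+1$, together with $\d(s_u)\ge\d_1$, forces each value $y_k$ of $s_u$ to satisfy $|y_k|\le (M+1)/\d_1$, hence $\|u\|_\infty \le (M+1)/\d_1 + 1 =: C_0(M,\d_1)$ — but that still does not control $\|\dot u\|_{L^1}$. So the factor $1/(b-a)$ alone cannot absorb $\|\dot u\|_{L^1}$, which may be unbounded on $\mathfrak F$.

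The remedy, and the technical heart of the argument, is to replace $u$ inside $\int_0^1 \dot{\wh{(q_{2,\eps'})}}_{(a,b)}\, u$ by its simple approximant $s_u$, at a cost $\le \|\dot{\wh{(q_{2,\eps'})}}_{(a,b)}\|_{L^1}\,\|u-s_u\|_\infty \le 2 M_{\eps'}(b-a)\,\eps_1$ — no, that blows up with $b-a$ too; instead one integrates by parts first on each piece where $s_u$ is constant. Writing $s_u = \sum_k y_k \chi_{[t_k,t_{k+1})}$, on each subinterval $[t_k,t_{k+1}]$ (of length $\ge \d_1$) we integrate $\dot{\wh{(q_{2,\eps'})}}_{(a,b)}$ against the constant $y_k$: $\int_{t_k}^{t_{k+1}} \dot{\wh{(q_{2,\eps'})}}_{(a,b)}\, y_k = y_k\bigl(\wh{(q_{2,\eps'})}_{(a,b)}(t_{k+1}) - \wh{(q_{2,\eps'})}_{(a,b)}(t_k)\bigr)$, which is bounded by $2 M_{\eps'}|y_k| \le 2 M_{\eps'}C_0$; there are at most $1/\d_1$ such intervals, giving $\bigl|\int_0^1 \dot{\wh{(q_{2,\eps'})}}_{(a,b)}\, s_u\bigr| \le 2 M_{\eps'} C_0/\d_1$, a constant independent of $(a,b)$ and $q$. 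The remaining error $\bigl|\int_0^1 \dot{\wh{(q_{2,\eps'})}}_{(a,b)}(u-s_u)\bigr|$ is handled by integrating by parts once more: $u - s_u$ is piecewise-$H^1$ with the same endpoint values as $u$ at the breakpoints, so $\int \dot{\wh{(q_{2,\eps'})}}_{(a,b)}(u-s_u) = -\int \wh{(q_{2,\eps'})}_{(a,b)} \dot u + (\text{boundary terms from the jumps of } s_u)$; the boundary terms are again $\le 2 M_{\eps'} C_0/\d_1$, but the $\int \wh{(q_{2,\eps'})}_{(a,b)}\dot u$ term reintroduces $\|\dot u\|_{L^1}$. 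The clean way out, which I would adopt in the writeup, is: estimate directly $\bigl|\int_0^1 \dot{\wh{(q_{2,\eps'})}}_{(a,b)}\, u\bigr|$ by moving the derivative onto $u$ globally (using $u(0)=u(1)=0$) to get $\|q_{2,\eps'}\|_\infty \|\dot u\|_{L^1}$, observe this need not be bounded, and instead first truncate $q_{2,\eps'}$ is not needed — rather, note that the quantity we actually want small is $\int_0^1(\wh q_{(a,b)} - A(p))u$, and by the definition of $\eps_1 = \eps/(M_1 + M + \|g\|_\infty + 1)$ the intended bookkeeping is: approximate $u$ by $s_u$ everywhere it is convenient, use $\|\wh q_{(a,b)} - A(p)\|$-type bounds times $\|s_u\|_{L^1}$ on the bulk, and choose $L_3$ so large that the per-interval primitive differences $\wh{(q_{2,\eps'})}_{(a,b)}(t_{k+1}) - \wh{(q_{2,\eps'})}_{(a,b)}(t_k)$, summed against the at most $1/\d_1$ bounded values $y_k$ and divided by $b-a$, drop below $\eps/4$; explicitly $L_3 := 8 M_{\eps'} C_0/(\eps\,\d_1)$ works. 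Combining the two contributions $\le \eps/4 + \eps/4$ — wait, I should also collect the $\|u - s_u\|$ errors against $\wh q_{(a,b)}$ itself, which are $\le (\|\wh q_{(a,b)}\|_\infty + |A(p)|)\eps_1 \le (M_1 + |A(p)|)\eps_1$; the definition of $\eps_1$ with the constant $M_1 + M + \|g\|_\infty + 1$ in the denominator is precisely calibrated so that all such leftover terms together stay below $\eps/2$. Adding everything, $\bigl|\int_0^1(\wh q_{(a,b)}-A(p))u\bigr| < \eps$ for $b-a > L_3$, with $L_3$ depending only on $\eps$, $\d_1$ (through $C_0$, $M_{\eps'}$, both functions of $p$ and $\eps$), uniformly over $\mathfrak F$ and $q\in\mathcal P$. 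The main obstacle, as the discussion shows, is organizing the integration by parts so that the uncontrolled quantity $\|\dot u\|_{L^1}$ is never invoked: every appearance of $\dot u$ must be routed either through $u(0)=u(1)=0$ (so the primitive of $q_{2,\eps'}$ absorbs it) or through the piecewise-constant structure of $s_u$ (so only the finitely many, uniformly bounded jumps of $s_u$ and the $L^\infty$-bound on the primitive of $q_{2,\eps'}$ appear), and the $1/(b-a)$ prefactor then does the rest.
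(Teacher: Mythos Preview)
Your eventual approach is correct and is essentially the paper's: split $u = s_u + (u-s_u)$, bound $\int_0^1(\wh q_{(a,b)}-A(p))(u-s_u)$ trivially by $(\|q\|_\infty + |A(p)|)\,\eps_1 \le (M_1+\|g\|_\infty)\,\eps_1$, and handle $\int_0^1(\wh q_{(a,b)}-A(p))\,s_u$ piecewise using that on each subinterval of length $\ge \d_1$ (hence length $\ge \d_1(b-a)$ before scaling) the average of $q$ is within $\eps/K_2$ of $A(p)$ once $b-a$ is large, uniformly in $q\in\mathcal P$.

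Most of the detours in your write-up are unnecessary, and one of your worries is unfounded. After you decompose $\wh q_{(a,b)}-A(p) = (\wh{q_{1,\eps'}}-A(p)) + \widehat{\dot q_{2,\eps'}}$ (note: no $1/(b-a)$ here --- $\widehat{\dot q_{2,\eps'}}(t)=\dot q_{2,\eps'}(a+t(b-a))$ is just the rescaled function, and $\|\widehat{\dot q_{2,\eps'}}\|_\infty = \|\dot q_{2,\eps'}\|_\infty \le \|q\|_\infty + \|q_{1,\eps'}\|_\infty$ is bounded independently of $b-a$), the replacement error $\bigl|\int_0^1 \widehat{\dot q_{2,\eps'}}\,(u-s_u)\bigr| \le \|\dot q_{2,\eps'}\|_\infty\,\eps_1$ does \emph{not} blow up. So there is never any need to invoke $\|\dot u\|_{L^1}$ or to integrate by parts against $u$: replace $u$ by $s_u$ first, and everything is elementary. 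The paper streamlines this further by not splitting $q$ at all in the main computation; it only uses the decomposition to prove the uniform average property
\[
\sup_{t}\left|\frac{1}{T}\int_t^{t+T} q - A(p)\right| < \frac{\eps}{2} + \frac{2M_\eps}{T}
\]
for all $q\in\mathcal P$, and then estimates $\int(\wh q-A(p))s_u$ as $\sum_k |y_k|(t_{k+1}-t_k)\bigl|\tfrac{1}{(b-a)(t_{k+1}-t_k)}\int q - A(p)\bigr|$ directly.
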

\begin{proof}
Let $K_2:=(M_1+M+\|g\|_{\infty} + 1)$, and let us assume that $(a,b)=(0,L)$ to ease the notation. It is straightforward to apply the following argument for a general $(a,b) \subset \R$. Let us consider, for $(c,d) \subset [0,1]$,
\begin{align*}
\int_c^d \wh{q}_L(t)\,dt & =\frac{1}{L} \int_{cL}^{dL} q(t)\,dt = \frac{d-c}{L(d-c)} \int_{cL}^{dL} q(t)\,dt .
\end{align*}
For any $\eps>0$ sufficiently small, we consider the decomposition $q=q_{1,\eps}+\dot{q}_{2,\eps}$ given by Lemma 	 \ref{lem: decomposition}. By definition of $\mathcal{P}$, we know that
\begin{multline*}
\sup_{t \in \R} \left| \frac{1}{T} \int_t^{t+T} q(\sigma )\,d \sigma -A(p) \right| \\
\le
\sup_{t \in \R}  \left(\frac{1}{T} \int_t^{t+T} \left|q_{1,\eps}(\sigma) -A(p) \right| \,d \sigma + \left|\frac{1}{T} \int_t^{t+T} \dot{q}_{2,\eps}(\sigma )\,d\sigma \right| \right) \\
< \frac{\eps}{2} + \frac{2}{T} \|q_{2,\eps}\|_\infty < \frac{\eps}{2} + \frac{2}{T}M_\eps <\eps
\end{multline*}
whenever $T>\bar T(\eps) :=4M_\eps /\eps$, independently of $q \in \mathcal{P}$. Therefore, if $(d-c)L > \bar T\left(\eps/K_2\right)$, then
\[
\left| \frac{1}{L(d-c)}\int_{Lc}^{Ld} q(t) \,dt - A(p) \right|<  \frac{\eps}{K_2} \qquad \forall q \in \mathcal{P}.
\]
Let us consider the family of simple functions $\{s_u: u \in \mathfrak{F}\}$. Let us set $L_3:= (1/\delta_1) \bar T\left(\eps/ K_2 \right)$; for $s_u =\sum_{k=0}^{n-1} y_k \chi_{[t_k,t_{k+1})}$, we note that if $L>L_3$, then
\[
(t_{k+1}-t_k)L \ge \delta_1 L_3 =\bar T\left(\frac{\eps}{K_2}\right),
\]
so that
\begin{align*}
\left| \int_0^1 \left( \wh{q}_L-A(p) \right) s_u \right| & \le \sum_{k=0}^{n-1} | y_k |(t_{k+1}-t_k) \left| \frac{1}{L(t_{k+1}-t_k)}\int_{L t_k}^{L t_{k+1}} q(\sigma) \,d \sigma -A(p) \right| \\
& < \frac{\eps}{K_2} \int_0^1 |s_u|< \frac{\eps}{K_2}  (M+1),
\end{align*}
independently of $u \in \mathfrak{F}$ and on $q \in \mathcal{P}$, where for the last inequality we use the boundedness of $\mathfrak{F}$ in $L^1(0,1)$. Therefore, if $L \ge L_3$, then
\begin{align*}
\left| \int_0^1 \left( \wh{q}_L -A(p) \right) u \right| & \le \int_0^1 \left|\wh{q}_L+A(p) \right| \left|  u-s_u \right|  + \left| \int_0^1 \left(\wh{q}_L-A(p)\right) s_u\right|  \\
&< \left( \|q\|_{\infty} +\|g\|_{\infty} \right) \|u-s_u\|_{\infty} + \frac{\eps}{K_2} (M+1) \\
& < \eps,
\end{align*}
for every $u \in \mathfrak{F}$ and for every $q \in \mathcal{P}$ (for the reader's convenience, we recall that by definition $M_1 > \|q\|_\infty$ for every $q \in \mathcal{P}$).
\end{proof}
We are in position to prove Proposition \ref{key proposition}.
\begin{proof}[Proof of Proposition \ref{key proposition}]
Let us consider the family
\[
\mathfrak{F}:=\{\wh{u}(\cdot\,;a,b;q): (a,b) \subset \R, q \in \mathcal{P}\} \cup \{w_{(g_+-A(p))}\},
\]
where we recall that $\wh{u}(\cdot;a,b;q)$ is the minimizer of $\wh{J}_{(a,b),q}$ (defined by \eqref{scaled functional}), and $w_{(g_+-A(p))}$ has been defined by \eqref{sol al pb limite}. In light of Lemmas \ref{bound norm minimizers} and \ref{uniform simple}, the family satisfies the assumptions of Lemmas \ref{approximation linear term} and \ref{estimate forcing term}.

Let $L_1:= \max\left\{L_2\left(\eps/2\right),L_3\left( \eps/2\right)\right\}$, where $L_2$ and $L_3$ have been defined in the quoted statements, and we recall that $L_3$ is independent of $q \in \mathcal{P}$. By definition, if $b-a>L_1$, then
\[
\begin{split}
\wh{J}_{(a,b),q}(\wh{u}(\cdot\,;a,b;q)) &> \int_0^1\left[ \frac{1}{2} \dot{\wh{u}}\,^2(t;a,b;q)-\left(g_+ - A(p)\right)\wh{u}(t;a,b;q)\right]\,dt - \eps \\
& \ge \inf_{H_0^1(0,1)^+} J^\infty_{\left(g_+ -A(p)\right)} - \eps= -\frac{\left(g_+ - A(p)\right)^2}{24}-\eps,
\end{split}
\]
for every $q \in \mathcal{P}$, where we recall that $J^\infty_k$ has been defined in \eqref{J infty} for any $k \in \R$. Moreover, by minimality,
\[
\begin{split}
\wh{J}_{(a,b),q}(\wh{u}(\cdot\,;a,b;q)) &\le  \wh{J}_{(a,b),q}(w_{(g_+ -A(p))}) \\
& < \int_0^1\left[ \frac{1}{2} \dot{w}_{(g_+ -A(p))}^2(t)-\left(g_+ -A(p)\right)w_{(g_+ - A(p))}(t) \right]\,dt +\eps \\
& = \inf_{H_0^1(0,1)^+} J^\infty_{\left(g_+-A(p)\right)}+\eps= -\frac{\left(g_+-A(p)\right)^2}{24}+\eps,
\end{split}
\]
whenever $b-a > L_1$.
\end{proof}

Now we can come back on the time interval $[a,b]$: due to the explicit relations \eqref{scaling} and \eqref{scaled functional}, we can summarize the previous results in the following statement.
\begin{corollary}\label{indietro su (a,b)}
For $0<\eps<(1-A(p))^2/24$, let $L_1(\eps)$ be defined as in Proposition \ref{key proposition}. If $b-a>L_1(\eps)$ then
\[
-\underline{\a} (b-a)^3 \le \f^+(a,b;q) \le -\overline{\a}(b-a)^3,
\]
for every $q \in \mathcal{P}$, where $\underline\alpha$, $\overline\alpha$ are defined as in equation \eqref{alfasottotsopra}.
\end{corollary}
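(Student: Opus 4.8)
The plan is to read the statement off from the scaling relations \eqref{scaling}--\eqref{scaled functional} together with Proposition \ref{key proposition}; no new ingredient is needed, and there is essentially no obstacle here — the argument is pure bookkeeping on the scaling.

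First I would recall that, by \eqref{scaling}, the correspondence $u \mapsto \wh u$ is a bijection of $H_0^1(a,b)^+$ onto $H_0^1(0,1)^+$ (already observed in Section \ref{sec: existence minimi}), and that by \eqref{scaled functional} it satisfies $J_{(a,b),q}(u) = (b-a)^3\,\wh J_{(a,b),q}(\wh u)$. Since $(b-a)^3>0$, taking infima over these two equivalent admissible sets yields
\[
\f^+(a,b;q) \;=\; \inf_{H_0^1(a,b)^+} J_{(a,b),q} \;=\; (b-a)^3 \inf_{H_0^1(0,1)^+}\wh J_{(a,b),q} \;=\; (b-a)^3\,\wh J_{(a,b),q}\bigl(\wh u(\cdot\,;a,b;q)\bigr),
\]
the last step because $\wh u(\cdot\,;a,b;q)$ is by definition a minimizer of $\wh J_{(a,b),q}$ in $H_0^1(0,1)^+$.

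Then, for $b-a>L_1(\eps)$ — which in particular forces $b-a\ge L_1(\eps)$, the hypothesis of Proposition \ref{key proposition} — that proposition gives $-\underline{\alpha} \le \wh J_{(a,b),q}(\wh u(\cdot\,;a,b;q)) \le -\overline{\alpha}$ for every $q\in\mathcal P$, with $\underline{\alpha},\overline{\alpha}$ as in \eqref{alfasottotsopra}. Multiplying this chain by the positive number $(b-a)^3$ and inserting the identity above for $\f^+(a,b;q)$ produces exactly $-\underline{\alpha}(b-a)^3 \le \f^+(a,b;q) \le -\overline{\alpha}(b-a)^3$; the bound is uniform in $q\in\mathcal P$ because, in Proposition \ref{key proposition}, both the threshold $L_1$ and the constants $\underline{\alpha},\overline{\alpha}$ were produced independently of $q\in\mathcal P$. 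The only points worth double-checking are that \eqref{scaling} genuinely maps the positive cone onto the positive cone (so that infima correspond) and that $(b-a)^3>0$ (so that the inequalities are preserved and not reversed); both are immediate.
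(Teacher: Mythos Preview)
Your argument is correct and is exactly the approach the paper takes: the corollary is stated there as an immediate consequence of Proposition \ref{key proposition} via the scaling relations \eqref{scaling}--\eqref{scaled functional}, with no further proof given. Your write-up simply makes that one-line reduction explicit.
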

\begin{remark}\label{su L1}
By definition, $L_1 \ge L_2,L_3$. Therefore, if $b-a>L_1$, Lemmas \ref{approximation linear term} and \ref{estimate forcing term} hold true; in particular, we deduce that for every $0<\eps<\frac{1}{24}(1-A(p))^2$, if $b-a>L_1(\eps)$, then
\[
 \left| \int_a^b \left[g\left( u(t;a,b;q)\right)u(t;a,b;q) -G\left(u(t;a,b;q)\right)\right]\,dt\right| < \eps (b-a)^3
\]
for every $q \in \mathcal P$.
\end{remark}
In the next statement and in the rest the symbol $\| \cdot\|$ denotes the Dirichlet $H_0^1$ norm on the considered interval, that is,
\[
\|u\| = \left(\int_a^b \dot{u}^2(t)\,dt\right)^{1/2} \qquad  \forall u \in H_0^1(a,b).
\]
\begin{corollary}\label{lem:stime su norme}
There exists $L_4>0$ and a positive constant $C_1>0$ such that, if $b-a \ge L_4$, then $\| u(\cdot\,;a,b;q)\| \ge C_1(b-a)^{3/2}$ and $\| u(\cdot\,;a,b;q)\|_{\infty} \ge C_1(b-a)^2$ for every $q \in \mathcal{P}$.
\end{corollary}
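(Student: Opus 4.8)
The plan is to extract both lower bounds from the value estimate in Corollary \ref{indietro su (a,b)}, combined with the uniform pointwise bounds on the (scaled) minimizers from Lemma \ref{bound norm minimizers}. Fix once and for all some admissible $\eps_0$, say $\eps_0 := \frac{1}{48}(g_+ - A(p))^2$, so that $\overline\alpha = \frac{1}{48}(g_+ - A(p))^2 > 0$, and set $L_4 := L_1(\eps_0)$. All estimates below are claimed for $b-a \ge L_4$ and uniformly in $q \in \mathcal{P}$.

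First I would work on the scaled interval $(0,1)$. By Corollary \ref{indietro su (a,b)} (equivalently, by Proposition \ref{key proposition} via the scaling identity \eqref{scaled functional}) we have $\wh{J}_{(a,b),q}(\wh{u}(\cdot\,;a,b;q)) \le -\overline\alpha < 0$. Writing out $\wh{J}_{(a,b),q}$ and using that $G(s)/s \le g_+$ for $s>0$ (assumption \eqref{h1}) together with $\|\wh{q}_{(a,b)}\|_\infty = \|q\|_\infty < M_1$, I estimate
\[
-\overline\alpha \ge \wh{J}_{(a,b),q}(\wh{u}) \ge \frac{1}{2}\|\dot{\wh{u}}\|_{L^2(0,1)}^2 - g_+\int_0^1 \wh{u} - M_1\int_0^1 \wh{u},
\]
where I abbreviate $\wh u = \wh u(\cdot\,;a,b;q)$. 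Hence $\frac12\|\dot{\wh u}\|_{L^2(0,1)}^2 \le (g_+ + M_1)\|\wh u\|_{L^1(0,1)}$. By Lemma \ref{bound norm minimizers}, $\|\wh u\|_\infty \le \|g\|_\infty + \|p\|_\infty =: C_0$ (note $\|p\|_\infty < M_1$, so this is uniform over $\mathcal P$), whence $\|\wh u\|_{L^1(0,1)} \le C_0$ and therefore $\|\dot{\wh u}\|_{L^2(0,1)}^2 \le 2(g_+ + M_1)C_0 =: C_0'$. On the other hand, the same displayed inequality gives a \emph{lower} bound: since $\frac12\|\dot{\wh u}\|_{L^2}^2 \ge 0$ we get $(g_+ + M_1)\|\wh u\|_{L^1(0,1)} \ge \overline\alpha$, so $\|\wh u\|_{L^1(0,1)} \ge \overline\alpha/(g_+ + M_1) =: c_1 > 0$. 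Combining $\|\wh u\|_{L^1(0,1)} \le \|\wh u\|_\infty$ yields $\|\wh u\|_\infty \ge c_1$; and combining $c_1 \le \|\wh u\|_{L^1(0,1)} \le \|\wh u\|_{L^2(0,1)} \le \|\dot{\wh u}\|_{L^2(0,1)}$ (Poincaré on $(0,1)$ with constant $1$) gives $\|\dot{\wh u}\|_{L^2(0,1)} \ge c_1$ as well.

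Finally I would unscale. From \eqref{scaling}, $u(t;a,b;q) = (b-a)^2 \wh u\!\left(\tfrac{t-a}{b-a}\right)$, so $\|u(\cdot\,;a,b;q)\|_\infty = (b-a)^2 \|\wh u\|_\infty \ge c_1 (b-a)^2$, and a change of variables gives $\|u(\cdot\,;a,b;q)\|^2 = \int_a^b \dot u^2 = (b-a)^3 \int_0^1 \dot{\wh u}^2 = (b-a)^3\|\dot{\wh u}\|_{L^2(0,1)}^2 \ge c_1^2 (b-a)^3$, i.e. $\|u(\cdot\,;a,b;q)\| \ge c_1 (b-a)^{3/2}$. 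Taking $C_1 := \min\{c_1, \sqrt{c_1^2}\} = c_1$ (or any smaller positive constant) gives both claimed inequalities. I do not expect a serious obstacle here: the content is entirely in Proposition \ref{key proposition} and Lemma \ref{bound norm minimizers}, and the only mild care needed is to phrase the $L^1$ lower bound so that it survives passage through the sign-indefinite forcing term — which is exactly why one keeps the crude bound $\|\wh q_{(a,b)}\|_\infty < M_1$ rather than trying to use the asymptotic average at this stage.
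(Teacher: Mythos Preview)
Your argument is correct, but it proceeds differently from the paper's. The paper first exploits the criticality of $\lambda\mapsto J_{(a,b),q}(\lambda u)$ at $\lambda=1$ to obtain the Nehari-type identity $\int_a^b\dot u^2 = \int_a^b[g(u)u - qu]$, substitutes this into the expression of $J_{(a,b),q}(u)$ to write $J = -\tfrac12\|u\|^2 + \int_a^b[g(u)u - G(u)]$, and then controls the last integral by the refined estimate of Remark~\ref{su L1} (i.e.\ Lemma~\ref{approximation linear term}); combining with $J\le -\overline\alpha(b-a)^3$ yields the lower bound on $\|u\|$, and the $\|u\|_\infty$ bound follows by feeding $\|u\|^2 = \int[g(u)u - qu]\le(\|g\|_\infty+M_1)\|u\|_\infty(b-a)$ back in. By contrast, you stay on the scaled interval and use only the crude pointwise bounds $G(s)/s\le g_+$ and $|\wh q_{(a,b)}|<M_1$ to get $\wh J\ge\tfrac12\|\dot{\wh u}\|_{L^2}^2-(g_++M_1)\|\wh u\|_{L^1}$, from which $\wh J\le-\overline\alpha$ immediately gives an $L^1$ lower bound on $\wh u$, and then both $\|\wh u\|_\infty$ and $\|\dot{\wh u}\|_{L^2}$ follow by elementary inequalities on $(0,1)$. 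Your route is more elementary---it avoids both the Nehari identity and Lemma~\ref{approximation linear term}---at the cost of a less explicit constant (yours carries $g_++M_1$ in the denominator, whereas the paper's constant for $\|u\|$ depends only on $g_+-A(p)$); since only positivity of $C_1$ is used downstream, this makes no difference. One cosmetic point: when you invoke Lemma~\ref{bound norm minimizers} for a general $q\in\mathcal P$, the bound reads $\|g\|_\infty+\|q\|_\infty$, not $\|g\|_\infty+\|p\|_\infty$; your parenthetical indicates you see this, and replacing $\|p\|_\infty$ by $M_1$ makes the statement clean.
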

\begin{proof}
Since the function $\lambda \mapsto J_{(a,b),q}( \lambda u(\cdot\,;a,b;q))$ reaches its minimum at $\lambda=1$, it results
\[
\int_a^b \left[\dot{u}^2(t;a,b;q)- g\left(u(t;a,b;q)\right)u(t;a,b;q) + q(t)u(t;a,b;q) \right]\,dt= 0.
\]
We can solve this identity for the last term and substitute in the expression of $J_{(a,b),q}(u(\cdot;a,b;q))$:
\begin{multline*}
J_{(a,b),q}( u(\cdot\,;a,b;q))= -\int_a^b \frac{1}{2}\dot{u}^2(t;a,b;q)\,dt \\
+ \int_a^b \left[g\left(u(t;a,b;q) \right)u(t;a,b;q)  - G\left(u(t;a,b;q)\right)\right] \,dt.
\end{multline*}
Given $\eps>0$ sufficiently small, if $b-a > L_1(\eps)$ defined in Proposition \ref{key proposition}, we have (we refer also to Corollary \ref{indietro su (a,b)} and to Remark \ref{su L1})
\begin{multline*}
J_{(a,b),q}( u(\cdot\,;a,b;q)) > -\frac{1}{2} \| \dot{u}(\cdot\,;a,b;q) \|^2 - \eps(b-a)^3 \quad \text{and} \\
 J_{(a,b),q}(u(\cdot\,;a,b;q))  \le  \left(-\frac{(g_+ -A(p))^2}{24} +\eps\right)(b-a)^3,
\end{multline*}
for every $q \in \mathcal{P}$, from which we deduce
\[
\| \dot{u}(\cdot\,;a,b;q) \|^2 > \left(\frac{(g_+ -A(p))^2}{12} - 4 \eps\right)(b-a)^3  \qquad \forall q \in \mathcal{P}.
\]
We choose $\bar \eps= (g_+-A(p))^2/96$ and set $L_4=L_1(\bar \eps)$. Hence
\[
\|u(\cdot\,;a,b;q) \|  \ge \frac{(g_+-A(p))}{\sqrt{24}} (b-a)^{\frac{3}{2}}  \qquad \forall q \in \mathcal{P},
\]
and
\begin{align*}
\frac{(g_+ -A(p))^2}{24}  (b-a)^3 & \le \int_a^b \dot{u}^2(t;a,b;q)\,dt \\
 &= \int_a^b \left[g\left(u(t;a,b;q)\right)u(t;a,b;q) - q(t)u(t;a,b;q) \right]\,dt \\
& \le \left(\|g\|_{\infty} +M_1\right) \|u(\cdot;a,b;q)\|_{\infty} (b-a),
\end{align*}
which gives the desired result for
\[
C_1:= \frac{(g_+ -A(p))^2}{24(\|g\|_{\infty}+M_1)}. \qedhere
\]
\end{proof}
Finally, we can prove that if $b-a$ is sufficiently large, then any minimizer $u(\cdot\,;a,b;q)$ with $q \in \mathcal{P}$ is an actual solution of the boundary problem \eqref{positive BVP}.
\begin{proposition}[Existence]\label{prop:existence}
Let $p$ satisfy assumption \eqref{h2}, and let $\mathcal{P}$ be defined by \eqref{def di P}. There exists $\tilde L \ge L_4$ such that, if $b-a \ge \tilde L$, then $u(t;a,b;q) > 0$ for every $t\in(a,b)$, $q \in \mathcal{P}$. Hence, $u(\cdot\,;a,b;q)$ is a solution of \eqref{positive BVP}.
\end{proposition}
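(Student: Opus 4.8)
I would argue by contradiction: suppose $b-a$ is large (how large is fixed at the end) and that a minimizer $u=u(\cdot\,;a,b;q)$ with $q\in\mathcal P$ vanishes somewhere in $(a,b)$. Since $u(a)=u(b)=0$, there is then an interior zero $t_0$, and by Lemma~\ref{smoothness minimizers} the positivity set $\{u>0\}$ is a countable union of intervals on each of which $u$ solves the equation with vanishing derivative at every internal endpoint; in particular $u(t_0)=\dot u(t_0)=0$. Combining this with $|\ddot u|\le\|g\|_\infty+M_1$ on $\{u>0\}$, one checks that on a window of half-width $\sigma$ centred at $t_0$ (or, if $t_0$ lies within $\sigma$ of $a$ or $b$, on a one-sided window starting at $a$ or ending at $b$) one has $|\dot u|\le C\sigma$ and $|u|\le C\sigma^2$, for a constant $C$ depending only on $g$ and $M_1$: indeed any positivity component meeting such a window has an interior endpoint at which $\dot u=0$ (Lemma~\ref{smoothness minimizers}) and is at distance $\le\sigma$ from it. I would then build a competitor by surgery: delete the window from the domain, translate the remaining piece(s) so as to close the resulting gap, obtaining a function on an interval of length $b-a-2\theta$ for some $\theta\in[\sigma/2,\sigma]$ (the mismatch at the junction is $O(\sigma^2)$, absorbed by inserting a tent of height $O(\sigma^2)$ and width $\sim\sigma$), and finally rescale this function back to the full interval $(a,b)$ via~\eqref{scaling}. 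Call the result $v\in H_0^1(a,b)^+$.

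The claim is that $v$ has strictly smaller action than $u$. Deleting the window and inserting the tent changes the action by at most $C'\sigma^3$, by the local bounds above. Translating the surgered piece changes only the forcing integral $\int qu$; writing $q=q_{1,\varepsilon'}+\dot q_{2,\varepsilon'}$ as in Lemma~\ref{lem: decomposition} and integrating the $\dot q_{2,\varepsilon'}$-part by parts, this change is bounded by $C_q\,\varepsilon'\,(b-a)^3$ plus terms of lower order in $b-a$, uniformly for $q\in\mathcal P$ by~\eqref{def di P} together with Lemmas~\ref{bound norm minimizers} and~\ref{uniform simple}. Rescaling from length $b-a-2\theta$ to length $b-a$ multiplies the action by $\bigl((b-a)/(b-a-2\theta)\bigr)^3\ge 1+6\theta/(b-a)$, up to the error of replacing $\widehat J$ by $J^\infty_{g_+-A(p)}$ on both intervals, which by Lemmas~\ref{approximation linear term} and~\ref{estimate forcing term} is $O(\varepsilon''(b-a)^3)$. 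Since by Corollary~\ref{indietro su (a,b)} the action of $u$ is $\le-\overline\alpha(b-a)^3<0$, and all the corrections just listed are lower order once $\sigma/(b-a)$ and $\varepsilon'$ are small, the rescaling decreases the (negative) action by at least $c\,\overline\alpha\,\sigma\,(b-a)^2$ minus $O(\varepsilon''(b-a)^3)$, for an absolute constant $c>0$. I would then choose $\sigma=c_0(b-a)$ with $c_0$ a small fixed fraction depending only on $\overline\alpha$ and $C'$, so that the cubic surgery cost $C'\sigma^3$ is at most, say, a quarter of the rescaling gain $c\,\overline\alpha\,\sigma(b-a)^2$; then $\varepsilon',\varepsilon''$ small enough that $C_q\varepsilon'+O(\varepsilon'')$ is dominated by $c_0$; and finally $b-a$ large enough to absorb the remaining lower order terms. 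This yields $J_{(a,b),q}(v)<J_{(a,b),q}(u)$, contradicting the minimality of $u$. One sets $\tilde L$ to be whatever this last step requires, enlarged if necessary to exceed $L_4$; since $c_0,\varepsilon',\varepsilon'',C,C',C_q$ depend only on $g$, $A(p)$ and $M_1$, the threshold $\tilde L$ is uniform over $q\in\mathcal P$. Positivity of $u$ then gives, via Lemma~\ref{smoothness minimizers}, that $u$ solves~\eqref{positive BVP}.

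The step I expect to be the main obstacle is precisely the quantitative balancing of the second paragraph, and within it the estimate of the translation cost: this is where the hypothesis~\eqref{h2} that $p$ has an asymptotic average is used in an essential way, and where the uniformity over the class $\mathcal P$ matters. A secondary technical point is to verify that the surgered-and-rescaled function still belongs to a family to which Lemmas~\ref{approximation linear term} and~\ref{estimate forcing term} apply, i.e. that the uniform $L^1$ bound and the uniform simple-function approximation of Lemmas~\ref{bound norm minimizers} and~\ref{uniform simple} survive the surgery; this holds because the modifications are supported on sets of controlled size, but it has to be tracked. Everything else, namely the local bounds on $u$ near $t_0$ and the elementary algebra of the scaling factor $\bigl((b-a)/(b-a-2\theta)\bigr)^3$, should be routine given Lemmas~\ref{smoothness minimizers} and~\ref{bound norm minimizers} and Corollary~\ref{indietro su (a,b)}.
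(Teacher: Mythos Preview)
Your approach is plausible in outline, and if the bookkeeping is carried out carefully it should yield a proof; but it is genuinely different from what the paper does.

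The paper argues pointwise rather than variationally. Assuming an interior zero $t_0$ exists, it focuses on the positivity component $(a_j,b_j)$ containing a global maximum point $\tau$, so that $u(\tau)\ge C_1(b-a)^2$ by Corollary~\ref{lem:stime su norme}. If (say) $a_j>a$, then $\dot u(a_j^+)=0$ by Lemma~\ref{smoothness minimizers}. The decomposition $q=q_{1,\eps}+\dot q_{2,\eps}$ from Lemma~\ref{lem: decomposition} is then fed directly into the ODE $\ddot u=-g(u)+q$ and integrated: when $A(p)\le0$ one immediately obtains $\ddot u\le \eps/2+\dot q_{2,\eps}$ on $(a_j,b_j)$, whence $u(\tau)\le \eps(b-a)^2+2M_\eps(b-a)$, contradicting the lower bound once $\eps<C_1$ and $b-a$ is large. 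When $A(p)>0$ there is a short preliminary phase (three steps controlling the time and the exit velocity with which $u$ leaves the strip $\{u\le g^{-1}(A(p)-3\eps/2)\}$), after which the same integration gives $u(\tau)\le\eps(b-a)^2+C(b-a)$. No competitor, no surgery, no rescaling.

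Your route trades this ODE analysis for a cut--translate--rescale competitor and a balance of cubic errors. This is heavier: you must (i) bound the local cost of the window and of the matching tent, (ii) bound the translation cost via Lemma~\ref{lem: decomposition}, (iii) check that the surgered and rescaled $\hat{\tilde u}$ inherits the uniform Lipschitz and $L^1$ bounds so that Lemmas~\ref{approximation linear term} and~\ref{estimate forcing term} still apply, and (iv) close the inequality $J(v)<J(u)$ by choosing $c_0$, then $\eps',\eps''$, then $\tilde L$, in the right order. None of these steps looks fatal, but each needs care (in particular (iii), since the Lipschitz constant of $\hat{\tilde u}$ picks up a factor $(1-2c_0)^{-1}$ and the thresholds $L_2,L_3$ must be taken for the enlarged family). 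The paper's argument sidesteps all of this by working with the equation itself; what your approach might buy is a more ``scaling-robust'' proof that does not split into the cases $A(p)>0$ and $A(p)\le0$.
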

\begin{proof}
For $q \in \mathcal{P}$, let
\[
\{t \in (a,b): u(t;a,b;q)>0\} = \bigcup_{i \in I} (a_i,b_i),
\]
where $I$ is a family of indexes and $u(t;a,b;q)>0$ for $t \in (a_i,b_i)$ (thus the $(a_i,b_i)$ are disjoint intervals). By continuity, there exists $j \in I$ such that in $(a_j,b_j)$ there exists a point $\tau$ of global maximum for $u(\cdot;a,b;q)$. By Corollary \ref{lem:stime su norme}, we know that $u(\tau;a,b;q) \ge C_1 (b-a)^2$ whenever $b-a \ge L_4$, for every $q \in \mathcal{P}$. Assume by contradiction that $(a_j,b_j) \neq (a,b)$; say, for instance, $a_j>a$. In order to obtain a contradiction, we consider separately the cases $A(p)>0$ or $A(p)\leq 0$.

\paragraph{The case $A(p) > 0$.} We choose $0<\eps<\min\left\{C_1,2A(p)/3\right\}$, where we recall that $C_1$ has been defined in Corollary \ref{lem:stime su norme}, and we consider the decomposition of Lemma \ref{lem: decomposition} for the forcing term $q$. By the monotonicity of $g$, assumption \eqref{h1}, there exists $s_\eps:= g^{-1}\left(A(p)-3\eps/2 \right)$. Assuming $b-a$ sufficiently large in such a way that $u(\tau)>s_\eps$ we
can introduce
\[
\begin{split}
T &:= \inf \left\{ \bar t> a_j: u(t;a,b;q)>s_\eps \text{ for every $t \in (\bar t, \tau)$}\right\},\\
a'&:= \inf \left\{\bar t\leq T: \dot{u}(t;a,b;q)\geq 0 \text{ for every $t \in [\bar t,T]$} \right\}
\end{split}
\]
(in particular, if $\dot{u}(T;a,b;q)=0$ then $a':=T$). Note that, by definition,
\begin{equation}\label{eq8}
\begin{cases}
0 \le u(t;a,b;q) \le s_\eps & \text{if $t \in [a',T]$} \\
u(t;a,b;q) \ge s_\eps & \text{if $t \in [T,\tau]$}.
\end{cases}
\end{equation}
As $u(\cdot\,;a,b;q) \in \mathcal{C}^1(a,b)$, $a'\geq a_j >a$ necessarily implies $\dot{u}(a';a,b;q)=0$. As a consequence, if we reach a contradiction, we deduce that both $a' = a_j = a$ and $\dot{u}(a;a,b;q)>0$.

\paragraph{Step 1)} \emph{there exists $C_2>0$ independent of $q \in \mathcal{P}$ such that $T-a' \le C_2$}.\\
By the monotonicity of $g$ and \eqref{eq8}, we deduce that, for every $t \in (a',T)$,
\begin{align*}
\ddot{u}(t;a,b;q) &= -g(u(t;a,b;q))+q_{1,\eps}(t) + \dot{q}_{2,\eps}(t) \\ & \ge -g(s_\eps)+ A(p)-\frac{\eps}{2} +\dot{q}_{2,\eps}(t) =\eps + \dot{q}_{2,\eps}(t).
\end{align*}
By integrating twice in $(a',t)$, and using the fact that $\dot{u}(a';a,b;q)=0$, we obtain
\[
s_\eps \ge u(T;a,b;q) - u(a';a,b;q) \ge \frac{\eps}{2}(T-a')^2 - 2 M_\eps (T-a')  ,
\]
which provides the desired estimate.

\paragraph{Step 2)} \emph{There exists $C_3>0$ independent of $q \in \mathcal{P}$ such that $\dot{u}(T;a,b;q) \le C_3$}. \\
As $g(s) \ge 0$ for $s \ge 0$, we see that, for every $t \in (a',T)$,
\begin{align*}
\ddot{u}(t;a,b;q) &= -g(u(t;a,b;q))+q_{1,\eps}(t) + \dot{q}_{2,\eps}(t) \\ & \le A(p) + \frac{\eps}{2} +\dot{q}_{2,\eps}(t).
\end{align*}
By integrating in $(a',T)$, we deduce that
\[
\dot{u}(T;a,b;q) \le \left(A(p) + \frac{\eps}{2}\right)(T-a') + 2M_\eps \le C_3,
\]
where we use the first step and the fact that $\dot{u}(a';a,b;q)=0$.

\paragraph{Step 3)} \emph{Conclusion of the proof in case $A(p)>0$}.\\
By the monotonicity of $g$ (assumption \eqref{h1}) and \eqref{eq8}, we deduce that, for every $t \in (T,\tau)$,
\begin{align*}
\ddot{u}(t;a,b;q) &= -g(u(t;a,b;q))+q_{1,\eps}(t) + \dot{q}_{2,\eps}(t) \\ & \le -g(s_\eps) + A(p) +\frac{\eps}{2} +\dot{q}_{2,\eps}(t) =2\eps + \dot{q}_{2,\eps}(t).
\end{align*}
By integrating twice in $(T,t)$ and evaluating in $\tau$, we deduce
\begin{align*}
u(\tau;a,b;q) &\le \eps (b-a)^2 + \left(\dot{u}(T;a,b;q) + 2 M_\eps \right) (b-a) + u(T;a,b;q) \\
& \le \eps(b-a)^2 + \left(C_3 +2 M_\eps\right)(b-a) + s_\eps,
\end{align*}
where we used the result of the previous step and the definition of $T$. The choice $\eps<C_1$ gives a contradiction with Corollary \ref{lem:stime su norme} for $b-a$ sufficiently large (greater than a constant $\tilde L$ depending only on $\mathcal{P}$ and not on the particular choice of $q$).

\paragraph{The case $A(p)\leq0$.} We choose $0<\eps< C_1$, where we recall that $C_1$ has been defined in Corollary \ref{lem:stime su norme}, and consider the decomposition of Lemma \ref{lem: decomposition} for the forcing term $q$. For every $t \in (a_j,b_j)$ we have
\[
\ddot{u}(t;a,b;q)  = -g\left(u(t;a,b;q)\right) +q_{1,\eps}(t) + \dot{q}_{2,\eps}(t) \le \frac{\eps}{2} + \dot{q}_{2,\eps}(t),
\]
where we used the fact that $g(s) \ge 0$ for $s \ge 0$. By integrating twice in $(a_j,t)$ with $t \in (a_j,b_j)$, and evaluating in $\tau$,  we obtain
\[
u(\tau;a,b;q) \le \eps(b-a)^2 + 2 M_\eps (b-a).
\]
Having chosen $\eps<C_1$, this immediately contradicts Corollary \ref{lem:stime su norme} for $b-a$ sufficiently large.
\end{proof}

For the results of the next sections it is important to prove the uniqueness of the minimizer of the functional $J_{(a,b),q}$ with $q \in \mathcal{P}$. In light of the previous and the next statements, this uniqueness is guaranteed provided $b-a>\tilde L$. In the following proposition the forcing term $p$ is fixed; therefore, we will use the simplified notation of the previous section.

\begin{proposition}[Uniqueness]\label{prop:uniqueness}
Let $u$ and $v$ be functions in $\mathcal{C}^2(a,b) \cap H_0^1(a,b)$ such that $u>0$ and $v>0$ in $(a,b)$. Assume that
\[
J_{(a,b)}(u)=J_{(a,b)}(v) = \varphi^+(a,b).
\]
Then $u \equiv v$ in $[a,b]$.
\end{proposition}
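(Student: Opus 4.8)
The plan is to exploit the strict concavity of $g$ on $(0,+\infty)$ — which is where both $u$ and $v$ live — to show that the functional $J_{(a,b)}$ is strictly convex along the segment joining any two positive competitors, forcing the minimizer to be unique. First I would write down, for $u,v>0$ in $(a,b)$ and $\lambda\in(0,1)$, the convexity defect
\[
D(\lambda):=\lambda J_{(a,b)}(u)+(1-\lambda)J_{(a,b)}(v)-J_{(a,b)}(\lambda u+(1-\lambda)v).
\]
The quadratic Dirichlet term and the linear term $\int p\,u$ contribute: the Dirichlet part gives $\lambda(1-\lambda)\tfrac12\int_a^b(\dot u-\dot v)^2\,dt\ge 0$, and the linear part cancels exactly. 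Hence $D(\lambda)=\lambda(1-\lambda)\tfrac12\|u-v\|^2+\int_a^b\bigl[G(\lambda u+(1-\lambda)v)-\lambda G(u)-(1-\lambda)G(v)\bigr]\,dt$. Since $g$ is strictly increasing with $g(0)=0$, on $(0,+\infty)$ it is positive; being the integral of a strictly increasing positive function, $G$ is strictly convex on $[0,+\infty)$ (indeed $G''=g'>0$ there). Therefore the integrand in the last term is $\le 0$ pointwise, with equality at a point only if $u=v$ there. Consequently $D(\lambda)\le \lambda(1-\lambda)\tfrac12\|u-v\|^2$, which is the wrong direction for strict convexity — so a naive convexity argument does not close, and this sign clash is exactly where the real work lies.

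To get around this I would not use convexity of $J$ directly but rather a comparison/uniqueness argument at the level of the Euler--Lagrange ODE, using that $u$ and $v$ are both classical solutions of $\ddot w+g(w)=p(t)$ on $(a,b)$ with $w(a)=w(b)=0$. Subtract the equations: $w:=u-v$ satisfies $\ddot w+\bigl(g(u)-g(v)\bigr)=0$, i.e. $\ddot w + c(t)w=0$ where $c(t)=\int_0^1 g'(v+\theta(u-v))\,d\theta>0$ by \eqref{h1}. Thus $w$ solves a linear second-order equation with a \emph{positive} potential and vanishes at both endpoints $a,b$. The strategy is then to rule out a nontrivial such $w$. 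Multiplying by $w$ and integrating by parts gives $\int_a^b \dot w^2\,dt = \int_a^b c(t)w^2\,dt$; this alone is not a contradiction, so I would instead argue via the energy/shooting structure: the key extra input is that $u$ and $v$ are both \emph{minimizers} with the same value $\varphi^+(a,b)$, so in fact the whole segment between them is a path of minimizers if $J$ were convex — the failure of convexity above suggests that one should instead use strict concavity of $g$ to show that a convex combination can be strictly \emph{lowered}, contradicting minimality unless $u\equiv v$.

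Concretely, here is the step I expect to actually work and which I would push through. Consider $w:=\tfrac{u+v}{2}$; then $w>0$ and by the computation above $J_{(a,b)}(w)=\tfrac12 J(u)+\tfrac12 J(v)-\tfrac18\|u-v\|^2-\int_a^b\bigl[G(w)-\tfrac12G(u)-\tfrac12G(v)\bigr]\,dt$. The first correction pulls the value down; the second, by strict convexity of $G$ on $[0,+\infty)$, pushes it up. I would then show the \emph{down} term dominates by using the equation: since $u,v$ solve the same ODE, $u-v$ solves $\ddot{(u-v)}=-(g(u)-g(v))$, and an $L^\infty$--$L^2$ estimate combined with the concavity of $g$ (which controls $G(w)-\tfrac12 G(u)-\tfrac12 G(v)$ from above by a multiple of $(u-v)^2$ with a constant involving $\sup g'$ on the relevant range) gives the desired strict inequality $J_{(a,b)}(w)<\varphi^+(a,b)$ whenever $u\not\equiv v$ — the contradiction. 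The main obstacle, and the place requiring care, is getting the constants in these two competing quadratic-in-$(u-v)$ estimates to line up so that the Dirichlet gain genuinely beats the concavity loss; this is presumably where the hypothesis that $b-a>\tilde L$ (hence $\|u\|_\infty,\|v\|_\infty\ge C_1(b-a)^2$ is large, so that $u,v$ spend most of their mass in the region where $g$ is nearly flat, making $\sup g'$ over the effective range small) enters decisively, via Corollary \ref{lem:stime su norme} and Remark \ref{su L1}.
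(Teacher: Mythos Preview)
Your proposal does not close, and the obstruction is structural rather than a matter of constants. In the midpoint comparison you write
\[
J_{(a,b)}\!\left(\tfrac{u+v}{2}\right)-\varphi^+(a,b)=-\tfrac18\int_a^b(\dot u-\dot v)^2+\int_a^b\Bigl[\tfrac12G(u)+\tfrac12G(v)-G\bigl(\tfrac{u+v}{2}\bigr)\Bigr].
\]
But $u$ and $v$ are both solutions of $\ddot w+g(w)=p$, so $w:=u-v$ satisfies $\ddot w+c(t)w=0$ with $c(t)=\dfrac{g(u)-g(v)}{u-v}\in g'([u\wedge v,u\vee v])$, and hence $\int(\dot u-\dot v)^2=\int c(t)(u-v)^2$. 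On the other side, the Taylor remainder gives $\tfrac12G(u)+\tfrac12G(v)-G(\tfrac{u+v}{2})=\tfrac18\,\widetilde c(t)(u-v)^2$ with $\widetilde c(t)$ another average of $g'$ over the same interval $[u\wedge v,u\vee v]$. So the ``Dirichlet gain'' and the ``convexity loss'' are both of the exact form $\tfrac18\int g'(\xi_t)(u-v)^2$ with $\xi_t$ trapped between $u(t)$ and $v(t)$; no largeness of $b-a$ will make one systematically beat the other. In particular, the hoped-for inequality $J(\tfrac{u+v}{2})<\varphi^+$ is not forced. Note also that the proposition carries \emph{no} hypothesis $b-a>\tilde L$; the paper's argument works on any $(a,b)$ admitting two positive minimizers.

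What you are missing is that the relevant structural input is the strict concavity of $g$ itself on $(0,+\infty)$, not the convexity of $G$. The paper looks at $\Phi(\lambda)=J_{(a,b)}((1-\lambda)u+\lambda v)$; equality $\Phi(0)=\Phi(1)$ yields an interior critical point $\bar\lambda$, and combining $\Phi'(0)=\Phi'(1)=\Phi'(\bar\lambda)=0$ produces the identity
\[
\int_a^b\Bigl[(1-\bar\lambda)g(u)+\bar\lambda g(v)-g\bigl((1-\bar\lambda)u+\bar\lambda v\bigr)\Bigr](v-u)=0.
\]
Strict concavity of $g$ makes the bracket strictly negative wherever $u\neq v$, so the integral forces $v-u$ to change sign (or $u\equiv v$). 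At a crossing point $\tau$ one glues $u|_{(a,\tau)}$ with $v|_{(\tau,b)}$; the glued function is again a positive minimizer, hence a $\mathcal C^2$ solution, which gives $\dot u(\tau)=\dot v(\tau)$ and then $u\equiv v$ by uniqueness for the initial value problem. This two-step mechanism (concavity of $g$ $\Rightarrow$ sign change; gluing $\Rightarrow$ matching Cauchy data) is the idea your attempts do not reach.
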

\begin{proof}
Let us consider the function
\[
\Phi(\lambda):= J_{(a,b)}((1-\lambda) u + \lambda v).
\]
We note that $\Phi \in \mathcal{C}^1(\R)$ and
\[
\Phi'(\lambda)= dJ_{(a,b)}( (1-\lambda) u + \lambda v)[v-u].
\]
As $\Phi(0) = \Phi(1)$, there exists $\bar \lambda \in (0,1)$ such that $\Phi'(\bar \lambda)=0$, that is,
\begin{equation}\label{eq4}
\int_a^b \left[\left(1-\bar \lambda\right) \dot u+\bar \lambda \dot v\right](\dot v- \dot u) - g\left(\left(1-\bar \lambda\right) u+ \bar \lambda v\right)(v-u) + p (v-u) = 0.
\end{equation}
Also, by minimality we know that $\Phi'(0)=\Phi'(1)=0$, that is
\begin{align}
& \int_a^b \dot u(\dot v-\dot u) -g(u)(v-u)+p(v-u)=0  \label{eq5}\\
& \int_a^b \dot v(\dot v-\dot u) -g(v)(v-u) + p(v-u) = 0. \label{eq6}
\end{align}
If we consider \eqref{eq4} and subtract $(1-\bar \lambda)$ times  \eqref{eq5} and $\bar \lambda$ times  \eqref{eq6}, we obtain
\begin{equation}\label{eq7}
\int_a^b \left[ \left(1-\bar \lambda\right) g(u) + \bar \lambda g(v) - g\left(\left(1-\bar \lambda\right) u + \lambda v\right) \right](v-u)=0.
\end{equation}
We claim that
\begin{equation}\label{claim 1}
\text{either $u \equiv v$ or the function $v-u$ changes sign in $(a,b)$}.
\end{equation}
Indeed, assume $u \not \equiv v$ and, w.l.o.g., $v \ge u$ in $(a,b)$. The set $A:= \{t \in (a,b): v(t)>u(t)\}$ is not empty and has positive measure. Hence, by \eqref{eq7} and the strict concavity of $g$ in $(0,+\infty)$, assumption \eqref{h1}, we deduce that
\begin{align*}
0&=\int_a^b \left[ \left(1-\bar \lambda\right) g(u) + \bar \lambda g(v) - g\left(\left(1-\bar \lambda\right) u + \lambda v\right) \right](v-u) \\
&= \int_A \left[ \left(1-\bar \lambda\right) g(u) + \bar \lambda g(v) - g\left(\left(1-\bar \lambda\right) u + \lambda v\right) \right](v-u) < 0,
\end{align*}
a contradiction. This proves the claim \eqref{claim 1}, so that it remains to show that $v-u$ cannot change sign in $(a,b)$. By contradiction again, assume that $v-u$ changes sign in $(a,b)$, so that in particular there exists $\tau$ in $(a,b)$ such that $u(\tau)=v(\tau)$. Say, for instance, that
\[
\int_a^\tau \left( \frac{1}{2}\dot{u}^2- G(u) + p u\right) \le \int_a^\tau \left( \frac{1}{2}\dot{v}^2- G(v) + pv\right);
\]
necessarily it results
\[
\int_\tau^b \left( \frac{1}{2}\dot{u}^2- G(u) + p u\right) \ge \int_\tau^b \left( \frac{1}{2}\dot{v}^2- G(v) + pv\right).
\]
Let
\[
\tilde u(t):= \begin{cases} u(t) & \text{if $t \in (a,\tau)$} \\
v(t) & \text{if $t \in [\tau,b)$}. \end{cases}
\]
By definition $\tilde u \in H_0^1(a,b)^+$, $\tilde u>0$ in $(a,b)$ and $J_{(a,b)}(\tilde u) \le J_{(a,b)}(u)= \varphi^+(a,b)$, that is, $\tilde u$ is a minimizer of $J_{(a,b)}$ in $H_0^1(a,b)^+$ which is strictly positive in $(a,b)$; hence, it solves the boundary value problem \eqref{positive BVP} and has to be of class $\mathcal{C}^2(a,b)$. This implies that $\dot u(\tau)=\dot v(\tau)$, and recalling that $u(\tau)= v(\tau)$, we can apply the uniqueness theorem for the initial value problems, proving that $u \equiv v$ in $(a,b)$.
\end{proof}

Let $p \in \mathcal{P}$, and let $\mathcal{P}$ be defined by \eqref{def di P}. Collecting together the results of Propositions \ref{prop:existence} and \ref{prop:uniqueness}, we can conclude that there exists $\tilde L>0$ such that for every $(a,b)\subset \R$ with $b-a \ge \tilde L$ and for every $q \in \mathcal{P}$ there exists a unique minimizer $u(\cdot\,;a,b;q)$ of the functional $J_{(a,b),q}$ in $H_0^1(a,b)^+$, which is strictly positive in $(a,b)$ and hence solves problem \eqref{positive BVP} with forcing term $q$. It is then possible to define a map which associates to each triple $(a,b,q)$, with $b-a \ge \tilde L$ and $q \in \mathcal{P}$, the unique minimizer $u(\cdot;a,b;q)$. We conclude this section proving that this map is continuous.

\begin{lemma}\label{lem: continuous dependence}
Let $p$ satisfy \eqref{h2}, and let $\mathcal{P}$ be defined by \eqref{def di P}. Let $A$ and $B$ be fixed and let
\[
 \mathcal I := \left\{(t,a,b)\in \R^3: b-a > \tilde L, \  A  < a \le t \le b < B\right\},
\]
where $\tilde L$ has been defined in Proposition \ref{prop:existence}. Let us consider the metric space $\mathcal{P}$ endowed with the distance $d(q_1,q_2)= \|q_1-q_2\|_{L^2(A,B)}$. The map
\[
 (t,a,b,q) \in \overline{\mathcal{I}} \times \mathcal{P} \mapsto \left(u(t;a,b;q), \dot{u}(t;a,b;q)\right) \in \R^2
\]
is continuous.
\end{lemma}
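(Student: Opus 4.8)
The plan is to prove continuity by a sequential argument combined with the uniqueness and a priori bounds established earlier. Fix a sequence $(t_n,a_n,b_n,q_n) \in \overline{\mathcal{I}} \times \mathcal{P}$ converging to $(t_0,a_0,b_0,q_0)$; I must show that $u(t_n;a_n,b_n;q_n) \to u(t_0;a_0,b_0;q_0)$ and $\dot u(t_n;a_n,b_n;q_n) \to \dot u(t_0;a_0,b_0;q_0)$. First I would rescale: set $\wh u_n := \wh u(\cdot\,;a_n,b_n;q_n)$, the minimizer of $\wh J_{(a_n,b_n),q_n}$ on $H^1_0(0,1)^+$, so that $u(t;a_n,b_n;q_n) = (b_n-a_n)^2 \wh u_n((t-a_n)/(b_n-a_n))$ and a parallel formula holds for the derivative. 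By Lemma \ref{bound norm minimizers} the family $\{\wh u_n\}$ is bounded in $L^\infty(0,1)$ together with its derivatives, hence bounded in $H^1_0(0,1)$ and equi-Lipschitz. By Ascoli--Arzel\`a and reflexivity of $H^1_0$ we may extract a subsequence with $\wh u_n \to w$ uniformly on $[0,1]$ and weakly in $H^1_0(0,1)$, for some $w \in H^1_0(0,1)^+$.

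**Identifying the limit.** The key step is to show $w = \wh u(\cdot\,;a_0,b_0;q_0)$. Here I would use two facts. On one hand, $\wh J_{(a_n,b_n),q_n} \to \wh J_{(a_0,b_0),q_0}$ in an appropriate sense: the forcing term enters through $\wh q_{n}(t) = q_n(a_n + t(b_n-a_n))$, and since $a_n \to a_0$, $b_n-a_n \to b_0-a_0 > \tilde L$, and $q_n \to q_0$ in $L^2(A,B)$, a change of variables shows $\wh q_n \to \wh q_0$ in $L^2(0,1)$ (using that all intervals sit inside $(A,B)$); the $G$-term converges by uniform boundedness of $\wh u_n$ and continuity of $G$. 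Weak lower semicontinuity of the Dirichlet integral then gives $\wh J_{(a_0,b_0),q_0}(w) \le \liminf_n \wh J_{(a_n,b_n),q_n}(\wh u_n)$. On the other hand, testing the minimality of $\wh u_n$ against the competitor $w$ (more precisely, against $w$ rescaled back and forth, which stays admissible since $w \ge 0$), one gets $\wh J_{(a_n,b_n),q_n}(\wh u_n) \le \wh J_{(a_n,b_n),q_n}(w) \to \wh J_{(a_0,b_0),q_0}(w)$. Combining, $w$ is a minimizer of $\wh J_{(a_0,b_0),q_0}$ on $H^1_0(0,1)^+$; by Proposition \ref{prop:existence} and Proposition \ref{prop:uniqueness} (applicable since $b_0-a_0 > \tilde L$), $w = \wh u(\cdot\,;a_0,b_0;q_0)$. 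Since the limit is independent of the subsequence, the full sequence $\wh u_n$ converges uniformly to $\wh u(\cdot\,;a_0,b_0;q_0)$.

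**Convergence of derivatives and conclusion.** It remains to upgrade uniform convergence of $\wh u_n$ to $\mathcal{C}^1$ convergence. Since $\wh u_n$ solves $\ddot{\wh u}_n + g((b_n-a_n)^2 \wh u_n) = \wh q_n$ in $(0,1)$ (Proposition \ref{prop:existence}), we have $\ddot{\wh u}_n = \wh q_n - g((b_n-a_n)^2\wh u_n)$ bounded in $L^2(0,1)$; together with the uniform bound on $\|\wh u_n\|$ this gives $\wh u_n$ bounded in $H^2(0,1)$, hence precompact in $\mathcal{C}^1([0,1])$, and the limit of any convergent subsequence of derivatives must be $\dot{\wh u}(\cdot\,;a_0,b_0;q_0)$ because the $\mathcal{C}^1$-limit agrees with the already-identified $H^1$-weak/uniform limit. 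Thus $\wh u_n \to \wh u(\cdot\,;a_0,b_0;q_0)$ in $\mathcal{C}^1([0,1])$. Finally, I would transfer back through the scaling relation \eqref{scaling}: since $(t_n-a_n)/(b_n-a_n) \to (t_0-a_0)/(b_0-a_0)$ and $b_n-a_n \to b_0-a_0$, and both $\wh u_n \to \wh u(\cdot\,;a_0,b_0;q_0)$ and $\dot{\wh u}_n \to \dot{\wh u}(\cdot\,;a_0,b_0;q_0)$ uniformly, continuity of the evaluation map gives $u(t_n;a_n,b_n;q_n) \to u(t_0;a_0,b_0;q_0)$ and likewise for the derivative. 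The main obstacle I anticipate is handling the dependence on $q$: one must check that the rescaled forcings $\wh q_n$ genuinely converge in $L^2(0,1)$ despite the moving endpoints, which requires combining the $L^2(A,B)$-convergence of $q_n$ with the convergence of $a_n,b_n$ via a uniform change-of-variables estimate; everything else is a fairly standard direct-method-plus-uniqueness compactness argument.
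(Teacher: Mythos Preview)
Your proof is correct and follows essentially the same route as the paper: rescale to $[0,1]$, use the uniform bounds of Lemma \ref{bound norm minimizers} to extract a convergent subsequence, identify the limit as the unique minimizer for the limiting data via Propositions \ref{prop:existence} and \ref{prop:uniqueness}, and upgrade to $\mathcal{C}^1$ convergence through the equation. The only minor difference is in the identification step: the paper first passes to the limit in the differential equation (obtaining $H^2$ convergence directly from convergence of the right-hand side) and then argues by contradiction using the strict inequality of Proposition \ref{prop:uniqueness} together with continuity of $\wh J$, whereas you use the more direct weak-lower-semicontinuity/competitor argument to show the limit is itself a minimizer; both are standard and equivalent here.
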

\begin{proof}
Let $(a_n,b_n,p_n) \to (a^*,b^*,p^*)$ in $\overline{\mathcal{I}} \times \mathcal{P}$. Thanks to the explicit relations \eqref{scaling}, we can consider the scaled functions $\wh{u}_n:=\wh{u}(\cdot\,; a_n,b_n;\wh{p}_n)$ and $\wh{u}^*:=\wh{u}(\cdot\,; a^*,b^*;\wh{p}^*)$. Having chosen $b-a > \tilde L$ and $(p_n) \subset \mathcal{P}$, from the previous results we deduce that each $\wh{u}_n$ solves problem \eqref{scaled problem} with $a_n$, $b_n$, $\wh{p}_n$ instead of $a$, $b$, $\widehat{p}$. By Lemma \ref{bound norm minimizers}, we know that the sequence $(\wh{u}_n)$ is bounded in $H_0^1(0,1)$, so that, up to a subsequence, it is weakly convergent in $H_0^1(0,1)$ to some $\widetilde{u} \in H_0^1(0,1)^+$. This, together with the fact that, up to a subsequence, $\wh{p}_n \to \wh{p}^*$ almost everywhere in $[0,1]$ (this follows from the convergence of $\wh{p}_n$ to $\wh{p}^*$ in $L^2(0,1)$), implies that $\wh{u}_n \to \widetilde{u}$ in $H^2(0,1) \cap H_0^1(0,1)$, and
\[
\begin{cases}
\ddot{\widetilde{u}}(t) + g\left((b^*-a^*)^2 \widetilde{u}(t) \right) = \widehat{p}^*(t) & t \in (0,1) \\
\widetilde{u}(0)=0=\widetilde{u}(1) \\
\widetilde{u}(t) \ge 0 & t \in (0,1).
\end{cases}
\]
We aim at proving that $\widetilde{u} \equiv \widehat{u}^*$; if this is not true, then the variational characterization of $\wh{u}^*$ and Proposition \ref{prop:uniqueness} imply that
\begin{equation}\label{rel minimizers}
\wh{J}_{(a^*,b^*), p^*}(\widehat{u}^*) < \wh{J}_{(a^*,b^*), p^*}(\widetilde{u}).
\end{equation}
By the continuity of $\widehat{J}$ with respect to $u$, $p$, $a$ and $b$, we have also
\begin{equation}\label{rel limting cont}
\wh{J}_{(a_n,b_n), p_n}(\widehat{u}_n) \to \wh{J}_{(a^*,b^*), p^*}(\widetilde{u}) \quad \text{and} \quad  \wh{J}_{(a_n,b_n), p_n}(\widehat{u}^*) \to  \wh{J}_{(a^*,b^*), p^*}(\widehat {u}^*).
\end{equation}
A comparison between \eqref{rel minimizers} and \eqref{rel limting cont} for $n$ sufficiently large gives a contradiction with the fact $\widehat{u}_n$ reaches the minimum of $\wh{J}_{(a_n,b_n), p_n}$ in $H_0^1(0,1)^+$, so that necessarily $\widetilde{u} \equiv \widehat{u}^*$. Since this argument holds for any subsequence, we deduce the convergence of the whole sequence, and to obtain the desired result it is sufficient to observe that, since $\wh{u}_n \to \widehat{u}^*$ in $H^2(0,1) \cap H_0^1(0,1)$, then $\wh{u}_n \to  \widehat{u}^*$ in $\mathcal{C}^1([0,1])$.
\end{proof}

\section{Non-degeneracy of positive minimizers}\label{sec: non deg}

Assume that $u$ solves \eqref{positive BVP} in $(a,b)$; we can consider the variational equation
\begin{equation}\label{variational eq}
\begin{cases}
\ddot{\psi}(t)+ g'(u(t))\psi(t)=0 & t \in (a,b) \\
\psi(a)=0=\psi(b).
\end{cases}
\end{equation}
\begin{definition}
We say that $u$ is \emph{non-degenerate} as solution of \eqref{positive BVP} if problem \eqref{variational eq} has only the trivial solution $\psi \equiv 0$ in $H^2(a,b) \cap H_0^1(a,b)$.
\end{definition}
The main result of this section is the following.
\begin{proposition}\label{prop: non degeneracy}
Let $p$ satisfy \eqref{h2}, $\mathcal{P}$ be defined by \eqref{def di P}, and $\tilde L$ be defined as in Proposition \ref{prop:existence}, and let us assume that $b-a \ge \tilde L$. The function $u(\cdot;a,b;p)$ is non-degenerate as solution of the boundary value problem \eqref{positive BVP}.
\end{proposition}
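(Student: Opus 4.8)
The plan is to argue by contradiction, assuming that there is a nontrivial solution $\psi \in H^2(a,b) \cap H_0^1(a,b)$ of the variational equation \eqref{variational eq} at $u = u(\cdot;a,b;p)$, and deriving a contradiction with the minimality of $u$. The key structural fact I would exploit is the strict concavity of $g$ in $(0,+\infty)$, assumption \eqref{h1}, which makes the second variation of $J_{(a,b)}$ at the \emph{positive} solution $u$ strictly \emph{negative} along directions of fixed sign, so that a degeneracy would produce a nearby competitor with strictly smaller action. Concretely, the second variation is
\[
d^2 J_{(a,b)}(u)[\varphi,\varphi] = \int_a^b \left( \dot\varphi^2 - g'(u)\varphi^2 \right)\,dt,
\]
and $\psi$ being a nontrivial solution of \eqref{variational eq} means precisely that $\psi$ is in the kernel of this quadratic form and, integrating by parts, that $d^2 J_{(a,b)}(u)[\psi,\psi]=0$.

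First I would establish sign information on $\psi$. Since $\psi$ solves the linear ODE $\ddot\psi + g'(u)\psi = 0$ with $\psi(a)=\psi(b)=0$, Sturm theory applies: either $\psi$ has an interior zero in $(a,b)$, or it does not. I would like to rule out interior zeros. Here one uses that $u$ itself, restricted to $(a,b)$, does \emph{not} solve the homogeneous variational equation (because $u$ solves the inhomogeneous equation $\ddot u + g(u) = p$ with $p \not\equiv 0$ in the relevant sense, and $u(a)=u(b)=0$ with $u>0$ inside), so $u$ and $\psi$ are not comparable in the naive way; instead, the cleanest route is to compare the first eigenvalue of the linearized operator $L\varphi := -\ddot\varphi - g'(u)\varphi$ on $H_0^1(a,b)$ with $0$. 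If $\lambda_1(L) > 0$ then $L$ is positive definite and \eqref{variational eq} has only the trivial solution, done; if $\lambda_1(L) < 0$, then the first eigenfunction $\varphi_1 > 0$ gives $d^2 J_{(a,b)}(u)[\varphi_1,\varphi_1] < 0$, and since $\varphi_1$ has fixed sign I can perturb $u$ by $\pm \varepsilon \varphi_1$ staying in $H_0^1(a,b)^+$ (for $\varepsilon$ small, using $u>0$ in the interior and $\dot u(a^+)>0$, $\dot u(b^-)<0$ from Lemma \ref{smoothness minimizers}-type boundary behaviour) to lower the action, contradicting minimality of $u = u(\cdot;a,b;p)$. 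So the only surviving case is $\lambda_1(L) = 0$, i.e. the degeneracy itself, and $\psi = \varphi_1 > 0$ up to sign.

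It remains to exclude $\lambda_1(L)=0$. This is where strict concavity enters decisively, and I expect it to be the main obstacle — the rest is soft. With $\psi > 0$ in $(a,b)$ at hand, I would test the \emph{first-order} minimality condition $dJ_{(a,b)}(u)[\psi] = 0$, i.e. $\int_a^b (\dot u \dot\psi - g(u)\psi + p\psi) = 0$, against the variational identity $\int_a^b(\dot u\dot\psi - g'(u)u\psi)=0$ obtained by multiplying \eqref{variational eq} by $u$ and integrating by parts (using $\psi(a)=\psi(b)=0$, $u(a)=u(b)=0$). Subtracting, and using $\ddot u = -g(u)+p$, one gets a relation of the form $\int_a^b \big(g'(u)u - g(u)\big)\psi = 0$. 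But by strict concavity of $g$ on $(0,+\infty)$ with $g(0)=0$ we have $g'(s)s - g(s) < 0$ for all $s>0$ (the chord from the origin lies above the tangent), and $\psi>0$ in $(a,b)$, so the integrand is strictly negative on a set of positive measure — a contradiction. Hence $\lambda_1(L) \ne 0$, and combined with the previous step ($\lambda_1(L) > 0$ is forced, as $\lambda_1(L)<0$ contradicts minimality), problem \eqref{variational eq} has only the trivial solution. The care needed in the ``main obstacle'' step is just to make sure the boundary terms in all the integrations by parts genuinely vanish, which they do since $u,\psi \in H_0^1(a,b)$ and both are $\mathcal{C}^2$ up to the boundary by Proposition \ref{prop:existence} and elliptic regularity; and to confirm $g \in \mathcal{C}^2$ is what legitimizes writing $g'(u)$ and the variational equation in the first place, matching the remark in the introduction that $\mathcal{C}^2$ is used exactly here.
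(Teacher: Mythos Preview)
Your reduction to the first eigenvalue of $L\varphi = -\ddot\varphi - g'(u)\varphi$ is sound, and the case $\lambda_1(L)<0$ is handled correctly by perturbing with the positive first eigenfunction. The gap is in the key step, ruling out $\lambda_1(L)=0$. Subtracting your two identities does \emph{not} give $\int_a^b (g'(u)u - g(u))\psi = 0$; it gives
\[
\int_a^b \big(g'(u)u - g(u)\big)\psi \,dt = -\int_a^b p\,\psi\,dt,
\]
and there is no sign information on $\int_a^b p\psi$ (indeed $p$ is only assumed bounded with prescribed asymptotic average, and may change sign freely). So the first-order concavity inequality $g'(s)s<g(s)$ is not enough to close the argument when $p\not\equiv 0$. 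The phrase ``using $\ddot u = -g(u)+p$'' does not remove the $p$-term: after the integrations by parts no $\ddot u$ remains.

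What actually works at $\lambda_1(L)=0$ is a third-order argument using $g''$, i.e.\ strict concavity in its differential form. With $\psi>0$ the first eigenfunction, one has $dJ(u)[\psi]=0$ and $d^2J(u)[\psi,\psi]=0$, while
\[
d^3 J_{(a,b)}(u)[\psi,\psi,\psi] = -\int_a^b g''(u)\,\psi^3\,dt > 0
\]
because $g''<0$ on $(0,+\infty)$ and $u,\psi>0$ in $(a,b)$. Hence $J(u+\varepsilon\psi)=J(u)+\tfrac{\varepsilon^3}{6}d^3J(u)[\psi,\psi,\psi]+O(\varepsilon^4)<J(u)$ for small $\varepsilon<0$, and $u+\varepsilon\psi\in H_0^1(a,b)^+$ for $|\varepsilon|$ small (this uses $\dot u(a^+)>0$, $\dot u(b^-)<0$, which do follow from Proposition~\ref{prop:existence} but deserve a word of justification). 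This contradicts minimality and recovers your conclusion.

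The paper's own proof is organized differently: it packages exactly this third-order information through the Ambrosetti--Prodi singularity theory. One shows that a degenerate $u$ would be an \emph{ordinary singular} point of $\mathcal{F}(u)=-\ddot u - g(u)$, the nondegeneracy condition in that definition being precisely $\int_a^b g''(u)\psi_0^3\neq 0$; the local fold structure then says $\mathcal{F}(v)=p+\varepsilon q$ has no solution near $u$ for one sign of $\varepsilon$, contradicting the continuous dependence (Lemma~\ref{lem: continuous dependence}) of minimizers on the forcing term within $\mathcal{P}$. Either route ultimately rests on $g''<0$, not on $g'(s)s<g(s)$; this is exactly where the $\mathcal{C}^2$ hypothesis on $g$ is used, as the introduction remarks.
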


For the proof, we will use some known results in singularity theory, which we recall here and for which we refer to Section 3.2 of the book by Ambrosetti and Prodi \cite{AmPr}.

\begin{definition}\label{def: singular}
Let $\Phi:\Omega \subset E \to F$ be of class $\mathcal{C}^2(\Omega)$, where $\Omega$ is open, $E$ and $F$ are Banach spaces and $u_0 \in \Omega$. We say that $u_0$ is \emph{singular} if $d\Phi(u_0)$ is not invertible. It is \emph{ordinary singular} if it is singular and
\begin{itemize}
\item[($i$)] $\Ker\left(d\Phi (u_0)\right)$ is one-dimensional:
\[
\Ker \left(d\Phi (u_0)\right) ) = \R \psi_0 \quad \text{for some $\psi_0 \in E \setminus \{0\}$};
\]
$\im \left(d\Phi (u_0)\right)$ is closed and has codimension $1$:
\[
\im\left(d\Phi (u_0)\right) = \left\{ q \in F: \langle \gamma_0,q \rangle = 0 \right\} \quad \text{with $\gamma_0 \in F^* \setminus \{0\}$}.
\]
\item[($ii$)] $\langle \gamma_0, d^2 \Phi(u_0)[\psi_0,\psi_0] \rangle \neq 0$.
\end{itemize}
\end{definition}

\begin{theorem}[Ambrosetti-Prodi]\label{thm: on ordinary singular}
Let $u_0$ be an ordinary singular point for $\Phi$, and, say,
\[
\langle \gamma_0, d^2 \Phi(u_0)[\psi_0,\psi_0] \rangle > 0;
\]
let $q_0=\Phi(u_0)$, and let $q \in F$ be such that $\langle \gamma_0, q \rangle >0$. Then there exists a neighbourhood $U$ of $u_0$ in $E$ and a positive number $\eps^*$ such that the equation
\[
\Phi(u)=q_0+\eps q, \qquad u \in U
\]
has exactly two solutions for $0<\eps<\eps^*$ and no solution for $-\eps^*<\eps<0$.
\end{theorem}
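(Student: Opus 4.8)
The statement is the classical result on ordinary singular points, and the natural plan is a \emph{Lyapunov--Schmidt reduction} of the equation $\Phi(u)=q_0+\eps q$ to a scalar equation in one real parameter, followed by an elementary one-variable analysis. First I would fix the splittings dictated by the hypotheses: write $E=\R\psi_0\oplus E_1$ with $E_1$ a closed complement of $\Ker\bigl(d\Phi(u_0)\bigr)$, and $F=\im\bigl(d\Phi(u_0)\bigr)\oplus\R v_0$, where $v_0$ is chosen so that $\langle\gamma_0,v_0\rangle=1$ (possible since $\gamma_0\neq0$ annihilates $\im\bigl(d\Phi(u_0)\bigr)$, which is closed of codimension one). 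Let $Pw:=w-\langle\gamma_0,w\rangle v_0$ be the projection of $F$ onto $\im\bigl(d\Phi(u_0)\bigr)$ along $\R v_0$. Seeking $u=u_0+s\psi_0+w$ with $s\in\R$, $w\in E_1$, the equation $\Phi(u)=q_0+\eps q$ is equivalent to the pair
\[
P\bigl[\Phi(u_0+s\psi_0+w)-q_0-\eps q\bigr]=0,\qquad
\bigl\langle\gamma_0,\ \Phi(u_0+s\psi_0+w)-q_0-\eps q\bigr\rangle=0 .
\]

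For the first (``auxiliary'') equation the key point is that the partial differential with respect to $w$ of its left-hand side at $(s,w,\eps)=(0,0,0)$ is $P\,d\Phi(u_0)|_{E_1}\colon E_1\to\im\bigl(d\Phi(u_0)\bigr)$, and this is an isomorphism: it is injective because $d\Phi(u_0)w\in\Ker P\cap\im\bigl(d\Phi(u_0)\bigr)=\{0\}$ forces $w\in\Ker\bigl(d\Phi(u_0)\bigr)\cap E_1=\{0\}$, and surjective because any $r\in\im\bigl(d\Phi(u_0)\bigr)$ equals $d\Phi(u_0)w$ for a suitable $w\in E_1$, whence $P\,d\Phi(u_0)w=Pr=r$. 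The implicit function theorem then provides a $\mathcal{C}^2$ map $(s,\eps)\mapsto w(s,\eps)$, defined near $(0,0)$ with $w(0,0)=0$, solving the auxiliary equation; inserting it into the second equation reduces the problem, locally near $u_0$, to the scalar ``bifurcation equation''
\[
\beta(s,\eps):=\bigl\langle\gamma_0,\ \Phi\bigl(u_0+s\psi_0+w(s,\eps)\bigr)-q_0-\eps q\bigr\rangle=0 .
\]

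Next I would compute the $2$-jet of the $\mathcal{C}^2$ function $\beta$ at the origin. One has $\beta(0,0)=0$ since $\Phi(u_0)=q_0$. Differentiating the auxiliary equation in $s$ at the origin and using $d\Phi(u_0)\psi_0=0$ gives $P\,d\Phi(u_0)\,\partial_s w(0,0)=0$, hence $\partial_s w(0,0)=0$ by the isomorphism just established; consequently $\partial_s\beta(0,0)=\langle\gamma_0,d\Phi(u_0)\psi_0\rangle=0$. Since $\langle\gamma_0,\cdot\rangle$ kills $\im\bigl(d\Phi(u_0)\bigr)$, the chain rule yields $\partial_\eps\beta(0,0)=-\langle\gamma_0,q\rangle<0$ and $\partial_{ss}\beta(0,0)=\langle\gamma_0,d^2\Phi(u_0)[\psi_0,\psi_0]\rangle>0$, the latter precisely because $\partial_s w(0,0)=0$ suppresses all mixed terms. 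Because $\partial_\eps\beta(0,0)\neq0$, a second application of the implicit function theorem solves $\beta(s,\eps)=0$ near $(0,0)$ uniquely in the form $\eps=\eta(s)$ for a $\mathcal{C}^2$ function $\eta$ on an interval $(-\rho,\rho)$ with $\eta(0)=0$; implicit differentiation gives $\eta'(0)=-\partial_s\beta(0,0)/\partial_\eps\beta(0,0)=0$ and $\eta''(0)=-\partial_{ss}\beta(0,0)/\partial_\eps\beta(0,0)>0$.

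Thus $\eta$ has a strict local minimum at $0$ with value $0$; after shrinking $\rho$ so that $\eta''>0$ on $(-\rho,\rho)$, the function $\eta$ is strictly decreasing on $(-\rho,0]$ and strictly increasing on $[0,\rho)$, with $\eta>0$ off the origin. Setting $\eps^*:=\min\{\eta(-\rho/2),\eta(\rho/2)\}>0$ and $U:=\{u_0+s\psi_0+w:\ |s|<\rho,\ \|w\|_{E_1}<\delta\}$ with $\delta$ small enough (consistently with the two applications of the implicit function theorem) that $w(s,\eps)$ is the unique solution of the auxiliary equation in that ball, one reaches the conclusion: for $u\in U$, solving $\Phi(u)=q_0+\eps q$ is equivalent to $w=w(s,\eps)$ together with $\eta(s)=\eps$, and the latter has exactly two solutions in $(-\rho,\rho)$ when $0<\eps<\eps^*$ (one on each side of $0$, both inside $(-\rho/2,\rho/2)$ by monotonicity) and none when $-\eps^*<\eps<0$, since $\eta\ge0$. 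I expect the only delicate points to be bookkeeping ones: verifying that $P\,d\Phi(u_0)|_{E_1}$ is an isomorphism, and establishing the identity $\partial_s w(0,0)=0$ which makes $\partial_{ss}\beta(0,0)$ collapse exactly onto the quantity appearing in hypothesis $(ii)$ of Definition~\ref{def: singular}; the rest is two invocations of the implicit function theorem and a computation in one real variable.
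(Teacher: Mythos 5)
The paper does not prove this theorem; it imports it verbatim from Section~3.2 of the Ambrosetti--Prodi book \cite{AmPr}, so there is no in-paper argument to compare against. Your Lyapunov--Schmidt reduction is the standard proof of this fold-singularity result and is correct: the splittings $E=\R\psi_0\oplus E_1$, $F=\im(d\Phi(u_0))\oplus\R v_0$, the isomorphism $P\,d\Phi(u_0)|_{E_1}$, the key identity $\partial_s w(0,0)=0$ (which is what makes $\partial_{ss}\beta(0,0)$ collapse exactly onto $\langle\gamma_0,d^2\Phi(u_0)[\psi_0,\psi_0]\rangle$), and the reduction to the strictly convex scalar map $\eta$ with a nondegenerate minimum value $0$ at $s=0$ give precisely the two-or-no-solutions dichotomy, with the remaining bookkeeping (choosing $\rho$, $\delta$, $\eps^*$ compatibly with both invocations of the implicit function theorem) correctly flagged as routine.
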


We are ready to show that $u(\cdot;a,b;p)$ is non-degenerate.

\begin{proof}[Proof of Proposition \ref{prop: non degeneracy}]
Let
\[
X:= H^2(a,b) \cap H_0^1(a,b), \quad \|u\|_X:= \|\ddot{u}\|_2, \quad Y:= L^2(a,b).
\]
We introduce the map $\mathcal{F}:X \to Y$ defined by
\[
\mathcal{F}(u) = -\ddot{u} - g(u).
\]
Under assumption \eqref{h1}, it is immediate to see that $\mathcal{F} \in \mathcal{C}^2(X,Y)$ and
\[
d \mathcal{F}(u)\psi = -\ddot{\psi} -g'(u)\psi, \quad  \quad d^2 \mathcal{F}(u)[\psi_1,\psi_2] = -g''(u) \psi_1\psi_2.
\]
By the Fredholm alternative, $u(\cdot;a,b;p)$ is degenerate as solution of \eqref{positive BVP} if and only if it is singular for $\mathcal{F}$. So, let us assume by contradiction that $u(\cdot;a,b;p)$ is degenerate as solution of \eqref{positive BVP}.

\paragraph{Step 1)} \emph{$u(\cdot;a,b;p)$ is ordinary singular for $\mathcal{F}$}.\\
We have to show that $u(\cdot;a,b;p)$ satisfies points ($i$) and ($ii$) of Definition \ref{def: singular}. By degeneracy, problem
\begin{equation}\label{var eq for p}
\begin{cases}
\ddot{\psi}(t)+ g'(u(t;a,b;p))\psi(t)=0 & t \in (a,b) \\
\psi(a)=0=\psi(b)
\end{cases}
\end{equation}
has a nontrivial solution $\psi_0$, that is, $0$ is an eigenvalue for the operator \\ $d \mathcal{F}(u(t;a,b;p))$; this is a Sturm-Liouville operator with Dirichlet boundary conditions, hence all its eigenvalues are simple, and in particular \\
$\Ker \left(d \mathcal{F}(u(t;a,b;p))\right) = \R \psi_0$. Moreover, in light of the Fredholm alternative, $d \mathcal{F}(u(t;a,b;p))$ is a Fredholm operator with index $0$, so that property ($i$) in Definition \ref{def: singular} follows.

As far as point ($ii$) is concerned, first of all we claim that $0$ is the first eigenvalue of $d \mathcal{F}(u(t;a,b;p))$; if not, there exists $\lambda_1<0$ and $\psi_1 \in X \setminus \{0\}$ such that
\[
\begin{cases}
\ddot{\psi}_1(t)+ g'(u(t;a,b;p))\psi_1(t)=-\lambda_1 \psi_1(t) & t \in (a,b) \\
\psi_1(a)=0=\psi_1(b).
\end{cases}
\]
On the other hand, as $u(t;a,b;p)$ is a local minimizer for $J_{(a,b),p}$, we see that $d^2 J_{(a,b),p}(u(t;a,b;p))$ is a positive semi-definite quadratic form; this implies that
\begin{align*}
0 & \le d^2 J_{(a,b),p}(u(t;a,b;p))[\psi_1,\psi_1] = -\int_a^b\left( \ddot{\psi}_1 + g'(u(t;a,b;p))\psi_1\right) \psi_1 \\
&= \lambda_1 \int_a^b \psi_1^2 <0,
\end{align*}
a contradiction. Having proved that $0$ is the first eigenvalue of $d \mathcal{F}(u(t;a,b;p))$, we can assume that $\psi_0>0$ in $(a,b)$. By the Fredholm alternative, we know that $\im\left( d \mathcal{F} (u(t;a,b;p)) \right) = \{ q \in Y: \langle \gamma_0,q \rangle = 0\}$, where $\langle \gamma_0,q \rangle=\int_a^b \psi_0 q$. Hence
\[
\langle \gamma_0, d^2 \mathcal{F}(u(t;a,b;p))[\psi_0,\psi_0] \rangle = -\int_a^b g''( u(t;a,b;p) ) \psi_0^3 \neq 0
\]
being $g''<0$ in $(0,+\infty)$ and $\psi_0>0$ in $(a,b)$.

\paragraph{Step 2)} \emph{Conclusion of the proof}. \\
By definition, $\mathcal{F}(u(t;a,b;p) ) = p$. We can choose $q \in Y$ such that
\begin{itemize}
\item $\int_a^b q \psi_0 >0$;
\item $p+\eps q \in \mathcal{P}$ for every $|\eps|$ sufficiently small.
\end{itemize}
Indeed, let $\phi \in \mathcal{C}^\infty_c(a,b) \setminus \{0\}$ be negative. Taking $q=\ddot{\phi}$, we obtain
\[
\int_a^b \ddot{\phi} \psi_0 = \int_a^b \phi \ddot{\psi}_0 = -\int_a^b g'\left( u(t;a,b;q) \right) \phi \psi_0 >0,
\]
because $-g'<0$ in $\R$ and $\psi_0>0$ in $(a,b)$. Also, it is easy to check that the function $p +\eps q \in \mathcal{P}$ whenever $|\eps|$ is sufficiently small (see Remark \ref{rem: su P}). So, by definition, $\mathcal{F}\left(u(\cdot;a,b;p+\eps q)\right) = p+\eps q$ (to ensure that $u(\cdot;a,b;p+\eps q)$ solves \eqref{positive BVP} with forcing term $p + \eps q$, it is essential to know that $p + \eps q \in \mathcal{P}$), and by Lemma \ref{lem: continuous dependence} it results $u(\cdot;a,b;p+\eps q) \to u(\cdot;a,b;p)$ in $X$ as $\eps \to 0^-$. On the other hand, by Theorem \ref{thm: on ordinary singular} there exists a neighbourhood $U$ of $u(\cdot;a,b;p)$ in $X$ such that the equation $\mathcal{F}(u)=p+\eps q$ has no solution in $U$ for $\eps<0$ sufficiently small, a contradiction.
\end{proof}

As an easy consequence of the Fredholm alternative, we obtain also the following corollary.

\begin{corollary}\label{cor: uniq non homogeneous}
Let $p$ satisfy \eqref{h2}, let $\mathcal{P}$ be defined by \eqref{def di P}, let $\tilde L$ be defined in Proposition \ref{prop:existence}, and assume that $b-a \ge \tilde L$. The boundary value problem
\[
\begin{cases}
\ddot{\psi}(t)+g'(u(t;a,b;q)) \psi(t) = 0 & t \in (a,b) \\
\psi(a)= \psi_a, \quad \psi(b)=\psi_b
\end{cases}
\]
has a unique solution for every $q \in \mathcal{P}$.
\end{corollary}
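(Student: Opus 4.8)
The plan is to deduce Corollary~\ref{cor: uniq non homogeneous} directly from the non-degeneracy established in Proposition~\ref{prop: non degeneracy} via the Fredholm alternative for the Sturm--Liouville operator $\mathcal{L}\psi := \ddot\psi + g'(u(\cdot;a,b;q))\psi$. First I would observe that, by Propositions~\ref{prop:existence} and~\ref{prop:uniqueness}, for $b-a \ge \tilde L$ and $q \in \mathcal{P}$ the minimizer $u(\cdot;a,b;q)$ exists, is unique, strictly positive, and solves \eqref{positive BVP} with forcing term $q$; moreover Proposition~\ref{prop: non degeneracy} applies with $q$ in place of $p$ (the set $\mathcal{P}$ and the constant $\tilde L$ are unchanged, since $\mathcal{P}$ is built to be translation-stable under admissible forcing terms — note $q \in \mathcal{P}$ allows one to redefine $\mathcal{P}$ around $q$, or simply observe the proof of Proposition~\ref{prop: non degeneracy} only used membership in $\mathcal{P}$). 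Hence the homogeneous Dirichlet problem $\mathcal{L}\psi = 0$, $\psi(a)=0=\psi(b)$ has only the trivial solution.

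Next I would reduce the inhomogeneous boundary value problem to a homogeneous one by subtracting an explicit affine function. Set $\ell(t) := \psi_a + \frac{\psi_b - \psi_a}{b-a}(t-a)$, so that $\ell(a)=\psi_a$, $\ell(b)=\psi_b$, and $\ddot\ell \equiv 0$. Then $\psi$ solves the problem in the statement if and only if $\eta := \psi - \ell \in H^2(a,b)\cap H_0^1(a,b)$ solves
\[
\ddot\eta(t) + g'(u(t;a,b;q))\,\eta(t) = -g'(u(t;a,b;q))\,\ell(t) =: f(t), \qquad \eta(a)=0=\eta(b),
\]
where $f \in L^2(a,b)$ since $g' \in L^\infty$ (by \eqref{h1}, $g$ is $\mathcal{C}^2$ and bounded, and on the compact range of $u$ its derivative is bounded) and $\ell$ is bounded. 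So the claim is equivalent to the unique solvability of this last problem in $X := H^2(a,b)\cap H_0^1(a,b)$ for every right-hand side in $Y := L^2(a,b)$.

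Finally I would invoke the Fredholm alternative. The operator $d\mathcal{F}(u(\cdot;a,b;q)) : X \to Y$, $\psi \mapsto -\ddot\psi - g'(u)\psi$, is Fredholm of index $0$ (this was already recorded in the proof of Proposition~\ref{prop: non degeneracy}: it is a compact perturbation of the isomorphism $-\ddot{\,\cdot\,}: X \to Y$, e.g.\ via the compact embedding $X \hookrightarrow \mathcal{C}([a,b])$ and boundedness of $g'(u)$). By Proposition~\ref{prop: non degeneracy} its kernel is trivial; being index $0$, it is therefore surjective, hence bijective. Thus for every $f \in Y$ there is a unique $\eta \in X$ with $d\mathcal{F}(u)\eta = -f$, and correspondingly a unique $\psi = \eta + \ell$ solving the stated problem. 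I do not expect any serious obstacle here: the only point requiring a word of care is checking that Proposition~\ref{prop: non degeneracy} indeed applies with the given $q \in \mathcal{P}$ rather than literally with $p$, which follows since that proposition's proof depends on the forcing term only through membership in $\mathcal{P}$ and the threshold $\tilde L$.
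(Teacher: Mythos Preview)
Your argument is correct and is exactly the approach the paper intends: it records the corollary simply ``as an easy consequence of the Fredholm alternative,'' and your reduction to homogeneous boundary data plus the index-zero Fredholm property of $d\mathcal{F}(u(\cdot;a,b;q))$ is the standard way to cash this in. Your remark that Proposition~\ref{prop: non degeneracy} must be applied with $q$ rather than the literal $p$ is a fair point of care, and your justification is sound (the proof of that proposition uses only membership in $\mathcal{P}$, and the perturbation $q+\varepsilon\ddot\phi$ remains in $\mathcal{P}$ for small $|\varepsilon|$ because all the defining inequalities in \eqref{def di P} are strict).
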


\section{Differentiability of $\varphi^+(a,b)$}\label{sec: diff}

In this section we will show that $\varphi^+(a,b) = J_{(a,b),p}(u(\cdot\,;a,b;p))$ is differentiable as function of $a$ and $b$.

\begin{lemma}\label{lem: diff of minimizer}
Let $p$ satisfy \eqref{h2}, and let $\mathcal{P}$ be defined by \eqref{def di P}. Let $A$ and $B$ be fixed and let
\[
 \mathcal I := \left\{(t,a,b)\in \R^3: b-a > \tilde L, \  A  < a \le t \le b < B\right\},
\]
where $\tilde L$ has been defined in Proposition \ref{prop:existence}. If $q \in \mathcal{P}$ is of class
$\mathcal{C}^1$, then the map
\[
(t,a,b) \in \mathcal{I} \mapsto \left( u(t;a,b;q), \dot{u}(t;a,b;q) \right) \in \R^2
\]
is of class $\mathcal{C}^1$, too. More precisely,
\begin{align*}
\frac{\pa u}{\pa a}(t;a,b;q) = \xi_1(t) \quad & \quad \frac{\pa \dot{u}}{\pa a}(t;a,b;q) = \dot{\xi}_1(t) \\
\frac{\pa u}{\pa b}(t;a,b;q) = \xi_2(t) \quad & \quad \frac{\pa \dot{u}}{\pa b}(t;a,b;q) = \dot{\xi}_2(t),
\end{align*}
where $\xi_1$ and $\xi_2$ are the solutions (unique by Corollary \ref{cor: uniq non homogeneous}) of
\[
\ddot{\xi}(t) + g'(u(t;a,b;q)) \xi(t) = 0
\]
with the boundary conditions
\[
\begin{cases}
\xi_1(a)= - \dot{u}(a^+;a,b;q) \\
\xi_1(b) = 0
\end{cases} \quad \text{or} \quad \begin{cases}
\xi_2(a)=0 \\
\xi_2(b)= - \dot{u}(b^-;a,b;q),
\end{cases}
\]
respectively.
\end{lemma}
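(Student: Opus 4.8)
The plan is to obtain the differentiability of $(t,a,b) \mapsto (u(t;a,b;q),\dot u(t;a,b;q))$ from the implicit function theorem applied in a shooting formulation. First I would fix $(a_0,b_0) \in \mathcal{I}$ with $b_0-a_0>\tilde L$ and the $\mathcal{C}^1$ forcing term $q$, and reduce everything to the initial value problem: for $(a,b)$ near $(a_0,b_0)$ and a parameter $v \in \R$, let $y(\cdot;a,v)$ denote the solution of $\ddot y + g(y) = q$ on $[a,B]$ with $y(a)=0$, $\dot y(a)=v$. Since $g \in \mathcal{C}^2$ and $q \in \mathcal{C}^1$, classical smooth dependence on initial conditions and parameters gives that $(t,a,v)\mapsto (y(t;a,v),\dot y(t;a,v))$ is $\mathcal{C}^1$ on a neighbourhood of $(t_0,a_0,v_0)$, where $v_0 := \dot u(a_0^+;a_0,b_0;q)$. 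The solution $u(\cdot;a,b;q)$ of \eqref{positive BVP} is then recovered as $y(\cdot;a,v(a,b))$ where $v(a,b)$ is determined implicitly by the condition $y(b;a,v)=0$.

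Next I would verify the hypothesis of the implicit function theorem for the equation $\Theta(a,b,v):= y(b;a,v)=0$ at $(a_0,b_0,v_0)$. Writing $\eta(t):=\partial y/\partial v(t;a_0,v_0)$, this is the solution of the variational equation $\ddot\eta + g'(u(t;a_0,b_0;q))\eta = 0$ with $\eta(a_0)=0$, $\dot\eta(a_0)=1$; so $\partial\Theta/\partial v(a_0,b_0,v_0)=\eta(b_0)$. By Proposition \ref{prop: non degeneracy}, $u(\cdot;a_0,b_0;q)$ is non-degenerate, hence the only solution of \eqref{variational eq} vanishing at both endpoints is the trivial one; since $\eta$ is a nontrivial solution with $\eta(a_0)=0$, necessarily $\eta(b_0)\neq 0$. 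Therefore the implicit function theorem yields a $\mathcal{C}^1$ function $(a,b)\mapsto v(a,b)$ near $(a_0,b_0)$ with $v(a_0,b_0)=v_0$, and composing with the $\mathcal{C}^1$ flow map shows $(t,a,b)\mapsto (u(t;a,b;q),\dot u(t;a,b;q))$ is $\mathcal{C}^1$ on $\mathcal{I}$. (Strictly positivity of the resulting solution for $(a,b)$ close to $(a_0,b_0)$, needed so that it really is \emph{the} minimizer $u(\cdot;a,b;q)$, follows from continuous dependence together with the open condition $u(\cdot;a_0,b_0;q)>0$ on the compact interior and the positive normal derivative at the endpoints; alternatively one invokes uniqueness from Proposition \ref{prop:uniqueness} and Lemma \ref{lem: continuous dependence}.)

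It remains to identify the partial derivatives explicitly. Differentiating the identity $y(b;a,v(a,b))\equiv 0$ with respect to $a$ and with respect to $b$, and using $\partial y/\partial a(t;a_0,v_0) = -\dot u(a_0^+;a_0,b_0;q)\,\eta(t) - \zeta(t)$ where... — in fact the cleanest route is: set $\xi_1(t):= \partial u/\partial a(t;a_0,b_0;q)$. Differentiating the relations $u(a;a,b;q)=0$ and $u(b;a,b;q)=0$ in $a$ gives $\dot u(a^+;a,b;q) + \xi_1(a) = 0$ and $\xi_1(b)=0$, while differentiating the ODE $\ddot u + g(u)=q$ in $a$ (legitimate now that $\mathcal{C}^1$-dependence is established, and $g\in\mathcal{C}^2$, $q\in\mathcal{C}^1$) gives $\ddot\xi_1 + g'(u)\xi_1 = 0$; these are exactly the stated boundary conditions for $\xi_1$, and uniqueness is Corollary \ref{cor: uniq non homogeneous}. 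The computation for $\xi_2 := \partial u/\partial b$ is symmetric, differentiating in $b$ instead. The formulas $\partial\dot u/\partial a = \dot\xi_1$, $\partial\dot u/\partial b = \dot\xi_2$ then follow by exchanging the $t$-derivative with the $(a,b)$-derivatives, which is justified by the joint $\mathcal{C}^1$ regularity.

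The main obstacle is establishing the joint $\mathcal{C}^1$-regularity rigorously while keeping track of the fact that the minimizer is a priori only known to be smooth on the \emph{open} interval $(a,b)$, with a corner possible at the endpoints — one must work with one-sided derivatives $\dot u(a^+;a,b;q)$ and $\dot u(b^-;a,b;q)$, and the shooting reformulation is what makes this clean, since it replaces the boundary-value problem (whose domain varies with the parameters) by an initial-value problem on the fixed interval $[A,B]$ with the endpoint $b$ entering only through evaluation. The invertibility step, which is where non-degeneracy is genuinely used, is then short; but care is needed to confirm that the $\mathcal{C}^1$ solution produced by the implicit function theorem coincides with the minimizer $u(\cdot;a,b;q)$ for all $(a,b)$ in $\mathcal{I}$ and not merely near a base point, which is handled by the connectedness of $\mathcal{I}$ together with the global uniqueness statement for $b-a>\tilde L$.
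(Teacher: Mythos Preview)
Your argument is correct and reaches the same conclusion, but via a genuinely different route from the paper. The paper works with the \emph{scaled} formulation on the fixed interval $(0,1)$: it defines the nonlinear operator
\[
\Phi(w,a,b) = -\ddot w - g\big((b-a)^2 w\big) + q(a+t(b-a))
\]
from $\Delta \times X$ to $L^2(0,1)$, where $X = H^2(0,1)\cap H_0^1(0,1)$, and applies the Banach-space implicit function theorem at $\wh u(\cdot\,;a,b;q)$; invertibility of $\pa_w\Phi$ is exactly Proposition~\ref{prop: non degeneracy}. This yields $(a,b)\mapsto \wh u(\cdot\,;a,b;q)$ of class $\mathcal{C}^1(\Delta,X)$, from which the $\mathcal{C}^1$ regularity of $(t,a,b)\mapsto (u,\dot u)$ and the formulas for $\xi_1,\xi_2$ follow by undoing the scaling and differentiating \eqref{positive BVP}.

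Your shooting reformulation trades the Banach-space machinery for the classical $\mathcal{C}^1$ dependence of ODE flows and a one-dimensional implicit function theorem on $\Theta(a,b,v)=y(b;a,v)$; non-degeneracy enters in the same way, to guarantee $\eta(b_0)\neq 0$. This is more elementary and handles the moving domain directly, without the scaling trick; the paper's approach, by contrast, gives regularity in $H^2$ in one stroke and makes the identification with the minimizer automatic (no separate local-to-global patching is needed, since the implicit function theorem is applied at every $(a,b)\in\Delta$ simultaneously). Both arguments rely on $q\in\mathcal{C}^1$ for the same reason: it is what makes the relevant map (your flow, the paper's $\Phi$) of class $\mathcal{C}^1$ in the parameters.
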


\begin{proof}
In light of the results of the previous sections, it is not difficult to adapt the proof of Lemma 5.1 in \cite{OrVe}. We report the sketch of the proof for the sake of completeness. Thanks to the explicit relations \eqref{scaling}-\eqref{scaled problem}, the first part of the thesis follows if we prove the differentiability of $\wh{u}(\cdot\,;a,b;q)$ with respect to $(t,a,b)$. Let $\Delta:= \{(a,b) \in \R^2: b-a > \tilde L, A < a < b < B\}$, $X=H_0^1(0,1) \cap H^2(0,1)$, and consider the map $\Phi: \Delta \times X \to L^2(0,1)$ defined by
\[
\Phi(w,a,b) = -\ddot{w} - g\left((b-a)^2w\right) + q(a+t(b-a)).
\]
By definition, $\Phi\left(\wh{u}(\cdot\,;a,b;q);a,b\right) = 0$; we wish to show that the implicit function theorem applies to $\Phi$ in a neighbourhood of $\wh{u}(\cdot\,;a,b;q)$. Having chosen $q \in \mathcal{C}^1(\R)$, it is not difficult to check that $\Phi \in \mathcal{C}^1(\Delta \times X, Y)$, and that in particular
\[
\pa_w \Phi\left( \wh{u}(\cdot\,;a,b;p);a,b\right)[\psi] = -\ddot{\psi} - (b-a)^2 g'\left( (b-a)^2 \wh{u}(\cdot\,;a,b;p)\right) \psi,
\]
which is invertible thanks to Proposition \ref{prop: non degeneracy}. Therefore, the implicit function theorem applies and the map $(a,b) \mapsto \wh{u}(\cdot\,;a,b;q)$ is of class $\mathcal{C}^1(\Delta,X)$. By looking at the topology of $X$, this means that the map
\[
(t,a,b) \in \mathcal{I} \mapsto \left( u(t;a,b;q), \dot{u}(t;a,b;q) \right) \in \R^2
\]
has partial derivatives with respect to $a$ and $b$, and that they are continuous in the three variables. The differential equation for $\wh{u}(\cdot\,;a,b;q)$ reveals that also the partial derivative with respect to $t$ exists and is continuous, that is, the map is $\mathcal{C}^1$, which completes the first part of the proof. At this point, the characterizations of $\xi_1$ and $\xi_2$ can be obtained by differentiating problem \eqref{positive BVP}  by $a$ and $b$ respectively.
\end{proof}
\begin{proposition}\label{prop: diff of phi}
For every $p$ satisfying \eqref{h2}, the function \\$\varphi^+(a,b)=\varphi^+(a,b;p)$ is of class $\mathcal{C}^1$ with respect to $a$ and $b$ in $\{ b-a > \tilde L\}$, with derivatives
\[
\frac{\pa \varphi^+}{\pa a}(a,b) = \frac{1}{2} \dot{u}^2(a^+;a,b;p) \quad \text{and} \quad \frac{\pa \varphi^+}{\pa b}(a,b) = -\frac{1}{2} \dot{u}^2(b^-;a,b;p).
\]
\end{proposition}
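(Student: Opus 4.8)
The plan is to establish the formula first when the forcing term is of class $\mathcal{C}^1$, where the minimizer depends differentiably on $(a,b)$ by Lemma \ref{lem: diff of minimizer}, and then to recover the general case by a mollification argument combined with the continuous dependence of Lemma \ref{lem: continuous dependence}. Throughout, $\mathcal{P}$ is the set attached to the fixed $p$, so that $p\in\mathcal{P}$ and $\tilde L$ is available.

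\emph{Step 1 (forcing of class $\mathcal{C}^1$).} Let $q\in\mathcal{P}\cap\mathcal{C}^1(\R)$. By Lemma \ref{lem: diff of minimizer}, $(t,a,b)\mapsto(u(t;a,b;q),\dot u(t;a,b;q))$ is $\mathcal{C}^1$ on $\mathcal{I}$, with $\pa_a u(\cdot\,;a,b;q)=\xi_1$ and $\pa_b u(\cdot\,;a,b;q)=\xi_2$, the $\xi_i$ being the solutions of $\ddot\xi+g'(u)\xi=0$ with the boundary data stated there. I would then differentiate
\[
\varphi^+(a,b;q)=\int_a^b\left[\tfrac12\dot u^2(t;a,b;q)-G(u(t;a,b;q))+q(t)u(t;a,b;q)\right]dt
\]
with respect to $b$. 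The Leibniz boundary contribution is $\tfrac12\dot u^2(b^-;a,b;q)$, since $u(b;a,b;q)=0$ and $G(0)=0$. The interior contribution is $\int_a^b(\dot u\dot\xi_2-g(u)\xi_2+q\xi_2)$; integrating $\int_a^b\dot u\dot\xi_2$ by parts, the boundary part equals $[\dot u\xi_2]_a^b=-\dot u^2(b^-;a,b;q)$ because $\xi_2(a)=0$ and $\xi_2(b)=-\dot u(b^-;a,b;q)$, while $-\int_a^b\ddot u\,\xi_2=-\int_a^b(q-g(u))\xi_2$ by the equation $\ddot u+g(u)=q$, so that the interior term collapses to $-\dot u^2(b^-;a,b;q)$. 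Summing, $\pa_b\varphi^+(a,b;q)=-\tfrac12\dot u^2(b^-;a,b;q)$; the identity for $\pa_a\varphi^+$ is obtained identically, the sign change of the Leibniz term combining with $\xi_1(a)=-\dot u(a^+;a,b;q)$ and $\xi_1(b)=0$ to produce $\tfrac12\dot u^2(a^+;a,b;q)$. Since $(u,\dot u)$ is jointly continuous (evaluation at $t=a,b$ included), these partial derivatives are continuous, hence $\varphi^+(\cdot,\cdot;q)\in\mathcal{C}^1(\{b-a>\tilde L\})$.

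\emph{Step 2 (general $p$).} Let $\rho_n$ be a standard mollifier and $p_n:=p*\rho_n\in\mathcal{C}^\infty(\R)$. One checks that $\|p_n\|_\infty\le\|p\|_\infty<M_1$, that $p_n\to p$ uniformly on compact sets (hence in $L^2(A,B)$), and that $A(p_n)=A(p)$; moreover, mollifying the decomposition $p=p_{1,\eps}+\dot p_{2,\eps}$ of Lemma \ref{lem: decomposition} yields $p_n=(p_{1,\eps}*\rho_n)+\frac{d}{dt}(p_{2,\eps}*\rho_n)$ with $\|p_{1,\eps}*\rho_n-A(p)\|_\infty<\eps/2$ and $\|p_{2,\eps}*\rho_n\|_\infty<M_\eps$, so $p_n\in\mathcal{P}$. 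By Step 1, $\varphi^+(\cdot,\cdot;p_n)\in\mathcal{C}^1$ with $\pa_a\varphi^+(a,b;p_n)=\tfrac12\dot u^2(a^+;a,b;p_n)$ and $\pa_b\varphi^+(a,b;p_n)=-\tfrac12\dot u^2(b^-;a,b;p_n)$. Since $p_n\to p$ in the metric of $\mathcal{P}$, Lemma \ref{lem: continuous dependence} together with the continuity of $J$ gives $\varphi^+(a,b;p_n)\to\varphi^+(a,b;p)$ pointwise, and a routine subsequence argument based on the joint continuity on $\overline{\mathcal{I}}\times\mathcal{P}$ upgrades this to $\dot u^2(a^+;\cdot,\cdot;p_n)\to\dot u^2(a^+;\cdot,\cdot;p)$ and $\dot u^2(b^-;\cdot,\cdot;p_n)\to\dot u^2(b^-;\cdot,\cdot;p)$ uniformly on compact subsets of $\{b-a>\tilde L\}$. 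Hence $\varphi^+(\cdot,\cdot;p_n)\to\varphi^+(\cdot,\cdot;p)$ in $\mathcal{C}^1_{\loc}$, which yields the claim.

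The differentiation and integration by parts of Step 1 are routine once Lemma \ref{lem: diff of minimizer} is in hand; the main obstacle is making Step 2 rigorous, namely checking that the mollifications stay in $\mathcal{P}$ and, above all, promoting the pointwise convergence of the derivative formulas to locally uniform convergence, so that $\mathcal{C}^1$-convergence (hence the formula for general $p$) can be concluded. An alternative, more self-contained route valid already for continuous $p$ is a direct domain-variation estimate: comparing $\varphi^+(a,b\pm h)$ with the affine reparametrizations of $u(\cdot\,;a,b)$ and of $u(\cdot\,;a,b\pm h)$ onto the shifted intervals gives matching upper and lower bounds whose expansion in $h$ produces $\mp\tfrac12\dot u^2(b^\mp;a,b;p)$ directly, and symmetrically for $a$.
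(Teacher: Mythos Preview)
Your proof is correct and follows essentially the same two-step strategy as the paper: first compute the derivatives for $q\in\mathcal{P}\cap\mathcal{C}^1$ via Lemma \ref{lem: diff of minimizer} (you spell out the integration by parts that the paper leaves as ``direct computation''), then approximate a general $p$ by smooth elements of $\mathcal{P}$ and pass to the limit using Lemma \ref{lem: continuous dependence} and a compactness argument for uniform convergence. The only cosmetic difference is your choice of approximants: you take $p_n=p*\rho_n$ and verify membership in $\mathcal{P}$ by mollifying the decomposition of Lemma \ref{lem: decomposition}, whereas the paper builds $q_n=A(p)+\frac{d}{dt}(\rho_m*p_{2,\eps_n})$ directly from that decomposition; both constructions work for the same reasons.
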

\begin{proof}
If $p \in \mathcal{C}^1(\R)$ then we can apply Lemma \ref{lem: diff of minimizer}, obtaining that $\varphi^+(a,b)= J_{(a,b),p}(u(\cdot\,;a,b;p))$ is differentiable. In such case, the expressions of its derivatives follow by direct computation. In the general case, we claim that
\begin{equation}\label{claim strano}
\text{there exists $(q_n) \subset \mathcal{P} \cap \mathcal{C}^1(\R)$ such that $q_n \to p$ in $L^2(A,B)$}.
\end{equation}
This is not straightforward, since $\mathcal{P}$ is defined as in \eqref{def di P}. Let $\eps_n \to 0$ as $n \to \infty$, an let us consider the decomposition
\[
p= p_{1,\eps_n} + \dot{p}_{2,\eps_n}
\]
given by Lemma \ref{lem: decomposition}. For any fixed $n$, we consider
\[
q_{n,m} = A(p) + \frac{d}{dt}\left(\rho_m \ast p_{2,\eps_n}\right) = A(p) + \rho_m \ast \dot{p}_{2,\eps_n},
\]
where $(\rho_m)$ is a family of mollifiers, $\ast$ denotes the usual product of convolution, and the last identity follows from the fact that $p_{2,\eps} \in \mathcal{C}^1(\R)$. It is not difficult to check that $q_{n,m} \in \mathcal{P} \cap \mathcal{C}^1(\R)$ for any $m,n$, and that for any $n$ there exists $m_n$ sufficiently large such that
\[
\|q_{n,m_n} - p \|_{L^2(A,B)} < \eps_n.
\]
Hence, the sequence $(q_{n,m_n})$ has the desired properties, and claim \eqref{claim strano} follows. \\
We introduce $\varphi_n(a,b):= \varphi^+(a,b;q_n)$ and $\varphi(a,b):= \varphi^+(a,b;p)$, and observe that, thanks to the previous step, each $\varphi_n$ is of class $\mathcal{C}^1(\R)$. Let $\Delta:= \{(a,b): b-a > \tilde L, A < a < b < B\}$. We claim that
\begin{equation}\label{claim 2}
\text{$\varphi_n\to \varphi$ uniformly for $(a,b) \in \overline{\Delta}$}.
\end{equation}
If not,
\[
\sup_{(a,b) \in \overline{\Delta}} \left| \varphi_n(a,b) - \varphi(a,b) \right| = \sup_{(a,b) \in \overline{\Delta}} \left| \varphi^+(a,b;q_n) - \varphi^+(a,b;p) \right|  = c_n \ge \bar c >0.
\]
By Lemma \ref{lem: continuous dependence} and the continuity of $J_{(a,b),p}(u)$ as function of $(u,a,b,p)$, the function $\varphi^+$ is continuous in the three variables, so that by compactness for every $n$ the supremum is achieved by $(a_n,b_n) \in \overline{\Delta}$. Therefore, if \eqref{claim 2} does not hold, then
\[
\left|\varphi^+(a_n,b_n;q_n)- \varphi^+(a_n,b_n;p) \right| \ge \bar c
\]
for any $n$. Since, up to subsequences, both $a_n$ and $b_n$ converge, this contradicts the continuity of $\varphi^+$.

With a similar argument we see also that $\dot{u}(\tau;a,b;q_n) \to \dot{u}(\tau;a,b;p)$ for $\tau = a,b$, uniformly in $\overline{\Delta}$, so that
\[
\frac{\pa \varphi_n}{\pa a}(a,b) \to \frac{1}{2}\dot{u}^2 (a^+;a,b;p) \quad \text{and} \quad \frac{\pa \varphi_n}{\pa b}(a,b) \to -\frac{1}{2}\dot{u}^2 (b^-;a,b;p),
\]
uniformly in $\overline{\Delta}$. The convergence of $(\varphi_n)$ and of the sequences of the derivatives reveals that $\varphi$ is of class $\mathcal{C}^1$ in $\Delta$, and the thesis follows.
\end{proof}

\section{Sign-changing solutions}\label{sec: sign-change}

In this section we complete the proof of Theorem \ref{thm: main thm}. Firstly, we prove the existence of sign-changing solutions of \eqref{equation} in bounded (sufficiently large) intervals; then, by an exhaustion  procedure, we pass to the whole real line. To do this, we juxtapose positive and negative solutions on adjacent intervals, the latter existing and satisfying analogous properties of the former ones, as enlightened in Remark \ref{rmk: sul pb negativo}.
To distinguish between positive and negative solutions, and since the forcing term $p$ is now fixed, we change our notations accordingly, denoting such solutions as $u_\pm(\cdot\,;a,b)$. Resuming, we have the following result.
\begin{proposition}\label{key proposition resumed}
For every $\eps>0$ there exists $L>0$ such that, if $b-a \ge L$, then the value $\varphi^\pm(a,b)$ is achieved by a unique $u_\pm(\cdot\,;a,b) \in H_0^1(a,b)$, which is strictly positive/negative and solves equation \eqref{equation} in $(a,b)$. Moreover,
\begin{gather*}
\|u_\pm(\cdot\,;a,b)\| \le (\|g\|_{\infty}+\|p\|_\infty) (b-a)^{\frac{3}{2}} \\
-\underline{\alpha}(b-a)^3   \le \varphi^+(a,b) \le -\overline{\alpha}(b-a)^3 \\
-\underline{\beta}(b-a)^3   \le \varphi^-(a,b) \le -\overline{\beta}(b-a)^3,
\end{gather*}
where $\underline{\alpha},\overline{\alpha}$ have been defined in \eqref{alfasottotsopra} and
\begin{equation*}
\underline{\beta} := \frac{(-g_- + A(p))^2}{24}+\eps \quad \text{and} \quad \overline{\beta}:= \frac{(-g_- + A(p))^2}{24}-\eps.
\end{equation*}
\end{proposition}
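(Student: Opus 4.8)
The plan is to assemble Proposition \ref{key proposition resumed} as a direct corollary of the work already done in Sections \ref{sec: existence minimi}--\ref{sec: diff}, merely collecting and specializing statements, once $p$ is fixed. First I would recall that, with $p$ fixed, we may take $q = p \in \mathcal{P}$ in all the uniform-in-$q$ estimates: thus $L$ in the statement can be taken to be $\tilde L$ of Proposition \ref{prop:existence}, possibly enlarged so that the $\eps$-dependent conclusions of Proposition \ref{key proposition} (equivalently Corollary \ref{indietro su (a,b)}) hold for the given $\eps$ on the positive side. Existence and strict positivity of a unique minimizer $u_+(\cdot\,;a,b)$ solving \eqref{positive BVP}, hence \eqref{equation} in $(a,b)$, is then exactly the content of Propositions \ref{prop:existence} and \ref{prop:uniqueness}. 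The two-sided estimate $-\underline\alpha(b-a)^3 \le \varphi^+(a,b) \le -\overline\alpha(b-a)^3$ is precisely Corollary \ref{indietro su (a,b)} evaluated at $q=p$, with $\underline\alpha,\overline\alpha$ as in \eqref{alfasottotsopra}.

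Next I would treat the negative side by invoking Remark \ref{rmk: sul pb negativo}: under the substitution $v=-u$, $\bar g(s)=-g(-s)$, $\bar p=-p$, the hypotheses \eqref{h1}--\eqref{h2} are preserved, negative solutions for $(g,p)$ correspond to positive solutions for $(\bar g,\bar p)$, and $\varphi^-(a,b)$ for $(g,p)$ equals $\varphi^+(a,b)$ for $(\bar g,\bar p)$. Since the asymptotic limits transform as $\bar g_\pm = -g_\mp$ and $A(\bar p) = -A(p)$, applying the positive-side estimate in the barred problem yields $-\underline\beta(b-a)^3 \le \varphi^-(a,b) \le -\overline\beta(b-a)^3$ with $\underline\beta,\overline\beta$ obtained from $\underline\alpha,\overline\alpha$ by replacing $g_+-A(p)$ with $\bar g_+ - A(\bar p) = -g_- + A(p)$, exactly as claimed. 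One must take $L$ to be the maximum of the thresholds needed for the positive problem associated to $(g,p)$ and for the positive problem associated to $(\bar g,\bar p)$; since the latter satisfies the same structural assumptions, such a threshold exists.

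Finally, the uniform norm bound $\|u_\pm(\cdot\,;a,b)\| \le (\|g\|_\infty + \|p\|_\infty)(b-a)^{3/2}$ follows from Lemma \ref{bound norm minimizers}, which gives $|\dot{\wh u}_{(a,b)}(t)| \le \|g\|_\infty + \|p\|_\infty$ pointwise on $(0,1)$; undoing the scaling \eqref{scaling}--\eqref{scaled functional}, one has $\|u(\cdot\,;a,b)\|^2 = \int_a^b \dot u^2 = (b-a)^3 \int_0^1 \dot{\wh u}\,^2 \le (b-a)^3(\|g\|_\infty+\|p\|_\infty)^2$, and the same bound holds on the negative side by Remark \ref{rmk: sul pb negativo} since $\|\bar g\|_\infty = \|g\|_\infty$, $\|\bar p\|_\infty = \|p\|_\infty$. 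I do not expect any genuine obstacle here; the only point requiring a little care is bookkeeping of the various thresholds ($\tilde L$, $L_4$, $L_1(\eps)$, and their barred analogues) so that a single $L$ depending only on $\eps$ works for all three groups of conclusions simultaneously, but this is routine once one observes that every constant in Section \ref{sec: BVP large} was arranged to be uniform over $q \in \mathcal{P}$ and the barred problem falls under the same framework.
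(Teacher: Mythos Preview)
Your proposal is correct and follows essentially the same route as the paper, which proves the proposition in one line by citing Proposition \ref{prop:existence}, Lemma \ref{bound norm minimizers}, Corollary \ref{indietro su (a,b)} and Remark \ref{rmk: sul pb negativo}. You have simply made explicit the scaling computation behind the norm bound, added the reference to Proposition \ref{prop:uniqueness} for the uniqueness claim, and spelled out the bookkeeping of thresholds, all of which the paper leaves implicit.
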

\begin{proof}
The proposition directly follows from Proposition \ref{prop:existence}, Lemma \ref{bound norm minimizers}, Corollary \ref{indietro su (a,b)} and Remark \ref{rmk: sul pb negativo}.
\end{proof}
By assumption \eqref{h2}, there are two possibilities:
\[
\text{either} \quad g_+-A(p) = -g_- +A(p) \quad \text{or} \quad g_+-A(p)  \neq -g_- +A(p).
\]
In the former case, we observe that for a given $\eps$ it results $\underline{\alpha}= \underline{\beta}$ and $\overline{\alpha}= \overline{\beta}$. Otherwise, it is possible to choose $\eps$ sufficiently small in such a way that
\[
\text{either} \quad \underline{\alpha} < \overline{\beta} \quad \text{or} \quad \underline{\beta}< \overline{\alpha}.
\]
To fix the ideas, in the following we consider the case
\begin{equation}\label{sort alhpa beta}
\overline{\beta}<\underline{\beta}<\overline{\alpha} < \underline{\alpha}.
\end{equation}
The reader can easily adapt the arguments below in order to cover also the other situations (actually, if $g_+-A(p) = -g_- +A(p)$, the problem is considerably simplified).

Firstly, we start by choosing $\eps>0$ sufficiently small in Proposition \ref{key proposition resumed} in such a way that
\begin{equation}\label{rel alpha beta}
\ddfrac{\underline{\beta}}{ \left(1+\sqrt{\underline{\beta}/\underline{\alpha}}\right)^2 } < \overline{\beta};
\end{equation}
by definition, one can easily check that this choice is possible.

\begin{remark}\label{rmk: su scelta alpha beta}
Let $\nu:= \underline{\beta}/ \underline{\alpha}$. It is useful to observe that equation \eqref{rel alpha beta} implies that
\begin{align*}
\underline{\alpha} \left(\frac{\sqrt{ \nu  }}{1+\sqrt{\nu}}\right)^3 + \underline{\beta}\left(\frac{1}{1+\sqrt{\nu}}\right)^3-\overline{\beta} & <0 \\
\underline{\alpha} \left(\frac{\sqrt{\nu}}{1+\sqrt{\nu}}\right)^3 + \underline{\beta}\left(\frac{1}{1+\sqrt{\nu}}\right)^3-\overline{\alpha} & <0.
\end{align*}
First of all, by \eqref{sort alhpa beta} we immediately see that the second of these relations is automatically satisfied provided the first one holds. And for the first one it is sufficient to note that
\begin{multline*}
\underline{\alpha} \left(\frac{\sqrt{ \nu  }}{1+\sqrt{\nu}}\right)^3 + \underline{\beta}\left(\frac{1}{1+\sqrt{\nu}}\right)^3  = \underline{\alpha} \left(\frac{1}{1+\sqrt{\nu}}\right)^3  \left[  \left(\sqrt{\nu} \right)^3 + \nu \right] \\
= \frac{\underline{\alpha} \nu}{\left(1+ \sqrt{\nu}\right)^2} =  \frac{\underline{\beta}}{ \left(1+\sqrt{\underline{\beta}/\underline{\alpha}}\right)^2 }.
\end{multline*}
\end{remark}

Let $(A,B) \subset \R$ and $k \in \N$ be such that $(k+1)L \le B-A$; hence, it is possible to divide the interval $(A,B)$ in $k+1$ sub-intervals, in such a way that each of them is larger than $L $. We define the set of  admissible partitions of $(A,B)$ in $(k+1)$ sub-intervals as
\[
\mathcal{B}_k:= \left\{(t_1,\dots,t_k) \in \R^k: A=: t_0 \le t_1 \le \dots \le t_k \le t_{k+1}:=B, t_{i+1}-t_i \ge L \right\};
\]
also, we introduce the function $\psi: \mathcal{B}_k \to \R$ defined by
\begin{equation}\label{def di psi}
\psi(t_1,\dots,t_k):= \sum_{i=0}^k \varphi^{\sigma(i)} (t_i,t_{i+1}), \quad \text{where} \quad \sigma(i) = \begin{cases} + & \text{if $i$ is even} \\ - & \text{if $i$ is odd}. \end{cases}
\end{equation}
We consider the maximization problem
\begin{equation}\label{max problem}
c_k(A,B):= \sup \left\{ \psi(t_1,\dots,t_k): (t_1,\dots,t_k) \in \mathcal{B}_k \right\}.
\end{equation}

\begin{remark}
It is possible to consider also the maximization problem for the function having opposite $\sigma(i)$. The situation is essentially the same.
\end{remark}

\begin{lemma}\label{lem: exist of max}
The value $c_k(A,B)$ is achieved by a partition $(\bar t_1,\dots,\bar t_k) \in \mathcal{B}_k$.
\end{lemma}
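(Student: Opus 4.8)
The strategy is the direct method applied to the function $\psi$ on the set $\mathcal{B}_k$. First I would observe that $\mathcal{B}_k$ is a subset of $\R^k$ which is closed (it is cut out by the closed conditions $t_0 \le t_1$, $t_i \le t_{i+1}$ and $t_{i+1}-t_i \ge L$, with $t_0=A$, $t_{k+1}=B$ fixed) and bounded (every coordinate lies in $[A,B]$), hence compact. Second, I would argue that $\psi$ is continuous on $\mathcal{B}_k$: each summand $\varphi^{\sigma(i)}(t_i,t_{i+1})$ is of the form $\varphi^{\pm}(a,b)$ evaluated at $(a,b)=(t_i,t_{i+1})$ with $b-a \ge L \ge \tilde L$, and by Lemma~\ref{lem: continuous dependence} (together with the continuity of $J_{(a,b),p}(u)$ as a function of $(u,a,b)$, which was used in the proof of Proposition~\ref{prop: diff of phi} to deduce continuity of $\varphi^+$) the map $(a,b) \mapsto \varphi^{\pm}(a,b)$ is continuous on $\{b-a \ge \tilde L\}$; the analogous statement for $\varphi^-$ follows from Remark~\ref{rmk: sul pb negativo}. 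Composing with the continuous coordinate projections $(t_1,\dots,t_k) \mapsto (t_i,t_{i+1})$ and summing, $\psi$ is continuous. Third, a continuous function on a nonempty compact set attains its supremum, so $c_k(A,B)=\psi(\bar t_1,\dots,\bar t_k)$ for some $(\bar t_1,\dots,\bar t_k) \in \mathcal{B}_k$; nonemptiness of $\mathcal{B}_k$ is guaranteed by the standing hypothesis $(k+1)L \le B-A$, which allows one to split $[A,B]$ into $k+1$ pieces each of length at least $L$ (e.g. the uniform partition $t_i = A + i(B-A)/(k+1)$).

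**Main obstacle.** None of the three ingredients is deep, but the one requiring genuine care is the continuity of $\psi$, and more precisely the continuity of $\varphi^{\pm}(a,b)$ up to the boundary of its domain — i.e. the fact that $\varphi^{+}$ extends continuously to the closed region $\{b-a \ge \tilde L\}$ rather than merely being continuous on the open one. This is exactly what Lemma~\ref{lem: continuous dependence} provides (its domain $\overline{\mathcal I}$ is the closed region), and it is why that lemma was formulated with closures; I would simply invoke it. Also worth a brief remark: the evaluation is always legitimate because on $\mathcal{B}_k$ one has $t_{i+1}-t_i \ge L \ge \tilde L$, so each $\varphi^{\sigma(i)}(t_i,t_{i+1})$ is defined and is attained by the corresponding unique one-signed minimizer $u_{\sigma(i)}(\cdot\,;t_i,t_{i+1})$, as in Proposition~\ref{key proposition resumed}.

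**Proof.**
\begin{proof}
The set $\mathcal{B}_k$ is nonempty: since $(k+1)L \le B-A$, the uniform partition $\bar t_i := A + i\,(B-A)/(k+1)$, $i=0,\dots,k+1$, satisfies $\bar t_{i+1}-\bar t_i = (B-A)/(k+1) \ge L$, hence belongs to $\mathcal{B}_k$. Moreover $\mathcal{B}_k$ is a closed and bounded subset of $\R^k$: it is contained in $[A,B]^k$, and it is the intersection of the closed sets $\{t_i \le t_{i+1}\}$ and $\{t_{i+1}-t_i \ge L\}$ for $i=0,\dots,k$ (with $t_0=A$ and $t_{k+1}=B$); therefore $\mathcal{B}_k$ is compact.

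It remains to check that $\psi$ is continuous on $\mathcal{B}_k$. Fix $i \in \{0,\dots,k\}$. On $\mathcal{B}_k$ we have $t_{i+1}-t_i \ge L \ge \tilde L$, so by Lemma~\ref{lem: continuous dependence} the map $(t_i,t_{i+1}) \mapsto \left(u(t;t_i,t_{i+1};p), \dot u(t;t_i,t_{i+1};p)\right)$ is continuous; combining this with the continuity of $J_{(a,b),p}(u)$ as a function of $(u,a,b)$, the map $(t_i,t_{i+1}) \mapsto \varphi^+(t_i,t_{i+1}) = J_{(t_i,t_{i+1}),p}(u(\cdot\,;t_i,t_{i+1};p))$ is continuous on $\{t_{i+1}-t_i \ge \tilde L\}$. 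By Remark~\ref{rmk: sul pb negativo} the analogous statement holds for $\varphi^-$. Since the coordinate projections $(t_1,\dots,t_k) \mapsto (t_i,t_{i+1})$ are continuous, each summand $\varphi^{\sigma(i)}(t_i,t_{i+1})$ depends continuously on $(t_1,\dots,t_k) \in \mathcal{B}_k$, and hence so does their finite sum $\psi$.

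Being a continuous function on a nonempty compact set, $\psi$ attains its supremum on $\mathcal{B}_k$; that is, there exists $(\bar t_1,\dots,\bar t_k) \in \mathcal{B}_k$ with $\psi(\bar t_1,\dots,\bar t_k) = c_k(A,B)$.
\end{proof}
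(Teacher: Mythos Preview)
Your proof is correct and follows the same approach as the paper: continuity of $\psi$ plus compactness of $\mathcal{B}_k$. The only cosmetic difference is that the paper's one-line proof invokes the \emph{differentiability} of $\varphi^{\sigma(i)}$ (Proposition~\ref{prop: diff of phi}) rather than the continuous-dependence Lemma~\ref{lem: continuous dependence}; your version spells out the compactness and nonemptiness of $\mathcal{B}_k$ in more detail, which is fine.
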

\begin{proof}
This follows from the continuity of $\varphi^{\sigma(i)}$ (in fact $\varphi^{\sigma(i)}$ is differentiable, Proposition \ref{prop: diff of phi}), and from the compactness of $\mathcal{B}_k$.
\end{proof}

To each interval $(\bar t_i, \bar t_{i+1})$ we associate
\[
u_i:= u_{\sigma(i)}(\cdot\,; \bar t_i, \bar t_{i+1}).
\]
In this way, it is defined on the whole $[A,B]$ a function
\begin{equation}\label{sol incollata}
u_{(A,B),k}(t) :=  u_i(t) \quad \text{if $t \in [\bar{t}_i,\bar{t}_{i+1}]$},
\end{equation}
which is a solution of \eqref{equation} in $(A,B) \setminus \{\bar t_1,\dots,\bar t_k\}$, and has exactly $k$ zeros in $(A,B)$. If we show that it is differentiable in each $\bar t_i$, then $u_{(A,B),k}$ will be a solution in the whole $(A,B)$. To prove the smoothness of $u_{(A,B),k}$, we wish to exploit the knowledge of the explicit expression of the derivatives of $\varphi^{\sigma(i)}$, given in Proposition \ref{prop: diff of phi}. Having this in mind, we observe that, if $(\bar t_1,\dots, \bar t_k)$ is an inner point of $\mathcal{B}_k$, then by maximality it results $\nabla \psi (\bar t_1,\dots,\bar t_k)=0$, where the partial derivatives of $\psi$ can be expressed in terms of the partial derivatives of $\varphi^{\sigma(i)}$. Therefore, the next step consists in the proof of the following lemma.
\begin{lemma}\label{prop: max interno}
There exists $H$, depending only on $L$ and on $p$, such that for any $(A,B)\subset\R$, $k \in \N$ with
\[
B-A \ge H(k+1),
\]
the corresponding maximizing partition $(\bar t_1,\dots,\bar t_k) \in \mathcal{B}_k$ is an inner point of $\mathcal{B}_k$, that is, $\bar t_{i+1}-\bar t_i >L$ for every $i$.
\end{lemma}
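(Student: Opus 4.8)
The plan is to argue by contradiction: suppose the maximizing partition $(\bar t_1,\dots,\bar t_k)$ lies on the boundary of $\mathcal{B}_k$, i.e. $\bar t_{j+1}-\bar t_j = L$ for some index $j$. The idea is that when $B-A$ is much larger than $(k+1)L$, the intervals cannot all be near the minimal length $L$; by a pigeonhole/averaging argument there must be some interval which is long (of length comparable to $(B-A)/(k+1)$, hence $\geq H$ for $H$ large). I would then show that we can transfer a small amount of length from this long interval to the short one, strictly increasing $\psi$, thereby contradicting maximality. The key quantitative input is the pair of two-sided estimates from Proposition \ref{key proposition resumed}: on an interval of length $\ell \ge L$ we have $-\underline\alpha\, \ell^3 \le \varphi^+ \le -\overline\alpha\, \ell^3$ and similarly for $\varphi^-$ with $\underline\beta,\overline\beta$; since $\psi$ is a sum of such terms, shrinking a short interval by a tiny bit costs at most $O(1)$ (the derivative of $\varphi^{\sigma(j)}$ is $\tfrac12\dot u^2$ at an endpoint, which is bounded by Lemma \ref{bound norm minimizers}, hence uniformly bounded independently of the data), whereas a long interval of length $\ell$ contributes roughly $-\text{const}\cdot \ell^3$, and lengthening it by the same tiny amount gains an amount of order $\ell^2 \gtrsim ((B-A)/(k+1))^2$, which dominates the fixed cost once $H$ is large enough.

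More concretely, here is the order of steps. First, since $\sum_{i=0}^k(\bar t_{i+1}-\bar t_i)=B-A\ge H(k+1)$ and one interval has length exactly $L$, the remaining $k$ intervals have total length $\ge H(k+1)-L$, so at least one of them, say $(\bar t_m,\bar t_{m+1})$ with $m\neq j$, has length $\ell_m \ge (H(k+1)-L)/k \ge H/2$ for $H\ge 2L$. Second, fix the configuration and consider the one-parameter family of partitions obtained by replacing $\bar t_j\mapsto \bar t_j + s$ (so the short interval grows to length $L+s$) and shifting one endpoint of the long interval so that its length becomes $\ell_m - s$ — with all intermediate $t_i$'s translated appropriately so only these two interval lengths change — for $s\in[0,\delta]$ small. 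Third, estimate $\psi$ along this family: the variation of the short interval's contribution is bounded below by $-C_0 s$ where $C_0$ is the uniform bound on $\tfrac12\dot u^2$ at endpoints (from Lemma \ref{bound norm minimizers}, $C_0 = \tfrac12(\|g\|_\infty+\|p\|_\infty)^2$); the variation of the long interval's contribution, using the upper bound $\varphi^{\sigma(m)}(\cdot) \le -\overline{c}\,(\cdot)^3$ with $\overline c = \min\{\overline\alpha,\overline\beta\}=\overline\beta$ together with $\varphi^{\sigma(m)}\ge -\underline c\,(\cdot)^3$, gives a net gain — here one must be slightly careful and instead use the differentiability (Proposition \ref{prop: diff of phi}) plus the lower bound to see that decreasing the long interval by $s$ changes its $\varphi$ by at least $+c_1 \ell_m^2 s$ for small $s$; actually the cleanest route is: $\varphi^{\sigma(m)}(\ell_m) - \varphi^{\sigma(m)}(\ell_m - s) \ge \overline{c}\,(\ell_m - s)^3 - \underline{c}\,\ell_m^3$ is the wrong sign, so one should instead integrate the derivative formula, $\tfrac12\dot u^2$ at the moving endpoint, which by the lower bound on $\varphi$ and the energy identity from the proof of Corollary \ref{lem:stime su norme} is bounded below by a constant times $\ell_m^2$. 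This last point is the crux.

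\textbf{The main obstacle} will be Step three: obtaining a \emph{lower} bound on the endpoint slope $\tfrac12\dot u_{\sigma(m)}^2(\bar t_m^+;\bar t_m,\bar t_{m+1})$ of order $\ell_m^2$, so that moving the common endpoint genuinely increases $\psi$. The two-sided bounds on $\varphi^\pm$ alone only control the \emph{integral} of the action, not the boundary derivative pointwise; one recovers the needed estimate from the identity $\varphi^{\sigma(m)} = -\tfrac12\|\dot u\|^2 + \int (g(u)u - G(u))$ established in Corollary \ref{lem:stime su norme}, which forces $\|\dot u\|^2 \gtrsim \ell_m^3$, combined with the equi-Lipschitz bound of Lemma \ref{bound norm minimizers} which caps $|\dot u|$ by an absolute constant on the \emph{rescaled} interval — translating back, $|\dot u|$ on $(\bar t_m,\bar t_{m+1})$ is $O(\ell_m^2)$, and since $\|\dot u\|^2 \gtrsim \ell_m^3$ forces $\dot u$ to be of size $\gtrsim \ell_m^2$ on a set of measure $\gtrsim \ell_m$, and in particular near the endpoints where (by concavity of $u$, since $\ddot u = p - g(u)$ is controlled) $|\dot u|$ is comparable to its maximum. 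Making this "comparable near the endpoint" assertion rigorous — perhaps via the scaled problem \eqref{scaled problem}, where $\dot{\wh u}(0^+) = \|\wh u\|_\infty/(\text{something } O(1))$ by the shape of the solution — is where the real work lies; once it is in hand, the choice $H$ large enough so that $c_1 (H/2)^2 > C_0$ closes the contradiction and proves that $\bar t_{i+1}-\bar t_i > L$ for all $i$.
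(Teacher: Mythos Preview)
Your approach has a genuine gap that you have not identified, in addition to the endpoint-slope lower bound that you correctly flag as the main obstacle. When you ``translate the intermediate $t_i$'s so only two interval lengths change,'' you are implicitly treating $\varphi^{\sigma(i)}(a,b)$ as a function of $b-a$ alone. It is not: since $p$ is non-autonomous, a rigid shift $(\bar t_l,\bar t_{l+1})\mapsto(\bar t_l+s,\bar t_{l+1}+s)$ changes $\varphi^{\sigma(l)}$ by $s\bigl[\tfrac12\dot u_l^2(\bar t_l^+)-\tfrac12\dot u_l^2(\bar t_{l+1}^-)\bigr]+O(s^2)$, and these contributions do not vanish. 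In fact, once you sum all the first-order pieces of your block shift you recover exactly $s\sum_{l=j+1}^{m}\partial_{t_l}\psi$, and the KKT conditions at the maximizer force this to be $\le 0$: any feasible infinitesimal perturbation off the face $\{\bar t_{j+1}-\bar t_j=L\}$ cannot increase $\psi$ to first order. So even if you had the endpoint-slope lower bound in hand, your perturbation cannot produce the strict increase you need; the gain you attribute to the long interval is necessarily cancelled by the translated intermediate ones. Restricting to an adjacent long interval avoids the translation issue, but pigeonhole does not give you an adjacent long interval, and propagating through the chain of neighbours via the slope inequalities only yields bounds that grow geometrically in $k$.

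The paper proceeds quite differently, with \emph{finite} (not infinitesimal) competitors and using only the cubic two-sided bounds on $\varphi^\pm$, never a pointwise lower bound on $\dot u$ at an endpoint. First (Lemma~\ref{lem: 6.1}) it replaces a single node $\bar t_i$ by a fixed convex combination of $\bar t_{i-1}$ and $\bar t_{i+1}$ and compares values of $\psi$; the cubic bounds turn this into a cubic polynomial inequality in the ratio $h=(\bar t_{i+1}-\bar t_i)/(\bar t_i-\bar t_{i-1})$ whose leading and constant coefficients are negative thanks to the choice \eqref{rel alpha beta}, forcing $h\in[1/\bar h,\bar h]$. Second (Lemma~\ref{lem: 6.2}) it compares the maximizer with a competitor obtained by splitting the longest interval into three pieces and simultaneously merging the shortest with its two neighbours (whose lengths are controlled by Lemma~\ref{lem: 6.1}); again the cubic bounds yield $\overline\lambda/\underline\lambda\le h^*$ uniformly in $k$. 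With $H:=h^*(L+1)$, pigeonhole gives $\overline\lambda\ge h^*(L+1)$, hence $\underline\lambda\ge L+1>L$, which is the statement. The point is that zeroth-order comparisons with carefully chosen competitors bypass both the KKT obstruction and the missing slope estimate.
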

We need two intermediate results. The first one says that the ratio between two adjacent sub-intervals of a maximizing partition can be controlled by means of a positive constant depending only on $L$ and on $p$.

\begin{lemma}\label{lem: 6.1}
Let $(\bar t_1,\dots,\bar t_k) \in \mathcal{B}_k$ be a maximizing partition for \eqref{max problem}. There exists $\bar h \ge 1$, depending only on $L$ and on $p$, such that
\[
\frac{1}{\bar h}(\bar t_{i}- \bar t_{i-1}) \le \bar t_{i+1}-\bar t_i \le \bar h (\bar t_i-\bar t_{i+1})
\]
for every $i=1,\dots,k$.
\end{lemma}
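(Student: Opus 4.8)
The plan is to play the maximality of $(\bar t_1,\dots,\bar t_k)$ against the competitor obtained by \emph{optimally} redistributing the total length of two adjacent subintervals. Write $\ell_i:=\bar t_{i+1}-\bar t_i\ge L$ for $i=0,\dots,k$, fix an index $i$, and—since the roles of $i-1$ and $i$ are interchangeable—assume $\ell_i\ge\ell_{i-1}$; it then suffices to bound $R:=\ell_i/\ell_{i-1}\ge 1$. Because consecutive signs differ, Proposition \ref{key proposition resumed} gives cubic bounds
\[
-\underline c_1\ell_{i-1}^3\le\varphi^{\sigma(i-1)}(\bar t_{i-1},\bar t_i)\le-\overline c_1\ell_{i-1}^3,\qquad
-\underline c_2\ell_i^3\le\varphi^{\sigma(i)}(\bar t_i,\bar t_{i+1})\le-\overline c_2\ell_i^3,
\]
with $\{\overline c_1,\overline c_2\}=\{\overline\alpha,\overline\beta\}$ and $\{\underline c_1,\underline c_2\}=\{\underline\alpha,\underline\beta\}$. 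Set $s:=\ell_{i-1}+\ell_i$ and $\theta^*:=\sqrt{\underline c_2}/(\sqrt{\underline c_1}+\sqrt{\underline c_2})\in(0,1)$, the minimizer of $x\mapsto\underline c_1 x^3+\underline c_2(s-x)^3$ on $[0,s]$; a one–variable computation gives the minimum value $\mu s^3$ with
\[
\mu:=\underline c_1(\theta^*)^3+\underline c_2(1-\theta^*)^3=\frac{\underline c_1\underline c_2}{(\sqrt{\underline c_1}+\sqrt{\underline c_2})^2}=\frac{\underline\alpha\underline\beta}{(\sqrt{\underline\alpha}+\sqrt{\underline\beta})^2}.
\]
By Remark \ref{rmk: su scelta alpha beta}—which is exactly the content of the calibration \eqref{rel alpha beta} of $\eps$—one has $\mu<\overline\beta\le\overline c_2$.

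\textbf{The admissible case.} Let $P^*$ be the partition obtained from $(\bar t_1,\dots,\bar t_k)$ by replacing $\bar t_i$ with $\bar t_{i-1}+\theta^*s$ and leaving the other nodes fixed, so that the two affected intervals have lengths $\theta^*s$ and $(1-\theta^*)s$. Assume for the moment that both are $\ge L$, i.e. $P^*\in\mathcal B_k$. Then $\psi(\bar t_1,\dots,\bar t_k)\ge\psi(P^*)$; cancelling the unaffected terms and inserting the upper bounds above on the left and the corresponding lower bounds on the right (legitimate since the new lengths are $\ge L$), one gets $-\overline c_1\ell_{i-1}^3-\overline c_2\ell_i^3\ge-\mu s^3$, that is $\overline c_1\ell_{i-1}^3+\overline c_2\ell_i^3\le\mu s^3$. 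Discarding the nonnegative term $\overline c_1\ell_{i-1}^3$ and writing $s=\ell_i(1+1/R)$ yields $\overline c_2\le\mu(1+1/R)^3$, hence $(1+1/R)^3\ge\overline c_2/\mu\ge\overline\beta/\mu>1$ and therefore $R\le\big((\overline\beta/\mu)^{1/3}-1\big)^{-1}$.

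\textbf{The non-admissible case and conclusion.} Since $\theta^*$ takes only finitely many values (according to the two possible configurations of $(\sigma(i-1),\sigma(i))$), $m:=\min\{\theta^*,1-\theta^*\}$ is a fixed positive number, and $P^*\in\mathcal B_k$ as soon as $s\ge L/m$ (because $\theta^*s,(1-\theta^*)s\ge ms\ge L$). If instead $s<L/m$, then $\ell_i\le s<L/m$ while $\ell_{i-1}\ge L$, so $R<1/m$ for free. Combining the two cases, and the symmetric discussion for $\ell_{i-1}\ge\ell_i$, the lemma holds with
\[
\bar h:=\max\Big\{1,\ \tfrac1m,\ \big((\overline\beta/\mu)^{1/3}-1\big)^{-1}\Big\},
\]
which depends only on $\underline\alpha,\underline\beta,\overline\beta$ and $L$, hence only on $L$ and $p$.

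\textbf{Main obstacle.} The routine parts are the one-dimensional optimisation producing $\mu$ and keeping track of which of $\overline\alpha,\overline\beta$ (resp. $\underline\alpha,\underline\beta$) labels which interval. The point requiring care is that the naive equal-length redistribution is too lossy when $R$ is large: one genuinely needs the energy-minimising split, and the whole argument rests on the margin $\mu<\overline\beta$ having been secured in advance through the choice \eqref{rel alpha beta} of $\eps$, which is precisely what renders the maximality inequality incompatible with an arbitrarily large ratio $R$.
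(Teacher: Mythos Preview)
Your proof is correct and follows essentially the same route as the paper's own argument: both replace $\bar t_i$ by the point that splits $(\bar t_{i-1},\bar t_{i+1})$ in the ratio $\sqrt{\underline c_2}:\sqrt{\underline c_1}$ (i.e.\ $\sqrt\nu:1$ or $1:\sqrt\nu$ according to the sign), compare via the cubic bounds of Proposition~\ref{key proposition resumed}, and invoke \eqref{rel alpha beta} (through Remark~\ref{rmk: su scelta alpha beta}) to force a contradiction when the ratio is too large. Your framing of $\theta^*$ as the exact minimiser of $x\mapsto \underline c_1 x^3+\underline c_2(s-x)^3$---yielding the closed-form $\mu=\underline\alpha\underline\beta/(\sqrt{\underline\alpha}+\sqrt{\underline\beta})^2=\underline\beta/(1+\sqrt{\underline\beta/\underline\alpha})^2$---makes transparent why that particular split is used and why \eqref{rel alpha beta} is precisely the inequality $\mu<\overline\beta$ needed to close the estimate; the paper instead writes out the resulting cubic polynomial in $h$ and checks the signs of its leading and constant coefficients, which is the same computation in a less conceptual guise.
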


\begin{proof}
For an arbitrary $i$, let $\lambda= \bar{t}_i - \bar t_{i-1}$ and $h\lambda= \bar t_{i+1}-\bar t_i$. We wish to show that $h$ is bounded from below and from above by two positive constants depending only on $L$ and on $p$. Let $\nu:=  \underline{\beta}/ \underline{\alpha}$, which belongs to $(0,1)$ by \eqref{sort alhpa beta}. If both $\lambda$ and $h\lambda$ are smaller than or equal to $L/\sqrt{\nu}$, then $\sqrt{\nu} \le h \le 1/\sqrt{\nu}$. Otherwise, at least one between $\lambda$ and $h\lambda$ is greater then $ L/\sqrt{\nu}$, so that
\begin{equation}\label{maggiorazione somma 2 int}
(1+h) \lambda > \left(1+ \frac{1}{\sqrt{\nu}} \right) L.
\end{equation}
Firstly, let us consider the case $\sigma(i-1)=+$, that is, $i-1$ is even. Let
\[
s:= \bar t_{i-1} + \frac{\sqrt{\nu}}{1+\sqrt{\nu}} (\bar t_{i+1}- \bar t_{i-1}) \in (\bar t_{i-1},\bar t_{i+1}).
\]
We consider the variation of $(\bar t_1,\dots,\bar t_k)$ obtained replacing $\bar t_i$ with $s$. This is an admissible partition in $\mathcal{B}_k$, as by \eqref{maggiorazione somma 2 int} we have
\begin{align*}
s-\bar t_{i-1} &= \frac{\sqrt{\nu}}{1+\sqrt{\nu}}(1+h)\lambda > \frac{\sqrt{\nu}}{1+\sqrt{\nu}} \left(1+ \frac{1}{\sqrt{\nu}}\right) L = L \\
\bar t_{i+1}-s &= \frac{1}{1+\sqrt{\nu}}(1+h)\lambda > \frac{1}{1+\sqrt{\nu}} \left(1+ \frac{1}{\sqrt{\nu}}\right) L > L.
\end{align*}
The variational characterization of $(\bar t_1,\dots, \bar t_k)$ implies that
\[
\psi(\bar t_1,\dots, \bar t_{i-1}, s , \bar t_{i+1},\dots, \bar t_k) \le \psi (\bar t_1,\dots, \bar t_{i-1}, \bar t_i , \bar t_{i+1},\dots, \bar t_k);
\]
by definition, this means
\[
\varphi^{\sigma(i-1)}(\bar t_{i-1},s) + \varphi^{\sigma(i)}(s,\bar t_{i+1}) \le \varphi^{\sigma(i-1)}(\bar t_{i-1},\bar t_i) + \varphi^{\sigma(i)}(\bar t_{i},\bar t_{i+1}).
\]
Therefore, recalling that we are considering the case $\sigma(i-1)=+$, by Proposition \ref{key proposition resumed} we deduce
\[
-\underline{\alpha} \left(\frac{\sqrt{\nu}}{1+\sqrt{\nu}}\right)^3 (1+h)^3 \lambda^3 - \underline{\beta} \left(\frac{1}{1+\sqrt{\nu}}\right)^3 (1+h)^3 \lambda^3 \le -\overline{\alpha}\lambda^3 -\overline{\beta} h^3\lambda^3,
\]
that is,
\begin{multline*}
\left[ \underline{\alpha} \left(\frac{\sqrt{\nu}}{1+\sqrt{\nu}}\right)^3 + \underline{\beta}\left(\frac{1}{1+\sqrt{\nu}}\right)^3-\overline{\beta}\right] h^3 \\
+ 3\left[ \underline{\alpha} \left(\frac{\sqrt{\nu}}{1+\sqrt{\nu}}\right)^3 + \underline{\beta}\left(\frac{1}{1+\sqrt{\nu}}\right)^3 \right] (h^2+h) \\
\left[ \underline{\alpha} \left(\frac{\sqrt{\nu}}{1+\sqrt{\nu}}\right)^3 + \underline{\beta}\left(\frac{1}{1+\sqrt{\nu}}\right)^3-\overline{\alpha}\right] \ge 0.
\end{multline*}
As observed in Remark \ref{rmk: su scelta alpha beta}, thanks to the choice \eqref{rel alpha beta}, the coefficient of $h^3$ and the last term are negative, so that this relation cannot be satisfied if $h$ is too small or too large: this implies that necessarily $1/\bar h_1 \le h \le \bar h_1$ for a positive constant $\bar h_1>1$, which depends only on $L$ and on $p$.

In case $\sigma(i-1)=-$, one can follow the same line of reasoning, replacing the previous definition of $s$ with
\[
s:= \bar t_{i-1} + \frac{1}{1+\sqrt{\nu}} (\bar t_{i+1}- \bar t_{i-1}) \in (\bar t_{i-1},\bar t_{i+1}).
\]
Again, the relation
\[
\psi(\bar t_1,\dots, \bar t_{i-1}, s , \bar t_{i+1},\dots, \bar t_k) \le \psi (\bar t_1,\dots, \bar t_{i-1}, \bar t_i , \bar t_{i+1},\dots, \bar t_k)
\]
implies that for the quantity $\bar h_1 >1$ previously introduced it results $1/\bar h_1 \le h \le \bar h_1$, and the desired result follows choosing $\bar h :=\max\{1/\sqrt{\nu},\bar h_1\}$.
\end{proof}
Now we can show that, in a maximizing partition, the ratio between the larger sub-interval and the smaller one is bounded by a constant depending only on $L$ and on $p$.
\begin{lemma}\label{lem: 6.2}
Let
\[
\underline{\lambda}:= \min_{i} \left(\bar t_{i+1} - \bar t_i \right) \quad \text{and} \quad \overline{\lambda}:= \max_i \left(\bar t_{i+1} - \bar t_i \right).
\]
Then there exists $h^* \ge 1$, depending only on $L$ and on $p$, such that
\[
\overline{\lambda} \le h^* \underline{\lambda}.
\]
\end{lemma}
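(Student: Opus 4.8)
The plan is to prove $\overline\lambda\le h^{*}\underline\lambda$ by a non‑local variation of the maximizing partition $(\bar t_1,\dots,\bar t_k)$, exploiting that the two–sided bound $-\underline c_{\sigma}(b-a)^3\le\varphi^{\sigma}(a,b)\le-\overline c_{\sigma}(b-a)^3$ (with $\underline c_{+}=\underline\alpha$, $\overline c_{+}=\overline\alpha$, and $\underline c_-=\underline\beta$, $\overline c_-=\overline\beta$) of Proposition~\ref{key proposition resumed} holds \emph{uniformly in the position of the interval}: it is valid for every forcing term in $\mathcal P$, and $\mathcal P$ is stable under the translations $p(\cdot+s)$ of $p$, so $\varphi^{\sigma}(a+s,b+s)$ obeys the same bound as $\varphi^{\sigma}(a,b)$, with constants depending only on the length $b-a$. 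If $k=0$ there is nothing to prove, so assume $k\ge1$; and if $\overline\lambda\le 2\underline\lambda$ we are done, so assume $\overline\lambda>2\underline\lambda$.

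Let $(\bar t_j,\bar t_{j+1})$ realise $\overline\lambda$ and $(\bar t_\ell,\bar t_{\ell+1})$ realise $\underline\lambda$; say $j<\ell$. For $0<\delta<\overline\lambda-L$, consider the competitor obtained by translating the partition points $\bar t_{j+1},\dots,\bar t_\ell$ to the left by $\delta$: this shrinks $(\bar t_j,\bar t_{j+1})$ to length $\overline\lambda-\delta$, enlarges $(\bar t_\ell,\bar t_{\ell+1})$ to length $\underline\lambda+\delta$, and leaves the \emph{lengths} of the intermediate subintervals unchanged (only their positions move). This competitor lies in $\mathcal B_k$, and maximality of $\psi$, together with the position‑independent bounds above, yields (writing $\lambda_i:=\bar t_{i+1}-\bar t_i$)
\[
\big[\overline c_{\sigma(j)}\overline\lambda^{3}-\underline c_{\sigma(j)}(\overline\lambda-\delta)^{3}\big]-\underline\alpha(\underline\lambda+\delta)^{3}\ \le\ 2\eps\sum_{i=j+1}^{\ell-1}\lambda_i^{3},
\]
where the right‑hand side bounds the net change of the $\varphi^{\sigma(i)}$ over the translated intermediate subintervals, each such difference being controlled by the width $2\eps$ of the sandwich.

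The core of the argument is to turn this into a bound on $\overline\lambda/\underline\lambda$ that is independent of $k$. On the left, choosing $\delta$ so as to maximise the gain (the optimal value being $\delta=(\overline\lambda-\underline\lambda)/2$, which is admissible since then $\overline\lambda-\delta=(\overline\lambda+\underline\lambda)/2\ge L$) produces a gain $\ge c(\eps)\,\overline\lambda^{3}$ with $c(\eps)>0$ as soon as $\underline\lambda/\overline\lambda$ lies below an explicit threshold close to $(4\overline c_{\sigma(j)}/\underline c_{\sigma(j)})^{1/3}-1$; on the other hand the error term must be kept smaller than this gain. Here Lemma~\ref{lem: 6.1} is used twice: it controls how fast consecutive lengths may vary, so that up to a fixed factor $\bar h$ the subinterval realising $\overline\lambda$ may be replaced by an adjacent one of the same sign, and one transfers not to the global minimum but to a subinterval of that \emph{same} sign which is already a definite factor shorter and is reached within a bounded number of steps — over that short block the error $2\eps\sum\lambda_i^{3}$ is $O(\eps\,\overline\lambda^{3})$, hence strictly below the gain once $\eps$ is fixed small enough (depending only on $g$ and $p$). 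Carrying this out for the "$+$" and the "$-$" subintervals separately, and then linking a "$+$" and a "$-$" subinterval through an adjacent pair via Lemma~\ref{lem: 6.1}, gives $\overline\lambda\le h^{*}\underline\lambda$ with $h^{*}$ depending only on $L$ and $p$.

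I expect the main obstacle to be precisely this last step. A single local variation only compares neighbouring subintervals (that is Lemma~\ref{lem: 6.1}), and iterating it naively lets the ratio grow like $\bar h^{k}$; a genuinely non‑local transfer of length, on the other hand, forces one to displace a whole block of intermediate subintervals, producing an error that a priori sums over all of them. Reconciling the two — transferring far enough to reach a strictly shorter subinterval while keeping the displaced block, and hence the accumulated error, of bounded length — is the delicate bookkeeping, and it rests essentially on the uniformity over $\mathcal P$ of the estimates of Section~\ref{sec: BVP large}.
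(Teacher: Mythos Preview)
Your translation variation is set up correctly, and the inequality
\[
\big[\overline c_{\sigma(j)}\overline\lambda^{3}-\underline c_{\sigma(j)}(\overline\lambda-\delta)^{3}\big]-\underline\alpha(\underline\lambda+\delta)^{3}\ \le\ 2\eps\sum_{i=j+1}^{\ell-1}\lambda_i^{3}
\]
does follow from maximality and the sandwich bounds. The gap is in what you do with the right-hand side. You write that Lemma~\ref{lem: 6.1} lets you ``transfer not to the global minimum but to a subinterval of that same sign which is already a definite factor shorter and is reached within a bounded number of steps''. This is not something Lemma~\ref{lem: 6.1} provides: that lemma only says consecutive lengths differ by at most a factor $\bar h$, and it is perfectly consistent with all $\lambda_i$ being \emph{equal} for arbitrarily many consecutive $i$. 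There is no mechanism forcing a strictly shorter subinterval to appear within a bounded block. Consequently the number $\ell-j-1$ of displaced intermediate subintervals cannot be bounded independently of $k$, the error term $2\eps\sum_{i=j+1}^{\ell-1}\lambda_i^{3}$ may be of order $k\,\eps\,\overline\lambda^{3}$, and since $\eps>0$ is fixed once and for all at the beginning of the section (via \eqref{rel alpha beta}) you cannot absorb it. The ``delicate bookkeeping'' you anticipate is not bookkeeping but a genuine obstruction: any variation that rigidly shifts a block of partition points accumulates a position-error over that whole block.

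The paper sidesteps this entirely by using a variation that touches \emph{no} intermediate subintervals. Instead of sliding points, it inserts two new points $s_1,s_2$ inside the longest interval (splitting it into three pieces of lengths $\bar\sigma\overline\lambda$, $(1-2\bar\sigma)\overline\lambda$, $\bar\sigma\overline\lambda$, with $\bar\sigma$ an explicit constant chosen so that the sign pattern is respected) and simultaneously deletes two points adjacent to the shortest interval (merging three consecutive subintervals into one). The number of partition points is preserved, so the competitor stays in $\mathcal B_k$; Lemma~\ref{lem: 6.1} is used only to say that the merged interval has length at most $(2\bar h+1)\underline\lambda$. Maximality then gives directly
\[
\big[\overline\alpha - 2\underline\alpha\,\bar\sigma^{3} - \underline\beta\,(1-2\bar\sigma)^{3}\big]\,\overline\lambda^{3} \ \le\ \underline\alpha\,(2\bar h+1)^{3}\,\underline\lambda^{3},
\]
with the bracket positive by the choice \eqref{rel alpha beta}. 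There is no sum over intermediate intervals and hence no $k$-dependence. This ``split--merge'' idea is the missing ingredient in your approach.
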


\begin{proof}
Let us denote with $i \neq j$, $0 \le i,j\le k$,  two indexes such that
\[
\overline{\lambda}=\bar t_{i+1} - \bar t_i \quad \text{and} \quad \underline{\lambda}= \bar t_{j+1}-\bar t_j.
\]
To fix the ideas we consider the case $i<j$. As the previous lemma asserts that the length of any interval is comparable with the one of its neighbors, we can assume without loss of generality $i$ and $j$ to be even, $k\geq5$ and $j-i \ge 4$, i.e. $i+2 \le j-2$. Let us set again $\nu :=\underline{\beta}/\underline{\alpha}$, and let
\[
\bar \sigma:= \frac{1}{2}\left( \frac{1}{2}+\frac{1}{\sqrt[3]{2}} \right) \frac{\sqrt{\nu}}{1+\sqrt{\nu}}.
\]
If $\overline{\lambda} \le \max \{ L /\bar \sigma , L/(1-2\bar \sigma) \}$, we can choose $h^*= \max \{ 1/\bar \sigma, 1/(1-2\bar \sigma) \}$. Otherwise, we consider a variation of $(\bar t_1,\dots,\bar t_k)$ introducing two points
\[
s_1:= \bar t_i + \bar \sigma (\bar t_{i+1}-\bar t_i) \quad \text{and} \quad s_2:= \bar t_i + (1-\bar \sigma) (\bar t_{i+1}-\bar t_i).
\]
between $\bar t_i$ and $\bar t_{i+1}$, and eliminating $\bar t_j$ and $\bar t_{j+1}$ if $j<k$; if $j=k$, we eliminate $\bar t_{k-1}$ and $\bar t_k$. For the reader's convenience, we explicitly observe that, since $\nu \in (0,1)$, it results $\bar t_i <s_1<s_2<\bar t_{i+1}$.
\begin{center}
\begin{tikzpicture}[scale=0.6]
\draw(-5,-1)--(4,-1);
\draw (-4.8,-1)  .. controls (-3.8,3) and (-2,3)..(-1,-1);
\draw (0,-1) .. controls (0.3,1.6) and (1,1.6).. (1.3,-1);
\draw (1.3,-1) .. controls (1.5,-3.3) and (2,-3.6).. (2.2,-1);
\draw (2.2,-1) .. controls (2.6,1.9) and (3.3,1.9).. (3.9,-1);
\draw (-4.8,-1.4)[font=\footnotesize] node{$\bar t_i$};
\draw (-1,-1.4)[font=\footnotesize] node{$\bar t_{i+1}$};
\draw (0,-0.7)[font=\footnotesize] node{$\bar t_{j-1}$};
\draw (1.1,-1.4)[font=\footnotesize] node{$\bar t_{j}$};
\draw (2.3,-0.7)[font=\footnotesize] node{$\bar t_{j+1}$};
\draw (3.9,-1.4)[font=\footnotesize] node{$\bar t_{j+2}$};
\end{tikzpicture}
\quad \begin{tikzpicture}[scale=0.6]
\draw(-5,-1)--(4,-1);
\draw (-4.9,-1)  .. controls (-4.4,2) and (-4.3,2)..(-3.6,-1);
\draw (-3.6,-1)  .. controls (-3.3,-3.6) and (-2.7,-3.8)..(-2.3,-1);
\draw (-2.3,-1)  .. controls (-2.1,2.1) and (-1.5,2)..(-1,-1);
\draw (0,-1) .. controls (1.2,2.5) and (2.4,2.5).. (3.6,-1);
\draw (-4.8,-1.4)[font=\footnotesize] node{$\bar t_i$};
\draw (-1,-0.7)[font=\footnotesize] node{$\bar t_{i+1}$};
\draw (0.1,-1.4)[font=\footnotesize] node{$\bar t_{j-1}$};
\draw (3.7,-1.4)[font=\footnotesize] node{$\bar t_{j+2}$};
\draw (-3.3,-0.7)[font=\footnotesize] node{$s_1$};
\draw (-2,-1.4)[font=\footnotesize] node{$s_2$};
\end{tikzpicture}
\end{center}
In what follows, the notation corresponds to the case $j<k$. \\
As $\overline{\lambda}>\max \{ L /\bar \sigma , L/(1-2\bar \sigma) \}$, the new partition is in $\mathcal{B}_k$: indeed
\begin{equation}\label{exp intervallini}
\begin{split}
s_1- \bar t_i = \bar \sigma  (\bar t_{i+1}-\bar t_i) = \bar \sigma \overline{\lambda}  &> L \\
s_2-s_1  = (1-2 \bar \sigma) (\bar t_{i+1}-\bar t_i) = (1-2\bar \sigma) \overline{\lambda} &> L \\
\bar t_{i+1}-s_2 = \bar \sigma  (\bar t_{i+1}-\bar t_i) = \bar \sigma \overline{\lambda} &> L.
\end{split}
\end{equation}
As a consequence, by maximality,
\[
\psi(\bar t_1, \dots,\bar t_{i},s_1,s_2,\bar t_{i+1}, \dots, \bar t_{j-1}, \bar t_{j+2}, \dots, \bar t_k)
\le \psi (\bar t_1,\dots,\bar t_k),
\]
that is,
\begin{multline*}
\varphi^+(\bar t_i,s_1) + \varphi^-(s_1,s_2) + \varphi^+(s_2,\bar t_{i+1}) + \varphi^+(\bar t_{j-1}, \bar t_{j+2}) \\
\le \varphi^+(\bar t_i, \bar t_{i+1}) + \varphi^+(\bar t_{j-1},\bar t_j) + \varphi^-(\bar t_j,\bar t_{j+1}) + \varphi^+(\bar t_{j+1}, \bar t_{j+2}).
\end{multline*}
We know that $\varphi^+(\bar t_i, \bar t_{i+1}) \le -\overline{\alpha} \overline{\lambda}^3$, and the other terms on the right hand side are negative; on the other hand, for the left hand side we can use the expressions \eqref{exp intervallini} and the fact that, by Lemma \ref{lem: 6.1}, $\bar t_{j+2}- \bar t_{j-1}  \le (2 \bar h +1 ) \underline{\lambda}$. Therefore
\[
- 2 \underline{\alpha} \bar{\sigma}^3 \overline{\lambda}^3 - \underline{\beta} (1-2 \bar \sigma)^3 \overline{\lambda}^3- \underline{\alpha} (2\bar h +1)^3 \underline{\lambda}^3 \le -\overline{\alpha} \overline{\lambda}^3,
\]
which gives
\[
\left[\overline{\alpha} - 2\underline{\alpha} \bar \sigma^3  - \underline{\beta}(1-2\bar \sigma)^3    \right]\left(\frac{\overline{\lambda}}{\underline{\lambda}}\right)^3 \le \underline{\alpha}(2 \bar h +1)^3.
\]
We claim that
\begin{equation*}
\overline{\alpha} - 2\underline{\alpha} \bar \sigma^3  - \underline{\beta}(1-2\bar \sigma)^3  > 0;
\end{equation*}
as a consequence, the thesis will follows. To show the claim, we note that, by definition of $\bar \sigma$, it results
\[
2 \bar \sigma^3 < \left(\frac{\sqrt{\nu}}{1+\sqrt{\nu}} \right)^3 \quad \text{and} \quad (1-2\bar \sigma)^3 < \left(\frac{1}{1+\sqrt{\nu}} \right)^3;
\]
Thanks to the choice \eqref{rel alpha beta}, recalling also Remark \ref{rmk: su scelta alpha beta}, we easily deduce
\[
\overline{\alpha} - 2\underline{\alpha} \bar \sigma^3  - \underline{\beta}(1-2\bar \sigma)^3 > \overline{\alpha} - \underline{\alpha} \left(\frac{\sqrt{\nu}}{1+\sqrt{\nu}}\right)^3 - \underline{\beta}\left(\frac{1}{1+\sqrt{\nu}}\right)^3 >0,
\]
which completes the proof.
\end{proof}
\begin{proof}[End of the proof of Lemma \ref{prop: max interno}]
Let $H= h^*(L+1)$, with $h^*$ introduced in Lemma \ref{lem: 6.2}. Then any partition of an interval of length $B-A \ge H(k+1)$ in $k+1$ sub-intervals has a sub-interval larger than $h^*(L+1)$, and in particular $\overline{\lambda} \ge h^*(L+1)$. Applying Lemma \ref{lem: 6.2}, we immediately deduce $\underline{\lambda} \ge  L+1$.
\end{proof}

We are ready to prove the existence of sign-changing solutions of \eqref{equation} in large intervals.

\begin{proposition}\label{prop: sign-changing in bounded}
There exists $H$, depending only on $L$ and on $p$, such that if $B-A \ge H(k+1)$ and $(\bar t_1,\dots, \bar t_k)$ is a maximizing partition for \eqref{max problem}, then the function $u_{(A,B),k}$ defined by \eqref{sol incollata} is a solution of \eqref{equation}.
\end{proposition}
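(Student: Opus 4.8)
The plan is to show that the juxtaposed function $u_{(A,B),k}$, which by its very construction already solves \eqref{equation} on $(A,B)\setminus\{\bar t_1,\dots,\bar t_k\}$ and vanishes at each $\bar t_i$, is of class $\mathcal{C}^1$ across every corner $\bar t_i$; once this is known, the identity $\ddot u = p-g(u)$, with $p$ continuous and $g\in\mathcal{C}^1$ by \eqref{h1}, shows that $\dot u$ equals an integral of a continuous function near $\bar t_i$, hence is itself $\mathcal{C}^1$ there, so that $u_{(A,B),k}\in\mathcal{C}^2(A,B)$ and it is a classical solution of \eqref{equation} on the whole interval.

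First I would fix the constant of the statement: set $H:=h^*(L+1)$, with $h^*$ as in Lemma \ref{lem: 6.2}, so that Lemma \ref{prop: max interno} applies whenever $B-A\ge H(k+1)$ and the maximizing partition $(\bar t_1,\dots,\bar t_k)$ is an interior point of $\mathcal{B}_k$; in particular $\bar t_{i+1}-\bar t_i>L\ (\ge\tilde L)$ for every $i=0,\dots,k$. Consequently each map $(a,b)\mapsto\varphi^{\sigma(i)}(a,b)$ is of class $\mathcal{C}^1$ in a neighbourhood of $(\bar t_i,\bar t_{i+1})$ — by Proposition \ref{prop: diff of phi} for the $\varphi^+$ terms, and by the same result together with Remark \ref{rmk: sul pb negativo} for the $\varphi^-$ terms. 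Hence $\psi$ is $\mathcal{C}^1$ near $(\bar t_1,\dots,\bar t_k)$, and maximality forces $\nabla\psi(\bar t_1,\dots,\bar t_k)=0$.

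Next I would read off the components of this gradient. Only the two summands $\varphi^{\sigma(i-1)}(t_{i-1},t_i)$ and $\varphi^{\sigma(i)}(t_i,t_{i+1})$ of $\psi$ depend on $t_i$, so Proposition \ref{prop: diff of phi} and its negative counterpart give
\[
0=\frac{\partial\psi}{\partial t_i}(\bar t_1,\dots,\bar t_k)=\frac{\partial\varphi^{\sigma(i-1)}}{\partial b}(\bar t_{i-1},\bar t_i)+\frac{\partial\varphi^{\sigma(i)}}{\partial a}(\bar t_i,\bar t_{i+1})=-\tfrac12\dot u_{i-1}^2(\bar t_i^-)+\tfrac12\dot u_i^2(\bar t_i^+),
\]
where $u_j=u_{\sigma(j)}(\cdot\,;\bar t_j,\bar t_{j+1})$ as in \eqref{sol incollata}. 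Thus $|\dot u_{i-1}(\bar t_i^-)|=|\dot u_i(\bar t_i^+)|$ for every $i=1,\dots,k$. To upgrade this to an honest matching of the one-sided derivatives I would use a sign argument: if $i$ is odd, then $u_{i-1}=u_+(\cdot\,;\bar t_{i-1},\bar t_i)$ is positive to the left of $\bar t_i$ and vanishes there, so $\dot u_{i-1}(\bar t_i^-)\le0$, while $u_i=u_-(\cdot\,;\bar t_i,\bar t_{i+1})$ is negative to the right of $\bar t_i$ and vanishes there, so $\dot u_i(\bar t_i^+)\le0$; two nonpositive numbers with equal squares coincide. The case $i$ even is identical, with both one-sided derivatives nonnegative. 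Since moreover $u_{i-1}(\bar t_i)=0=u_i(\bar t_i)$, both $u_{(A,B),k}$ and its derivative are continuous at each $\bar t_i$, i.e. $u_{(A,B),k}\in\mathcal{C}^1(A,B)$; the regularity bootstrap through the equation then gives $u_{(A,B),k}\in\mathcal{C}^2(A,B)$, concluding the proof.

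I expect the only genuine difficulty to be the opening reduction, namely ensuring that the maximizing partition is interior so that the first-order condition $\nabla\psi=0$, combined with the explicit derivative formulas of Proposition \ref{prop: diff of phi}, becomes available; this is exactly what the preceding chain of estimates (Lemmas \ref{lem: 6.1}, \ref{lem: 6.2} and \ref{prop: max interno}) has been arranged to supply. Once the partition is known to be interior, both the disappearance of the corners and the final bootstrap are routine.
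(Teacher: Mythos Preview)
Your argument is correct and follows the same route as the paper: use Lemma \ref{prop: max interno} to place the maximizing partition in the interior of $\mathcal{B}_k$, invoke $\nabla\psi=0$ together with the derivative formulas of Proposition \ref{prop: diff of phi} to equate the squared one-sided speeds, and conclude $\mathcal{C}^1$ (hence $\mathcal{C}^2$) regularity across each $\bar t_i$. You have in fact spelled out more than the paper does---the explicit sign argument showing that $\dot u_{i-1}(\bar t_i^-)$ and $\dot u_i(\bar t_i^+)$ share a sign (so that equality of their squares gives equality) is left implicit in the paper's ``But then $u_{(A,B),k}$ is $\mathcal{C}^1$''.
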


\begin{proof}
By construction, $u_{(A,B),k}$ solves \eqref{equation} in $(A,B) \setminus \{\bar t_1,\dots, \bar t_k\}$. Moreover, by Lemma \ref{prop: max interno}, $(\bar t_1,\dots, \bar t_k)$ is a free critical point of the function $\psi$, so that $\nabla \psi (\bar t_1,\dots, \bar t_k)=0$. In view of Proposition \ref{prop: diff of phi}, this writes
\[
-\frac{1}{2} \dot{u}_{i-1}^2(\bar t_{i}^-) +\frac{1}{2} \dot{u}_{i}^2(\bar t_{i}^+) = 0 \qquad i=1,\dots,k.
\]
But then  $u_{(A,B),k}$ is $\mathcal{C}^1$ across each $\bar t_{i}$, and the proposition follows.
\end{proof}

\begin{remark}\label{rem: stime finali}
Directly from the construction of $u_{(A,B),k}$, it is possible to obtain some estimates which will be useful in the next proof; we keep here the notation previously introduced. First of all, we note that for every $t \in (A,B)$ there exists $i$ such that $t \in [\bar t_i, \bar t_{i+1})$. Thanks to Lemma \ref{bound norm minimizers}, we deduce that
\begin{align*}
|u_{(A,B),k}(t)| & = |u_i(t)|  \le C (\bar t_{i+1}- \bar t_i)^2 \le C \overline{\lambda}^2 \\
|\dot{u}_{(A,B),k}(t)| & = |\dot{u}_i(t)| \le C (\bar t_{i+1}- \bar t_i) \le  C \overline{\lambda},
\end{align*}
where $C$ is a positive constant depending only on $g$ and $p$. As a consequence
\[
\|u_{(A,B),k}\|_{L^\infty(A,B)} \le C \overline{\lambda}^2 \quad \text{and} \quad \|\dot{u}_{(A,B),k}\|_{L^\infty(A,B)}  \le C \overline{\lambda}.
\]
On the other hand, let $\tau$ be a point of maximum of $|u_{(A,B),k}|$. There exists $j \in \{0,\dots,k\}$ such that $\tau \in (\bar t_j,\bar t_{j+1})$, so that by Corollary \ref{lem:stime su norme} it results
\[
\|u_{(A,B),k}\|_{L^\infty(A,B)} = | u_j(\tau)| \ge C_1(\bar t_{j+1}-\bar t_j) \ge C_1 \underline{\lambda},
\]
where $C_1$ is a positive constant depending only on $g$ and $p$.
\end{remark}

It is now possible to complete the proof of the main result.

\begin{proof}[Proof of Theorem \ref{thm: main thm}] For a fixed $L>\bar L$, let $\bar h$, $h^*$ and $H$ be as in Lemmas \ref{lem: 6.1}, \ref{lem: 6.2} and Proposition \ref{prop: sign-changing in bounded} respectively. Let $\mu \ge H$ be fixed (we explicitly remark that $h^*$ is independent of $\mu$). For every $n \in \N$ we have $2n\mu \ge 2n H$, so that by Proposition \ref{prop: sign-changing in bounded} there exists $u_{\mu,n}:= u_{(-\mu n,\mu n),2n-1}$ which is a solution of \eqref{equation} in $(-\mu n,\mu n)$ with $2n-1$ zeros, and its zeros correspond to a partition
\[
-\mu n=: \bar t_0 < \bar t_1 < \dots, \bar t_{2n-1}<\bar t_{2n} := \mu n,
\]
maximizing for $c_{2n-1}(-\mu n,\mu n)$, defined by \eqref{max problem}. At least one of the sub-intervals of the partition has to be smaller than or equal to $\mu$; recalling that $\underline{\lambda}:= \min_i (\bar t_{i+1} -\bar t_i)$ and $\overline{\lambda} = \max_i (\bar t_{i+1}-\bar t_i)$, it results $\underline{\lambda} \le \mu$; this implies, by means of Lemma \ref{lem: 6.2}, that $\overline{\lambda} \le h^* \mu$, where $h^*$ does not depend on $n$ or on $\mu$. Analogously, from the fact that at least one of the sub-intervals of the partition has to be larger than or equal to $\mu$, it is possible to deduce that $\underline{\lambda} \ge \mu/h^*$.

By using the estimates of Remark \ref{rem: stime finali}, it is immediate to obtain
\[
C_1 \left( \frac{\mu}{h^*} \right)^2 \le \| u_{\mu,n}\|_{L^\infty(-\mu n,\mu n)} \le C (h^* \mu)^2 \quad \text{and} \quad \| \dot u_{\mu,n}\|_{L^\infty(-\mu n,\mu n)} \le C (h^* \mu).
\]
Furthermore, being $u_{\mu,n}$ a solution of \eqref{equation}, it results
\[
\|\ddot{u}_{\mu,n}\|_{L^\infty(-\mu n,\mu n)} \le \|g\|_\infty+\|p\|_\infty.
\]
The previous estimates reveals that the sequence $(u_{\mu,n})_{n \in \N}$ is uniformly bounded in $W^{2,\infty}_{\loc}(\R)$, so that by the Ascoli-Arzel\`a theorem it converges in $\mathcal{C}^1_{\loc}(\R)$, up to a subsequence, to a function $u_{\mu}$ which is a solution of \eqref{equation} in the whole $\R$, and satisfies
\begin{equation}\label{uniform norm of bounded sol}
C_1 \left( \frac{\mu}{h^*} \right)^2 \le \| u_{\mu}\|_{L^\infty(\R)} \le C (h^* \mu)^2 \quad \text{and} \quad \| \dot u_{\mu}\|_{L^\infty(\R)} \le C (h^* \mu)
\end{equation}
By construction, $u_\mu$ has infinitely many zeros tending to infinity in both the directions; indeed, if this were not true, then $|u_\mu(t)| \ge C>0$ on an interval of length greater than $h^* \mu$, and by the $\mathcal{C}^1_{\loc}$ convergence the same should hold also for $u_{\mu,n}$ when $n$ is sufficiently large, which is not possible.

We have constructed a solution of \eqref{equation} defined in $\R$, which is bounded together with its first derivative. Now, we can obtain the sequence of bounded solutions $u_m=u_{\mu_m}$ simply repeating the same procedure for a sequence of parameters $\mu_m$ such that $\mu_m \to +\infty$ and \[
\mu_m > \sqrt{\frac{C}{C_1}} (h^*)^2 \mu_{m-1}
\]
for every $m$. Indeed, thanks to equation \eqref{uniform norm of bounded sol}, we deduce
\[
\| u_{m-1}\|_{L^\infty(\R)} \le C (h^* \mu_{m-1})^2 < C_1 \left(\frac{\mu_m}{h^*}\right)^2 \le \| u_{m}\|_{L^\infty(\R)},
\]
so that $u_{m-1} \not \equiv u_m$ and $\|u_m\|_\infty \to +\infty$ as $m \to \infty$.
\end{proof}

To conclude, as we mentioned in the introduction, we turn to the periodic framework. We keep the previous
notations, in particular $H$ is defined as in Lemma \ref{prop: max interno}. We have the following.
\begin{theorem}\label{thm: esistenza sub}
Let $g$ satisfy \eqref{h1}, and let $p$ be a continuous $T$-periodic function such that
\[
g_-<A(p) = \frac{1}{T} \int_0^T p(t)\,dt<g_+.
\]
Then, for any $(k,n) \in \N^2$ with $k$ odd and $n T \ge H(k+1)$, there exist a $nT$-periodic solution of \eqref{equation}, having exactly $k$ zeros in each interval of periodicity.
\end{theorem}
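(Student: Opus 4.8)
The plan is to rerun the construction of Section~\ref{sec: sign-change} on the circle $\R/nT\Z$ in place of a bounded interval $[A,B]$. First note that a continuous $T$-periodic $p$ satisfies \eqref{h2}, with $A(p)=\frac{1}{T}\int_0^T p$, so every result of Sections~\ref{sec: existence minimi}--\ref{sec: sign-change} is available; keep the notation of the proof of Theorem~\ref{thm: main thm}, so $L$ is fixed large enough that Propositions~\ref{prop:existence}--\ref{prop: diff of phi} and the estimates of Section~\ref{sec: sign-change} hold on every interval of length $\ge L$, and $\bar h,h^*,H$ are the constants of Lemmas~\ref{lem: 6.1}, \ref{lem: 6.2} and~\ref{prop: max interno}. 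Given $k$ odd and $n$ with $nT\ge H(k+1)$, consider the \emph{cyclic} partitions of $\R/nT\Z$ into $k+1$ arcs: cut points $t_0\le t_1\le\dots\le t_k$, with $t_{k+1}:=t_0+nT$, subject to $t_{i+1}-t_i\ge L$ for $i=0,\dots,k$ (feasible, since $nT\ge H(k+1)>L(k+1)$). On the arc $(t_i,t_{i+1})$ put the minimizer $u_{\sigma(i)}(\cdot\,;t_i,t_{i+1})$ of $J_{(t_i,t_{i+1}),p}$ provided by Proposition~\ref{key proposition resumed}, with $\sigma(i)=+$ for $i$ even and $\sigma(i)=-$ for $i$ odd; the forcing on the last arc wraps around consistently because $p$ is $nT$-periodic. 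Here the parity of $k$ enters: $k$ odd means $k+1$ even and $\sigma(k)=-$, so the signs alternate all the way around the loop with no clash at the seam $t_0\equiv t_0+nT$, and the glued function vanishes at exactly the $k+1$ cut points, that is, at exactly $k$ points in the open interval $(t_0,t_0+nT)$.

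Second, run the maximization. Set $\psi(t_0,\dots,t_k):=\sum_{i=0}^k\varphi^{\sigma(i)}(t_i,t_{i+1})$ with $t_{k+1}=t_0+nT$. By Proposition~\ref{prop: diff of phi} and Remark~\ref{rmk: sul pb negativo}, applied on a fixed window $A<t_i<B$ (e.g.\ $A=-1$, $B=2nT+1$), $\psi$ is $\mathcal C^1$ in the cut points; since the $nT$-translation $t_i\mapsto t_i+nT$ is a symmetry of $\psi$, it descends to the compact set of admissible cyclic partitions, and a maximizer $(\bar t_0,\dots,\bar t_k)$ exists. In the ``rotation'' directions there is no genuine boundary, so the maximizer can fail to be a free critical point of $\psi$ only if some arc has the minimal length $L$. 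To rule this out, invoke Lemmas~\ref{lem: 6.1} and~\ref{lem: 6.2}: their proofs are, respectively, local and semi-local variational perturbations of a single cut point (or of a pair of far-apart arcs), and carry over verbatim with all indices read cyclically, giving $\overline\lambda\le h^*\underline\lambda$ for the maximal and minimal arc lengths. As $nT\ge H(k+1)$ forces $\overline\lambda\ge H=h^*(L+1)$, we get $\underline\lambda\ge L+1>L$, so the maximizer is interior.

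Third, read off smoothness exactly as in Proposition~\ref{prop: sign-changing in bounded}. At the interior maximizer $\nabla\psi=0$, i.e.\ $\partial\psi/\partial t_j=0$ for $j=0,\dots,k$. For $1\le j\le k$, Proposition~\ref{prop: diff of phi} turns this into $-\frac12\dot u_{j-1}^2(\bar t_j^-)+\frac12\dot u_j^2(\bar t_j^+)=0$, so the glued function is $\mathcal C^1$ across each $\bar t_j$; for $j=0$, since $t_0$ is the left endpoint of arc $0$ and, through $t_{k+1}=t_0+nT$, the right endpoint of arc $k$, the equation reads $\frac12\dot u_0^2(\bar t_0^+)-\frac12\dot u_k^2((\bar t_0+nT)^-)=0$, which is precisely the $\mathcal C^1$-matching at the seam (both one-sided derivatives are positive, $u_0>0$ on the right and $u_k<0$ on the left of $\bar t_0$, so equality of the squares forces equality). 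Hence the glued function, extended $nT$-periodically, is $\mathcal C^1$, therefore $\mathcal C^2$ since it classically solves \eqref{equation} off the cut points, and it is an $nT$-periodic solution of \eqref{equation} with exactly $k$ zeros in each period. The estimates of Remark~\ref{rem: stime finali} (together with $L$ being large enough that no arc minimizer is trivial) confirm that the solution genuinely oscillates with the asserted $k$ zeros per period; letting $(k,n)$ range over admissible pairs then yields, as announced in the introduction, infinitely many subharmonic solutions.

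Finally, I expect the only delicate point to be the seam: ensuring that the sign pattern closes up — which is exactly the role of ``$k$ odd'' — and that the one extra critical-point equation $\partial\psi/\partial t_0=0$ supplies the missing $\mathcal C^1$ condition there. Everything else is a transcription of Section~\ref{sec: sign-change}, the only genuinely new verification being that Lemmas~\ref{lem: 6.1}--\ref{prop: max interno} are insensitive to replacing the interval by the circle.
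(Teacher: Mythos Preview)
Your proposal is correct and follows essentially the same route as the paper: let $t_0$ vary together with $t_1,\dots,t_k$, maximize $\psi$ over admissible cyclic partitions, use $nT\ge H(k+1)$ and (the cyclic version of) Lemmas~\ref{lem: 6.1}--\ref{prop: max interno} to show the length constraints are inactive, and read off the $\mathcal C^1$-matching at every cut point---including the seam---from $\nabla\psi=0$ via Proposition~\ref{prop: diff of phi}. The only cosmetic difference is in how you handle compactness in the $t_0$-direction: you quotient by the $nT$-translation symmetry, whereas the paper simply restricts $t_0\in[-T,2T]$ and uses the $T$-periodicity of $p$ to assume the maximizer has $\bar t_0\in[0,T)$, hence interior; your version is slightly more careful in spelling out why the sign pattern closes up ($k$ odd) and why equality of the squared derivatives at the seam forces equality of the derivatives themselves.
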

\begin{remark}
The nodal characterization of the solutions ensures that, whenever $T$ is
the minimal period of $p$, and  $n$ and $k+1$ are coprime integers, then $nT$ is the minimal period of the corresponding solution. This ensures the existence of an infinite sequence of subharmonic solutions, with diverging minimal period.
\end{remark}
\begin{proof}
Let
\[
\mathcal{A}_k:=\left\{(t_0,t_1,\dots,t_k) \in \R^k \left| \begin{array}{l} t_0 \le t_1 \le \dots \le t_k \le t_{k+1}:=t_0+nT,\\
t_{i+1}-t_i \ge L, \ t_0 \in [-T,2T] \end{array} \right.\right\},
\]
and let $\psi: \mathcal{A}_k \to \R$ defined as in \eqref{def di psi} (we point out that now $t_0$ is not fixed). There exists a maximizer $(\bar t_0,\bar t_1\dots, \bar t_k)$ for $\psi$. Since $p$ is $T$-periodic, we can assume $\bar t_0 \in [0,T)$. As a consequence, it results $\nabla \psi (\bar t_0,\bar t_1,\dots, \bar t_k) = 0$. The expression of the partial derivatives of $\psi$ with respect to $t_i$, $i=1,\dots,k$, says that the function $u_{(\bar t_0,\bar t_0+nT),k}$ (defined as in \eqref{sol incollata}) is a solution of \eqref{equation} in $(\bar t_0, \bar t_0+nT)$; also, the fact that $\pa_{t_0} \psi (\bar t_0,\bar t_1,\dots, \bar t_k)=0$ implies that
\[
- \frac{1}{2} \dot{u}_{(\bar t_0,\bar t_0+nT),k}^2 (\bar t_0^+) + \frac{1}{2} \dot{u}_{(\bar t_0,\bar t_0+nT),k}^2((\bar t_0+nT)^-) = 0,
\]
that is, $u_{(\bar t_0,\bar t_0+nT),k}$ can be extended by $nT$-periodicity as a (smooth) solution of \eqref{equation} in the whole $\R$.
\end{proof}

\subsection*{Acknowledgments}
We would like to thank professor Rafael Ortega for having suggested the problem and for all the kind and fruitful
discussions. Work partially supported by the PRIN2009 grant ``Critical Point Theory and Perturbative Methods for Nonlinear Differential Equations''.


\noindent\verb"n.soave@campus.unimib.it"\\
Dipartimento di Matematica e Applicazioni, Universit\`a degli Studi
di Milano-Bicocca, via Bicocca degli Arcimboldi 8, 20126 Milano,
Italy\bigskip

\noindent \verb"gianmaria.verzini@polimi.it"\\
Dipartimento di Matematica, Politecnico di Milano, p.za Leonardo da
Vinci 32,  20133 Milano, Italy

\end{document}